\newcommand{\T}{\mathbb{T}}
\newcommand{\C}{\mathbb{C}}
\newcommand{\D}{\mathbb{D}}
\newcommand{\HP}{\mathcal{P}}
\newcommand{\ran}{\mathrm{ran \ }}
\newcommand{\HM}{\mathcal{M}}
\newcommand{\om}{\omega}
\newcommand{\La}{\langle}
\newcommand{\Ra}{\rangle}
\theoremstyle{proclaim}
\newtheorem{theorem}{Theorem}[section]
\newtheorem{lemma}[theorem]{Lemma}
\newtheorem{corollary}[theorem]{Corollary}
\theoremstyle{fancyproclaim}
\theoremstyle{statement}
\newtheorem{definition}[theorem]{Definition}
\theoremstyle{fancystatement}
\numberwithin{equation}{section}
\providecommand{\AMS}{$\mathcal{A}$\kern-.1667em%
\lower.25em\hbox{$\mathcal{M}$}\kern-.125em$\mathcal{S}$}
\begin{document}
\date{Month dd, yyyy}
\revision{Month dd, yyyy}
\title[Higher order isometric shift operator]{Higher order isometric shift operator on the de Branges-Rovnyak space}
\author[Caixing Gu, Shuaibing Luo]{Caixing Gu, Shuaibing Luo}
\address{Caixing Gu, Department of Mathematics, California Polytechnic State University, San Luis Obispo, CA 93407, USA}
\email{cgu@calpoly.edu}
\address{Shuaibing Luo, School of Mathematics, and Hunan Provincial Key Laboratory of Intelligent information processing and Applied Mathematics, Hunan University, Changsha, 410082, PR China}
\email{sluo@hnu.edu.cn}
\begin{abstract} The de Branges-Rovnyak space $H(b)$ is generated by a bounded analytic function $b$ in the unit ball of $H^\infty$. When $b$ is a nonextreme point, the space $H(b)$ is invariant by the forward shift operator $M_z$. We show that the $H(b)$ spaces provide model spaces for expansive quasi-analytic $2n$-isometric operators $T$ with $T^*T - I$ being rank one. Then we describe the invariant subspaces of the $2n$-isometric forward shift operator $M_z$ on $H(b)$.
\end{abstract}
\begin{subjclass}
46E22, 47A15, 47A45. 
\end{subjclass}
\begin{keywords}
de Branges-Rovnyak space; $m$-isometry; operator model; shift invariant subspace.
\end{keywords}
\maketitle

\section*{INTRODUCTION}

Let $\mathbb{D}$ be the open unit disc in the complex plane $\mathbb{C}$, and $\mathbb{T}$ the unit circle. Let $H^{2}$ be the Hardy space on $\mathbb{D}$, and $H^{\infty}$ the space of bounded analytic functions on $\mathbb{D}$ with norm
$$\|f\|_{\infty} = \sup_{z\in \mathbb{D}} |f(z)|.$$
If $P$ is the orthogonal projection from $L^{2}(\mathbb{T})$ onto $H^{2}$, then for $\phi \in L^\infty(\mathbb{T})$, the Toeplitz operator $T_{\phi}: H^{2} \rightarrow H^{2}$ is defined by
$$T_{\phi} f = P(\phi f), \quad f \in H^{2}.$$
For $b$ in the unit ball of $H^{\infty}$, the de Branges-Rovnyak space $H(b)$ is the image of $H^{2}$ under the operator $(I-T_{b} T_{\overline{b}})^{1/2}$ with the inner product
$$\langle (I-T_{b} T_{\overline{b}})^{1/2} f, (I-T_{b} T_{\overline{b}})^{1/2} g\rangle = \langle f, g\rangle_{H^{2}}, \quad f, g \in [\ker(I-T_{b} T_{\overline{b}})^{1/2}]^{\perp}.$$
It is known that $H(b)$ space is a reproducing kernel Hilbert space with kernel
$$K_{\lambda}^{b}(z) = \frac{1-\overline{b(\lambda)}b(z)}{1-\overline{\lambda}z}, \quad \lambda, z \in \mathbb{D}.$$
So $\langle f, K_{\lambda}^{b}\rangle = f(\lambda)$ for all $f \in H(b)$ and $\lambda \in \mathbb{D}$. $H(b)$ spaces not only possess a beautiful structure, but also play an important role in many aspects of complex analysis and operator theory, most importantly, in the model theory for certain types of contractions, see the books of de Branges and Rovnyak \cite{dBR66}, Sarason \cite{Sa94} and the recent monographs of Fricain and Mashreghi \cite{FM16}.

Many properties of $H(b)$ depend on whether $b$ is or is not an extreme point of the unit ball of $H^{\infty}$. If $M_z$ is the forward shift operator on $H^{2}$, then $H(b)$ is invariant by $M_z$ if and only if $b$ is a nonextreme point of the unit ball of $H^{\infty}$ (\cite[P23]{Sa94}). Forward shift invariant subspaces are the natural subject to study, which yields profound research in operator theory, e.g. Beurling's theorem for the Hardy space (\cite{Be48}), Aleman-Richter-Sundberg's theorem for the Bergman space (\cite{ARS96}), Richter-Sundberg's theorems for the Dirichlet space (\cite{Ri88,RS92}). Despite the ongoing and active research on $H(b)$ spaces, we know little about the forward shift invariant subspaces of $H(b)$. In fact, the situation for $H(b)$ is difficult (\cite[Chap24, P352]{FM16}). In 2019, Aleman and Malman \cite{AlemanMalman} made great progress on this problem. They studied the de Branges-Rovnyak spaces $H(B)$ generated by Schur class functions $B$, and characterized the $M_z$-invariant subspace of $H(B)$ when $H(B)$ is of finite rank. In this paper, when $B = b$ is a rational function, we give another characterization of the $M_z$-invariant of $H(b)$, which provides some additional information of the structure of the $M_z$-invariant subspaces of $H(b)$.

In order to better understand operators on a Hilbert space, one may try to find models for certain classes, that is, a subclass of concrete operators with
the property that any given operator from the class is unitarily equivalent to an element of the subclass. It is known that the vector-valued de Branges-Rovnyak spaces $H(B)$ provide canonical model spaces for certain types of contractions. It is amazing that $H(B)$ spaces also provide model spaces for expansive analytic operators, see \cite[Proposition 2.1]{AlemanMalman}, also see \cite[Theorem 4.6]{LGR} for a detailed account. In this paper, we will show in detail that the scalar-valued de Branges-Rovnyak spaces $H(b)$ provide model spaces for what we call the expansive quasi-analytic operators $A$ with $A^*A -I$ being rank one, see definition \ref{quasianalytic} for the meaning of quasi-analytic operators.

The main results of this paper are the following two theorems.

\begin{theorem}\label{mainopmodel1}
Every quasi-analytic strict $2n$-isometry $A$ with $A^*A - I = w \otimes w$ is unitarily equivalent to $(M_z,H(b))$ for some rational function $b$ of degree $n$ with $b(0) = 0$.
\end{theorem}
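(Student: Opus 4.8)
The plan is to build the functional model of $A$ explicitly via its Cauchy dual, identify the model space with an $H(b)$, and then extract rationality of $b$ from the $2n$-isometry identity. \emph{Step 1: preliminary structure.} From $A^*A-I=w\otimes w\ge 0$ we get that $A$ is expansive, hence bounded below, so $A$ is injective with closed range and $A^*$ is onto. Since $A$ is a \emph{strict} $2n$-isometry it cannot be invertible — an invertible $m$-isometry would make $k\mapsto\|A^kx\|^2$ a nonnegative polynomial on all of $\mathbb{Z}$, forcing even degree, whereas strictness produces an $x$ with $\deg=2n-1$ — so $\ker A^*\ne\{0\}$. I will use the quasi-analyticity hypothesis (Definition \ref{quasianalytic}) in the form: $\dim\ker A^*=1$, say $\ker A^*=\C w_0$ with $\|w_0\|=1$, and $w_0$ is cyclic for the Cauchy dual $S:=A(A^*A)^{-1}$. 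Two facts are then immediate: $S^*S=(A^*A)^{-1}=I-(1+\|w\|^2)^{-1}\,w\otimes w\le I$, so $S$ is a contraction; and $A^*S=I$, so $A^*S^kw_0=S^{k-1}w_0$ for $k\ge1$ while $A^*w_0=0$, and $\ran S=\ran A=(\C w_0)^\perp$.

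\emph{Step 2: the model map.} Define $\Phi:\mathcal H\to\mathrm{Hol}(\D)$ by
\[
(\Phi x)(\lambda)=\La x,(I-\bar\lambda S)^{-1}w_0\Ra=\sum_{k\ge0}\La x,S^kw_0\Ra\lambda^k,
\]
which converges on $\D$ because $\|S^kw_0\|\le\|w_0\|=1$. Cyclicity of $w_0$ makes $\Phi$ injective; equip $\mathcal M:=\Phi\mathcal H$ with the transported norm, so $\Phi$ is a Hilbert-space isomorphism and $\mathcal M$ is a reproducing kernel Hilbert space with kernel $k_{\mathcal M}(\lambda,\mu)=\La(I-\bar\mu S)^{-1}w_0,(I-\bar\lambda S)^{-1}w_0\Ra$. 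Using $A^*S^kw_0=S^{k-1}w_0$ one checks $\Phi(Ax)=\lambda\,\Phi x$, i.e.\ $\Phi A=M_z\Phi$; a parallel computation gives $\Phi S^*=X\Phi$, where $(Xf)(z)=(f(z)-f(0))/z$ is the difference quotient on $\mathcal M$.

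\emph{Step 3: $\mathcal M$ is a de Branges--Rovnyak space.} Since $w_0\otimes w_0=P_{\ker S^*}\le I-SS^*$ and $\sum_{k=0}^{N}S^k(I-SS^*)S^{*k}=I-S^{N+1}S^{*(N+1)}\le I$, summing gives $\sum_{k\ge0}(S^kw_0)\otimes(S^kw_0)\le I$, i.e.\ $\|\Phi x\|_{H^2}\le\|x\|_{\mathcal H}=\|\Phi x\|_{\mathcal M}$; thus $\mathcal M$ is contained contractively in $H^2$. Moreover, for $f=\Phi x$ one has $\|Xf\|_{\mathcal M}^2=\|S^*x\|^2$ and $\|f\|_{\mathcal M}^2-|f(0)|^2=\|x\|^2-|\La x,w_0\Ra|^2=\|P_{(\C w_0)^\perp}x\|^2$, and since $S^*x=S^*P_{(\C w_0)^\perp}x$ with $\|S^*\|\le1$ this yields $\|Xf\|_{\mathcal M}^2\le\|f\|_{\mathcal M}^2-|f(0)|^2$. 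By the characterization of de Branges--Rovnyak spaces (\cite{Sa94,FM16}), $\mathcal M=H(b)$ for some $b$ in the closed unit ball of $H^\infty$. The relation $z\mathcal M\subseteq\mathcal M$ (from $\Phi A=M_z\Phi$) shows $b$ is nonextreme, and $k_{\mathcal M}(\lambda,0)=\La w_0,w_0\Ra=1$ (because $S^kw_0\perp w_0$ for $k\ge1$) forces $K_0^b\equiv1$, hence $b(0)=0$ — here $b\not\equiv0$ since a strict $2n$-isometry is not an isometry. So $\Phi$ realizes $A\cong(M_z,H(b))$ with $b$ nonextreme and $b(0)=0$.

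\emph{Step 4: rationality and exact degree — the main obstacle.} Since $\Phi$ intertwines, $(M_z,H(b))$ is a strict $2n$-isometry, so $m\mapsto\|z^mf\|_{H(b)}^2$ is a polynomial in $m$ of degree $\le 2n-1$ for every $f$, of degree exactly $2n-1$ for some $f$. I would now invoke Sarason's decomposition $\|f\|_{H(b)}^2=\|f\|_{H^2}^2+\|f^+\|_{H^2}^2$, where $f^+$ is the conjugate function attached to the pythagorean mate $a$ of $b$ (with $|a|^2+|b|^2=1$ a.e.\ on $\T$, $a$ outer, $a(0)>0$), together with the recursion expressing $(zf)^+$ in terms of $f^+$ and $f(0)$. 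Because $M_z$ is isometric on $H^2$, the growth in $m$ of $\|z^mf\|_{H(b)}^2$ is carried entirely by $\|(z^mf)^+\|_{H^2}^2$, so the $2n$-isometry identity becomes a finite-rank condition on the map $f\mapsto f^+$ (equivalently on the Toeplitz operators $T_{\bar b/\bar a}$, $T_{b/a}$), which holds exactly when $b/a$, hence $b$, is rational; comparing the leading coefficients of the degree-$(2n-1)$ polynomials then pins the degree of $b$ at $n$. This passage from the $m$-isometry identity to rationality and the exact degree, via the mate function, is the crux of the argument; by contrast Steps 1--3 are comparatively soft, their only delicate inputs being the quasi-analyticity-driven one-dimensionality and cyclicity of $\ker A^*$ and the precise form of the de Branges--Rovnyak characterization theorem invoked.
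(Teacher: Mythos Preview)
Your approach differs substantially from the paper's. The paper proceeds by an inductive rank-one extension: Lemma~\ref{decomsftnis} and the matrix form~(\ref{matrixT}) show that any quasi-analytic strict $2n$-isometry with rank-one defect is reached from the unilateral shift by $n$ successive one-dimensional extensions, and at each stage an explicit Herglotz-type formula~(\ref{btdebbzas}) produces a rational $b_t$ of degree one higher with $T_{j+1}\cong(M_z,H(b_t))$ (Theorem~\ref{unitequirt}). This is constructive and never needs to know in advance for which $b$ the operator $(M_z,H(b))$ is a $2n$-isometry. You instead build the model in one stroke via the Cauchy dual and then read off the degree of $b$ from the $2n$-isometry identity on $H(b)$; the paper itself remarks that this alternative is available through \cite[Theorems~1.1, 1.3, 4.6]{LGR}, so the overall strategy is legitimate.

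There is, however, a real gap in Step~1. You assume that Definition~\ref{quasianalytic} delivers the cyclicity of $w_0\in\ker A^*$ for the Cauchy dual $S$, which is exactly what makes $\Phi$ injective. But the definition says only that $A|_{H_w^\perp}$ is the unilateral shift; it does not directly give analyticity of $A$ on all of $H$, and your cyclicity claim is at least as strong. The paper is explicit about this point (see the paragraph following Definition~\ref{quasianalytic}): analyticity of a quasi-analytic $2n$-isometry with rank-one defect ``is not obvious'' and is obtained \emph{as a consequence} of the model theorem, not as an input to it. Your argument inverts that logical order without supplying the missing step. The one-dimensionality of $\ker A^*$ does follow from Lemma~\ref{kernel}, but cyclicity of $w_0$ for $S$ requires its own justification. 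Separately, Step~4 is only a sketch: the implication ``$(M_z,H(b))$ is a strict $2n$-isometry $\Rightarrow$ $b$ is rational of degree exactly $n$'' is precisely \cite[Theorem~1.1]{LGR}, a substantial result that your outline via the mate function and $f^+$ gestures at but does not establish.
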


When $b$ is a nonextreme point of the unit ball of $H^{\infty},$
then there exists a unique outer function $a$ such that $a(0)>0$ and
\begin{equation}
\left\vert a(z)\right\vert ^{2}+\left\vert b(z)\right\vert ^{2}=1,z\in
\mathbb{T}.\label{defa}%
\end{equation}
We call $a$ the pythagorean mate of $b$.
\begin{theorem}\label{invsinggtn1}
Let $T=(M_z,H(b))$ be a strict $2n$-isometry on $H(b)$. Then the pythagorean mate $a$ of $b$ has a single zero of multiplicity $n$ at some point $\overline{\lambda} \in \T$, and every nonzero $T$-invariant subspace is of the form $[(z-\overline{\lambda})^{j}\theta]_T$, where $j \in \{0,1,\cdots,n\}$, $\theta$ an inner function are such that $(z-\overline{\lambda})^{j}\theta \in H(b)$.
\end{theorem}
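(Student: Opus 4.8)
The structure of this theorem has two parts: first, identify the zero structure of the pythagorean mate $a$; second, classify all $M_z$-invariant subspaces. Let me think about how to approach each.

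For the first part, $T = (M_z, H(b))$ is a strict $2n$-isometry. By Theorem \ref{mainopmodel1}'s machinery (the operator model correspondence), the degree of $b$ as a rational function is $n$. Since $b$ is nonextreme, it has a pythagorean mate $a$ with $|a|^2 + |b|^2 = 1$ on $\mathbb{T}$. Now $b$ is rational of degree $n$ and must be a contraction on $\mathbb{D}$; nonextreme means $\log(1-|b|^2) \in L^1(\mathbb{T})$. The key algebraic fact: if $b = p/q$ with $\deg q = n$ (and $q$ having no zeros in $\overline{\mathbb{D}}$ after normalization), then $|a(z)|^2 = 1 - |b(z)|^2 = (|q|^2 - |p|^2)/|q|^2$ on $\mathbb{T}$. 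For $a$ to be outer (hence zero-free in $\mathbb{D}$) while $|q|^2 - |p|^2$ is a trigonometric polynomial of degree $n$, its zeros on $\mathbb{T}$ must account for the "defect" — and the strict $2n$-isometry condition (as opposed to a strict $2k$-isometry for $k < n$) forces all $n$ zeros to coalesce at a single point $\overline{\lambda} \in \mathbb{T}$. I would make this precise by relating the order of the zero of $a$ at $\overline{\lambda}$ to the isometry order: a zero of multiplicity $j$ contributes a strict $2j$-isometry locally, and the global order $2n$ with total zero count $n$ forces one zero of full multiplicity $n$. This likely uses a factorization $a(z) = c(z - \overline{\lambda})^n / q(z)$ up to unimodular constant, combined with the boundary behavior analysis of $H(b)$.

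For the second part — classifying the $T$-invariant subspaces — I would use the fact that when $a$ has its single zero of order $n$ at $\overline{\lambda}$, the space $H(b)$ contains $H^2$-like structure near that boundary point and decomposes the multiplier algebra accordingly. The candidate invariant subspaces $[(z - \overline{\lambda})^j \theta]_T$ with $\theta$ inner and $0 \le j \le n$ should be shown to be the only ones by: (a) verifying each such set is genuinely $M_z$-invariant and closed in $H(b)$ (membership $(z-\overline{\lambda})^j\theta \in H(b)$ is the constraint), and (b) given an arbitrary nonzero invariant subspace $\mathcal{M}$, extracting an inner function via the "wandering" structure. Since $M_z$ on $H(b)$ here is close to an isometry (it is a $2n$-isometry with rank-one defect), $\mathcal{M}$ should be generated by a single function $f$, and one analyzes the inner-outer-type factorization of $f$ inside $H(b)$; the outer part is forced to be a power of $(z - \overline{\lambda})$ because that is the only way outer functions with the right boundary vanishing live in $H(b)$, while the inner part is an arbitrary inner $\theta$. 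I would lean on Aleman–Malman's characterization (\cite{AlemanMalman}) of finite-rank $H(b)$ invariant subspaces as the backbone and then specialize it using the rigid zero structure of $a$ established in part one.

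\textbf{Main obstacle.} The hardest step is showing the zeros of $a$ must \emph{coalesce} at a single point rather than being spread among up to $n$ distinct boundary points. Proving that "strict $2n$-isometry" (and not merely "$2n$-isometry" or a lower even order) is equivalent to the single-zero-of-full-multiplicity configuration requires a careful computation of the operator $\sum_{k=0}^{2n} (-1)^k \binom{2n}{k} T^{*k} T^k$ on $H(b)$ and tracking how each boundary zero of $a$ contributes to the degree of the annihilating polynomial — essentially a local-to-global principle for the isometry order in terms of the divisor of $a$ on $\mathbb{T}$. Once that rigidity is in hand, both the pythagorean-mate statement and the reduction of the invariant-subspace problem to the classical Beurling-type argument near a single boundary point should follow with standard de Branges–Rovnyak space techniques.
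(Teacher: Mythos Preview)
Your plan diverges from the paper in both parts, and the second part has a genuine gap.

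\textbf{Part 1 (zero structure of $a$).} You identify the coalescence of the zeros of $a$ as the main obstacle and propose to compute $\beta_{2n}(T)$ directly. The paper does not prove this at all: it simply cites \cite[Theorem 1.1]{LGR}, which states that $T=(M_z,H(b))$ is a strict $2n$-isometry if and only if $b$ is rational of degree $n$ and the mate $a$ has a single zero of multiplicity $n$ on $\T$. So the obstacle you flag is real but already handled in the literature; within this paper it is a one-line citation, not an argument.

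\textbf{Part 2 (invariant subspaces).} The paper's route is entirely different from yours. It never invokes Aleman--Malman or wandering subspaces. Instead it reduces (via $H(b)=M(\overline{a})$ with equivalent norms) to the concrete case $a(z)=(1-\lambda z)^n/2^n$, uses the orthogonal decomposition $H(b)=M(a)\oplus\mathcal{P}_{n-1}$ with $M(a)=aH^2$ isometric to $H^2$, and observes that there are exactly $n$ $T$-invariant subspaces strictly between $M(a)$ and $H(b)$, namely $M(a)\oplus\mathcal{L}_j$ for $0\le j\le n-1$. The classification then follows by a three-case analysis on a nonzero invariant subspace $\mathcal{N}$: if $\mathcal{N}\subseteq M(a)$, apply Beurling via the unitary $T_a$; if not and the gcd of inner factors of $\mathcal{N}$ is $1$, show $M(a)\subseteq\mathcal{N}$ and read off which $\mathcal{L}_j$ occurs; otherwise factor out the inner gcd $\theta$ using that $T_{\overline{\theta}}$ is contractive on $H(b)$ and reduce to the previous case. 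Your proposed argument has a gap at the step ``the outer part is forced to be a power of $(z-\overline{\lambda})$'': outer functions in $H(b)$ are not polynomials, so this is not literally true. What you actually need is that an outer $F\in H(b)$ with $F(\overline{\lambda})\neq 0$ is cyclic, so that $[\theta F]=[\theta (z-\overline{\lambda})^j]$ where $j$ is the order of vanishing of $F$ at $\overline{\lambda}$. But in the paper this cyclicity statement (Corollary \ref{cyclicityhb}) is a \emph{consequence} of the invariant-subspace classification, not an input to it. Your route therefore needs an independent proof of cyclicity, which you have not supplied; without it the argument is circular.
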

When we say $T=(M_z,H(b))$ is a $2n$-isometry on $H(b)$ we always mean that $M_z$ is a bounded operator on $H(b)$, and this happens if and only if $b$ is a nonextreme point of the unit ball of $H^\infty$.
In section 2, we introduce the background of $m$-isometries, and then prove Theorem \ref{mainopmodel1} using the rank one extension process. In section 3, we present the proof of Theorem \ref{invsinggtn1}.

\section{Operator model}

\subsection{M-isometry on de Branges-Rovnyak space}
A bounded linear operator $T$ on a Hilbert space $H$ is an
$m$-isometry for some positive integer $m$ if
\[
\beta_{m}(T):=\sum\limits_{k=0}^{m}(-1)^{m-k}\binom{m}{k}y^{k}x^{k}|_{y=T^{\ast},x=T}%
=\sum\limits_{k=0}^{m}(-1)^{m-k}\binom{m}{k}T^{\ast k}T^{k}=0.
\]
Note that
\[
\beta_{m+1}(T)=T^{\ast}\beta_{m}(T)T-\beta_{m}(T),
\]
So if $T$ is an $m$-isometry, then $T$ is an $n$-isometry for all $n\geq m.$
We say $T$ is a strict $m$-isometry if $T$ is an $m$-isometry but $T$ is not
an $(m-1)$-isometry.

The study of $m$-isometries originates from the work of Agler \cite{Ag90}, followed by a series of papers by Agler and Stankus \cite{AS956}. In 1991, Richter \cite{Ri91} investigated the cyclic analytic $2$-isometries and proved that any cyclic analytic $2$-isometry is unitarily equivalent to the shift operator $M_z$ on a Dirichlet type space $D(\mu)$ for some measure $\mu$ supported on the unit circle, where
$$D(\mu) = \left\{f \in \text{H}(\mathbb{D}): \int_{\mathbb{D}} |f'(z)|^{2} \int_{\mathbb{T}} \frac{1-|z|^{2}}{|z-\zeta|^{2}}d\mu(\zeta) dA(z) < \infty\right\}.$$
In 2014 and 2015, the first author Gu demonstrated some examples of $m$-isometric operators in \cite{Gu14, Gu15} and further studied the properties of $m$-isometries in \cite{Gu15b}. In 2018, the authors Gu and Luo \cite{GuLuo18} showed that for any positive integer $m$, the shift operator $M_z$ on $K_{m}$ is an $(m+2)$-isometry, where $K_m$ is the following space
\[
\left\{  f\in \text{H}({\mathbb{D)}}:\left\Vert
f\right\Vert ^{2}={\sum_{n\geq0}}\frac{\left(  n+1\right)
\cdots(n+m+1)}{(m+1)!}\left\vert a_{n}\right\vert ^{2}<\infty, f(z) = \sum_{n=0}^\infty a_n z^n\right\}  .
\]

In this paper, we are interested in the expansive $m$-isometry $T$ with $T^*T - I = w \otimes w$. The following result is proved in \cite[Corollaries 8.6 \text{ and } 8.11]{LGR}.
\begin{theorem}\label{thm1}
Assume $T$ is a bounded linear operator on a Hilbert space $H$ with $\Delta = T^{\ast}T-I = w\otimes w.$
\newline(i) The
operator $T$ is a $2n$-isometry if and only if there exists $\lambda
\in\mathbb{T}$ such that $\left(  T^{\ast}-\lambda\right)  ^{n}w$ $=0.$%
\newline(ii) If $\left(  T^{\ast}-\lambda\right)  ^{n}w=0$ and $\left(
T^{\ast}-\lambda\right)  ^{n-1}w\neq0,$ then $T$ is a strict $2n$%
-isometry.\newline(iii) The operator $T$ is a $(2n+1)$-isometry if and only if
$T$ is a $2n$-isometry.
\end{theorem}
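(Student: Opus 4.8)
The plan is to reduce everything to the behaviour of the scalar sequence $k\mapsto\|T^kx\|^2$ and of the orbit vectors $w_k:=T^{\ast k}w$. First I would record the closed form
\[
\beta_m(T)=\sum_{j=0}^{m-1}(-1)^{m-1-j}\binom{m-1}{j}\,(T^{\ast j}w)\otimes(T^{\ast j}w),
\]
obtained by iterating $\beta_{m+1}(T)=T^\ast\beta_m(T)T-\beta_m(T)$ from $\beta_1(T)=\Delta=w\otimes w$, using $T^\ast(u\otimes v)T=(T^\ast u)\otimes(T^\ast v)$ and expanding $(R-I)^{m-1}$ with $R(X)=T^\ast XT$. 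Next I would invoke the classical fact that $T$ is an $m$-isometry if and only if $k\mapsto\|T^kx\|^2$ is a polynomial of degree $\le m-1$ for every $x$ (since $\langle\beta_m(T)x,x\rangle$ is the $m$-th forward difference of this sequence at $0$), together with the first-difference identity $\|T^{k+1}x\|^2-\|T^kx\|^2=\langle\Delta T^kx,T^kx\rangle=|\langle x,w_k\rangle|^2$. Summing shows that $T$ is a $2n$-isometry if and only if $|\langle x,w_k\rangle|^2$ is a polynomial in $k$ of degree $\le 2n-2$ for every $x$.

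The ``if'' directions of (i) and (ii) then follow quickly. Assuming $(T^\ast-\lambda)^nw=0$ with $\lambda\in\T$, expanding $T^{\ast k}=(\lambda+(T^\ast-\lambda))^k$ gives $w_k=\lambda^kp(k)$ with $p(k)=\sum_{i=0}^{n-1}\binom{k}{i}\lambda^{-i}(T^\ast-\lambda)^iw$ a vector polynomial of degree $\le n-1$. As $|\lambda|=1$, $|\langle x,w_k\rangle|^2=|\langle x,p(k)\rangle|^2$ has degree $\le 2n-2$, so $\|T^kx\|^2$ has degree $\le 2n-1$ and $T$ is a $2n$-isometry. For (ii), when $(T^\ast-\lambda)^{n-1}w\neq0$ the leading coefficient of $p$ survives, so for any $x$ with $\langle x,(T^\ast-\lambda)^{n-1}w\rangle\neq0$ the degree is exactly $2n-1$; its $(2n-1)$-st difference is nonzero, whence $\langle\beta_{2n-1}(T)x,x\rangle\neq0$ and $T$ is strict.

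The ``only if'' direction of (i) is the crux. From $\beta_{2n}(T)=0$, applying $T^{\ast l}(\cdot)T^l$ gives $\sum_{j=0}^{2n-1}(-1)^{2n-1-j}\binom{2n-1}{j}\,w_{j+l}\otimes w_{j+l}=0$ for all $l\ge0$; thus the operator sequence $k\mapsto w_k\otimes w_k$ has vanishing $(2n-1)$-st finite difference and so is a polynomial in $k$ of degree $\le 2n-2$ whose coefficients lie in $\operatorname{span}\{w_0\otimes w_0,\dots,w_{2n-2}\otimes w_{2n-2}\}$. Hence the range of each $w_k\otimes w_k$, namely $\operatorname{span}\{w_k\}$, lies in the finite-dimensional space $W:=\operatorname{span}\{w_0,\dots,w_{2n-2}\}$, which is $T^\ast$-invariant with $w$ cyclic for $S:=T^\ast|_W$. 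Writing the Jordan decomposition $S^kw=\sum_i\mu_i^kp_i(k)$ (distinct eigenvalues $\mu_i$, nonzero vector polynomials $p_i$ by cyclicity) gives $w_k\otimes w_k=\sum_{i,j}(\mu_i\overline{\mu_j})^k\,p_i(k)\otimes p_j(k)$; since the left side is a polynomial, linear independence of the functions $k\mapsto\nu^kk^r$ forces every group with $\mu_i\overline{\mu_j}\neq1$ to cancel. Examining the groups $\nu=(\max_i|\mu_i|)^2$ and $\nu=(\min_i|\mu_i|)^2$, each of which reduces to diagonal pairs, and using positivity of $p_i(k)\otimes p_i(k)$, forces all $|\mu_i|=1$; then the surviving group is exactly the diagonal, so $w_k\otimes w_k=\sum_ip_i(k)\otimes p_i(k)$. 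Because this has rank $\le1$ while the $p_i(k)$ lie in independent eigenspaces and vanish only finitely often, there is a single eigenvalue $\lambda\in\T$; then $w_k\otimes w_k=p(k)\otimes p(k)$ has degree $2\deg p\le 2n-2$, so $\deg p\le n-1$, the single Jordan block has size $\le n$, and $(T^\ast-\lambda)^nw=0$.

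Finally, for (iii): that a $2n$-isometry is a $(2n+1)$-isometry is the elementary implication recorded after the definition of $\beta_m(T)$. Conversely a $(2n+1)$-isometry is a $(2n+2)$-isometry, so by (i) there is $\lambda\in\T$ with $(T^\ast-\lambda)^{n+1}w=0$, giving $w_k=\lambda^kp(k)$ with $\deg p\le n$; hence every $|\langle x,w_k\rangle|^2=|\langle x,p(k)\rangle|^2$ has even degree. If $T$ failed to be a $2n$-isometry, some $\|T^kx\|^2$ would have degree exactly $2n$, making its first difference $|\langle x,w_k\rangle|^2$ of odd degree $2n-1$, a contradiction; so $T$ is a $2n$-isometry. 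I expect the main obstacle to be the finite-dimensionalisation step in the third paragraph---converting $\beta_{2n}(T)=0$ into genuine control of the orbit $\{w_k\}$---and the ensuing extraction of a single unimodular eigenvalue of the correct Jordan order from the rank-one and polynomial-growth constraints.
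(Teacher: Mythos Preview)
The paper does not actually prove this theorem: immediately before the statement it says ``The following result is proved in \cite[Corollaries 8.6 and 8.11]{LGR}'' and gives no argument of its own. So there is nothing in the present paper to compare your proof against; your proposal is a genuine, self-contained proof where the paper merely cites one.

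As for correctness, your argument is sound. The closed form $\beta_m(T)=(R-I)^{m-1}(w\otimes w)$ with $R(X)=T^\ast XT$ is right and yields the displayed sum; together with the standard polynomial-norm characterization of $m$-isometries and the first-difference identity $\|T^{k+1}x\|^2-\|T^kx\|^2=|\langle x,w_k\rangle|^2$ it reduces (i)--(iii) to controlling the orbit $w_k=T^{\ast k}w$. The ``if'' parts of (i) and (ii) via $w_k=\lambda^kp(k)$ are routine. Your ``only if'' step is the interesting one and works: from $\beta_{2n}(T)=0$ the $(2n-1)$-st differences of $k\mapsto w_k\otimes w_k$ vanish, Lagrange interpolation puts every $w_k$ into $W=\operatorname{span}\{w_0,\dots,w_{2n-2}\}$, and then you analyze $S=T^\ast|_W$ with cyclic vector $w$. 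Grouping $w_k\otimes w_k=\sum_{i,j}(\mu_i\overline{\mu_j})^k\,p_i(k)\otimes p_j(k)$ by $\nu=\mu_i\overline{\mu_j}$ and invoking linear independence of $\{\nu^kk^r\}$ kills the $\nu\neq1$ groups; the extremal choices $\nu=(\max|\mu_i|)^2$ and $\nu=(\min|\mu_i|)^2$ consist only of diagonal (hence positive) terms, forcing all $|\mu_i|=1$; the surviving diagonal sum $\sum_ip_i(k)\otimes p_i(k)$ has rank $\le1$ while the $p_i(k)$ lie in independent generalized eigenspaces and are generically nonzero, forcing a single eigenvalue $\lambda\in\mathbb{T}$; finally $2\deg p\le 2n-2$ gives $(T^\ast-\lambda)^nw=0$. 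The parity argument for (iii) is also correct: $|\langle x,p(k)\rangle|^2$ always has even degree, which rules out the would-be degree $2n-1$ for the first difference.

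In short: your proof is correct and more than the paper itself provides. The finite-dimensionalisation step you flagged as the likely obstacle is handled cleanly by the interpolation/range-inclusion observation, and the extraction of a single unimodular eigenvalue from the rank-one constraint is watertight.
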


Let $M_{z}$ be the forward shift operator on the Hardy space $H^2$, $M_zf(z)=zf(z)$, and $L$ be the backward shift operator,
\[
Lf(z)=\frac{f(z)-f(0)}{z}.
\]
Let $b$ be a nonextreme function in the unit ball of $H^{\infty}$. Then the de Branges-Rovnyak space $H(b)$ is invariant for both $M_z$ and
$L$ (\cite{FM16, Sa94}). Let $T = (M_z, H(b))$. Both $T$ and $L$ are bounded operators from $H(b)$ into $H(b)$. Note that $LT=I$ on $H(b)$, and $T$ is expansive and $L$ is contractive. We have
\[
L^{\ast}=T-b\otimes Lb,
\]
see e.g. \cite[Theorem 18.22]{FM16}. Therefore, $T=L^{\ast}+b\otimes Lb$ and $T^{\ast}=L+Lb\otimes b$.

\begin{lemma}\label{tstmiro}
Let $L$ and $T$ be defined as above. Then%
\[
T^{\ast}T-I=(1+\left\Vert b\right\Vert ^{2})Lb\otimes Lb.
\]

\end{lemma}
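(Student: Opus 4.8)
The plan is to obtain the identity by a direct computation from the factorizations already recorded above, namely $T = L^{\ast} + b\otimes Lb$, equivalently $T^{\ast} = L + Lb\otimes b$, together with the relation $LT = I$ on $H(b)$. The only auxiliary facts needed are the elementary rank-one calculus rules: for a bounded operator $A$ on $H(b)$ one has $A(x\otimes y) = (Ax)\otimes y$ and $(x\otimes y)A = x\otimes(A^{\ast}y)$, while for a composition of rank-one operators $(x\otimes y)(u\otimes v) = \langle u,y\rangle\, x\otimes v$, the inner product being that of $H(b)$. (Here $b\in H(b)$ and $H(b)$ is $L$-invariant, so $Lb\in H(b)$ and all the displayed rank-one operators genuinely act on $H(b)$; this is also what makes $L^{\ast} = T - b\otimes Lb$ a valid operator identity on $H(b)$ in the first place.)

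First I would compute $LL^{\ast}$. Writing $L^{\ast} = T - b\otimes Lb$ and using $LT = I$ together with $L(b\otimes Lb) = (Lb)\otimes Lb$, one gets
\[
LL^{\ast} = LT - L(b\otimes Lb) = I - Lb\otimes Lb .
\]
Next I would expand, using $T = L^{\ast} + b\otimes Lb$ and $T^{\ast} = L + Lb\otimes b$,
\[
T^{\ast}T = (L + Lb\otimes b)(L^{\ast} + b\otimes Lb)
= LL^{\ast} + L(b\otimes Lb) + (Lb\otimes b)L^{\ast} + (Lb\otimes b)(b\otimes Lb).
\]
Each of the three trailing terms is a scalar multiple of $Lb\otimes Lb$: indeed $L(b\otimes Lb) = Lb\otimes Lb$, $(Lb\otimes b)L^{\ast} = Lb\otimes\big((L^{\ast})^{\ast}b\big) = Lb\otimes Lb$, and $(Lb\otimes b)(b\otimes Lb) = \langle b,b\rangle\, Lb\otimes Lb = \Vert b\Vert^{2}\, Lb\otimes Lb$. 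Substituting the value of $LL^{\ast}$ from the first step,
\[
T^{\ast}T = \big(I - Lb\otimes Lb\big) + Lb\otimes Lb + Lb\otimes Lb + \Vert b\Vert^{2}\, Lb\otimes Lb = I + (1+\Vert b\Vert^{2})\, Lb\otimes Lb,
\]
which is the asserted identity.

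I do not expect a genuine obstacle: the argument is a few lines of rank-one algebra once the factorizations of $T$ and $T^{\ast}$ and the identity $LT=I$ are in hand. The only points that call for care are the bookkeeping of adjoints in the rank-one rules (so that $(Lb\otimes b)L^{\ast}$ simplifies correctly) and the observation that the constant $\Vert b\Vert^{2}$ appearing in the statement is the $H(b)$-norm of $b$, arising precisely from the composition $(Lb\otimes b)(b\otimes Lb)$.
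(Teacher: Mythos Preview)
Your proof is correct and uses essentially the same ingredients as the paper's: the factorization $T^{\ast}=L+Lb\otimes b$ together with $LT=I$ and standard rank-one calculus. The paper's version is marginally shorter because it leaves $T$ unexpanded and writes $T^{\ast}T=(L+Lb\otimes b)T=I+Lb\otimes T^{\ast}b$, then evaluates $T^{\ast}b=(1+\|b\|^{2})Lb$; your detour through $LL^{\ast}=I-Lb\otimes Lb$ is unnecessary but harmless.
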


\begin{proof}
Note that%
\begin{align*}
T^{\ast}T  &  =\left(  L+Lb\otimes b\right)  T=LT+Lb\otimes T^{\ast}b\\
&  =I+Lb\otimes(Lb+\left\langle b,b\right\rangle Lb),
\end{align*}
where the inner product is in $H(b).$ The conclusion then follows.
\end{proof}

\subsection{Operator model}\label{operatormodel}
It is known that every cyclic analytic $2$-isometry $A$ on a Hilbert space $H$ is unitarily equivalent to the shift operator on some Dirichlet-type space $D(\mu)$, where $\mu$ is a positive measure on $\mathbb{T}$ (\cite{Al93, EKMR14, Ri91}). So the shift operator $M_z$ on $D(\mu)$ is an operator model for such operator $A$. In this section we will use the rank one extension process developed in \cite{Ri} to show that every quasi-analytic $2n$-isometry $A$ with $A^{\ast}A-I = w \otimes w$ is unitarily equivalent to the shift operator $M_z$ on $H(b)$ for some $b$.

If $T$ is an expansive operator, then we let $\Delta = T^*T - I$, and $[\ran \Delta]_{T^*}$ be the smallest $T^*$-invariant subspace containing the range of $\Delta$.
\begin{lemma}\label{decomsftnis}
Assume $T$ is a bounded linear operator on a Hilbert space $H$ with $\Delta = T^{\ast}T-I = w\otimes w$, and $T$ is a strict $2n$-isometry.
\newline(i) Let $H_{w}=\text{Span}\left\{
w,T^{\ast}w,\cdots,T^{\ast n-1}w\right\}  .$ Then $H_w = [w]_{T^*}$, $\dim H_w = n$ and $\sigma(T^{\ast}|H_{w})=\{\lambda\}$ for some $\lambda \in \T$.
\newline(ii) When $n=1$, $\{w\}^\perp$ is $T$-invariant, and $T_0 = T|\{w\}^\perp$ is an isometry.
\newline(iii) When $n \geq 2$, let $w_{n}=\left(  T^{\ast}-\lambda
\right)  ^{n-1}w$ and $\left\{  w_{n}\right\}  ^{\perp}=H\ominus \C\left\{
w_{n}\right\}  .$ Then $\left\{  w_{n}\right\}  ^{\perp}$ is $T$-invariant and $T_{n-1}=T|\left\{  w_{n}\right\}  ^{\perp}$ is a strict
$2(n-1)$-isometry such that%
\begin{equation*}
T_{n-1}^{\ast}T_{n-1}-I=w^{\prime}\otimes w^{\prime},
\end{equation*}
where $w^{\prime}:=P_{\left\{  w_{n}\right\}  ^{\perp}}w\in H_w  \ominus \C\left\{
w_{n}\right\}  .$
\end{lemma}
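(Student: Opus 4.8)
The plan is to establish the three parts of Lemma~\ref{decomsftnis} in order, using Theorem~\ref{thm1} as the engine for the spectral and isometry statements and elementary linear algebra together with the rank-one structure of $\Delta$ for the invariance statements.

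For part~(i), since $\Delta = w\otimes w$ and $T$ is a strict $2n$-isometry, Theorem~\ref{thm1}(i)--(ii) hands us a $\lambda\in\T$ with $(T^*-\lambda)^n w = 0$ but $(T^*-\lambda)^{n-1}w\neq 0$. Thus $w$ is a generalized eigenvector of $T^*$ of order exactly $n$ for the eigenvalue $\lambda$, so the cyclic $T^*$-subspace it generates, namely $H_w=\mathrm{Span}\{w, T^*w,\dots,T^{*\,n-1}w\}$, is finite-dimensional of dimension $n$ (the vectors $(T^*-\lambda)^k w$, $k=0,\dots,n-1$, are linearly independent because $w$ has order exactly $n$, and they span the same space), it is $T^*$-invariant, and $T^*|H_w$ is a single Jordan block, hence has spectrum $\{\lambda\}$. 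I would note $[w]_{T^*}=H_w$ is immediate since the span is already $T^*$-invariant.

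For part~(ii) with $n=1$: here $(T^*-\lambda)w=0$, i.e. $T^*w=\lambda w$. Then for $f\perp w$ we get $\langle Tf, w\rangle = \langle f, T^*w\rangle = \bar\lambda\langle f,w\rangle = 0$, so $\{w\}^\perp$ is $T$-invariant. On this subspace $\langle T_0^*T_0 f,f\rangle = \langle (I+\langle f,w\rangle w)f,f\rangle$; but $Pw$-type terms vanish because... more directly, for $f\perp w$, $T^*Tf = f + \langle Tf,w\rangle\,T^{*}w$-free computation gives $T^*Tf = f + \langle f, w\rangle w$ shifted—I would simply compute $\|Tf\|^2 = \|f\|^2 + |\langle f,w\rangle|^2 = \|f\|^2$, so $T_0$ is an isometry.

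For part~(iii) with $n\geq 2$: set $w_n=(T^*-\lambda)^{n-1}w$, which is nonzero and satisfies $(T^*-\lambda)w_n = 0$, i.e. $T^*w_n = \lambda w_n$, exactly as in the $n=1$ case. Hence by the same argument $\{w_n\}^\perp$ is $T$-invariant, and $T_{n-1}:=T|\{w_n\}^\perp$ is bounded. To identify $\Delta_{n-1}:=T_{n-1}^*T_{n-1}-I$, write $P=P_{\{w_n\}^\perp}$; for $f\in\{w_n\}^\perp$ one has $T_{n-1}^*T_{n-1}f = P T^*T f = P(f + \langle f,w\rangle w) = f + \langle f,w\rangle\,Pw = f + \langle f, w'\rangle\,w'$ after checking $\langle f,w\rangle = \langle f, Pw\rangle = \langle f,w'\rangle$ and that $Pw = w'$ lies in $H_w\ominus\C w_n$ (because $w\in H_w$ and $w_n\in H_w$). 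So $\Delta_{n-1}=w'\otimes w'$ is rank at most one. Finally, to see $T_{n-1}$ is a \emph{strict} $2(n-1)$-isometry, I would invoke Theorem~\ref{thm1} again: one checks $(T_{n-1}^*-\lambda)^{n-1}w' = 0$ and $(T_{n-1}^*-\lambda)^{n-2}w'\neq 0$. Here $T_{n-1}^* = P T^*|\{w_n\}^\perp$, and one shows inductively that $P(T^*-\lambda)^k w = (T_{n-1}^*-\lambda)^k w'$ for $k\leq n-2$ using that $(T^*-\lambda)^k w \in H_w$ stays close to the subspace; since $(T^*-\lambda)^{n-1}w = w_n\perp\{w_n\}^\perp$ we get $P(T^*-\lambda)^{n-1}w = 0$ hence $(T_{n-1}^*-\lambda)^{n-1}w'=0$, while $(T^*-\lambda)^{n-2}w$ has nonzero component off $\C w_n$ (else $w$ would have order $<n$), giving the strictness.

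\textbf{The main obstacle} I anticipate is the bookkeeping in part~(iii): verifying that $Pw = w' \in H_w \ominus \C w_n$ and, more delicately, that the compressed powers $(T_{n-1}^*-\lambda)^k w'$ genuinely coincide with $P(T^*-\lambda)^k w$ for the relevant range of $k$, so that Theorem~\ref{thm1} applies to $T_{n-1}$ and pins down the exact isometry order. This requires carefully tracking how the projection $P$ interacts with powers of $T^*-\lambda$ on the finite-dimensional Jordan chain $H_w$, using that $H_w$ is $T^*$-invariant so the compression only "loses" the top vector $w_n$.
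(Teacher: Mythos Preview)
Your proposal is correct and follows essentially the same route as the paper: part~(i) is read off Theorem~\ref{thm1}, part~(ii) uses $T^*w=\lambda w$ to get invariance and $\|Tf\|^2=\|f\|^2+|\langle f,w\rangle|^2$ to get the isometry, and part~(iii) compresses $\Delta$ to $\{w_n\}^\perp$ and then applies Theorem~\ref{thm1} to $T_{n-1}$ via the identities $(T_{n-1}^*-\lambda)^k w'=P(T^*-\lambda)^k w$. The one place you are actually more careful than the paper is precisely what you flag as the obstacle: the paper writes $(T_{n-1}^*-\lambda)^{k} w' = P_{\{w_n\}^\perp}(T^*-\lambda)^{k} w$ for $k=n-1,\,n-2$ without comment, whereas you plan to justify it; the clean way is to note that $(T^*-\lambda)w_n=0$ forces $P(T^*-\lambda)P=P(T^*-\lambda)$, so $(P(T^*-\lambda))^k=P(T^*-\lambda)^k$ on all of $H$, and then $(T^*-\lambda)^k w'=(T^*-\lambda)^k w$ for $k\ge 1$ since $w'-w\in\mathbb{C}w_n$.
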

\begin{proof}
(i) This follows from Theorem \ref{thm1}.

(ii) When $n=1$, by part (i) $[w]_{T^*} = \C w$, so $\{w\}^\perp$ is $T$-invariant, and $T_0 = T|\{w\}^\perp$ is an isometry.

(iii) Since $(T^*-\lambda)w_{n} = 0$, we have $\{w_n\}^\perp$ is $T$-invariant. So
\begin{align*}
T_{n-1}^{\ast}T_{n-1}-I &= P_{\{w_n\}^\perp} (T^*T-I)P_{\{w_n\}^\perp}\\
& = P_{\{w_n\}^\perp} w \otimes P_{\{w_n\}^\perp} w\\
& = w' \otimes w'.
\end{align*}
By $w' = P_{\{w_n\}^\perp} w = w - \langle w, w_n\rangle\frac{w_n}{\|w_n\|^2}$, we obtain that $w' \in H_w  \ominus \C\left\{w_{n}\right\}$. Note that for $n\geq2$
\begin{align*}
(T_{n-1}^*-\lambda)^{n-1} w' &= P_{\{w_n\}^\perp} (T^*-\lambda)^{n-1} P_{\{w_n\}^\perp} w\\
& = P_{\{w_n\}^\perp} (T^*-\lambda)^{n-1} w = 0,
\end{align*}
and
\begin{align*}
(T_{n-1}^*-\lambda)^{n-2} w' =  P_{\{w_n\}^\perp} (T^*-\lambda)^{n-2} w \neq 0,
\end{align*}
we conclude from Theorem \ref{thm1} that $T_{n-1}$ is a strict $2(n-1)$-isometry.
\end{proof}

Note that if $\lambda \in \T$, then $\beta_m(\overline{\lambda}T) = \beta_m(T)$. So for a $2n$-isometry $T$ with $T^*T - I = w \otimes w, \left(  T^{\ast}-\lambda\right)  ^{n}w=0$, we may suppose $\lambda = 1$.
By the above lemma, there exists an orthonormal basis
$\left\{  e_{1},\cdots,e_{n}\right\}  $ of $H_{w}$ such that with respect to%
\[
H=H_{0}\oplus \C\left\{  e_{1}\right\}  \oplus\cdots\oplus\C\left\{
e_{n}\right\},  \text{ where }H_{0}=H\ominus H_{w} =H_w^\perp,
\]
the operator $T$ has the following matrix decomposition (we suppose $\lambda = 1$)%
\begin{align}
T&=\left[
\begin{array}
[c]{cccccc}%
V & h_{1}\otimes e_{1}  & \cdots & h_{n-1}\otimes e_{n-1}
& h_{n}\otimes e_{n}\\
0 & e_{1}\otimes e_{1}  & \cdots & a_{1,n-1}%
e_{1}\otimes e_{n-1} & a_{1n}e_{1}\otimes e_{n}\\
\vdots & \vdots  & \ddots & \ddots & \vdots\\
0 & 0  & \cdots & e_{n-1}\otimes e_{n-1} & a_{n-1,n}e_{n-1}\otimes e_{n}\\
0 & 0  & \cdots & 0 & e_{n}\otimes e_{n}%
\end{array}
\right]  \label{matrixT}
\end{align}
where $h_{i}\in H_{0}$ for $1\leq i\leq n$ and $a_{ij}\in\mathbb{C}$ for
$1\leq i<j\leq n$ and $V$ is an isometry on $H_{0}.$ Since $T^*T - I = w \otimes w$ is a rank one operator, one checks that $V^*h_i = 0, i =1, \ldots, n$.

To investigate the operator model for the above $2n$-isometry $T$, we introduce the following definition.
\begin{definition}\label{quasianalytic}
We say that an expansive operator $T$ is quasi-analytic if $T|\HM^\perp$ is unitarily equivalent to the unilateral shift $S = (M_z,H^2)$ acting on $H^2$, where $\HM = [\ran \Delta]_{T^*}, \Delta = T^*T - I$.
\end{definition}
Recall from \cite[Proposition 5.6]{AS956} that a $2$-isometry with $T^{\ast}T-I=w\otimes w$ is unitarily equivalent to $V\oplus \mathbf{B}$,
where $V$ is an isometry and $\mathbf{B}$ is a Brownian shift, i.e.
$$\mathbf{B}=\left[\begin{matrix}
S & \sigma(1\otimes1)\\
0 & \lambda%
\end{matrix} \right]
$$
on $H^{2}\oplus\mathbb{C}, \lambda \in \mathbb{T}$, $\sigma > 0$. Note that if $T^*T - I = w\otimes w$, then $[\ran \Delta]_{T^*} = H_w$. So if $T$ is a quasi-analytic $2$-isometry with $T^{\ast}T-I=w\otimes w$, then $T$ is unitarily equivalent to a Brownian shift. We also have a Brownian shift is unitarily equivalent to $(M_z,H(b))$ on $H(b)$ for some $b$, see Theorem \ref{mainopmodel}. Our definition of quasi-analyticity is partly motivated by this observation. It also turns out that if $T$ is a quasi-analytic $2n$-isometry with $T^*T - I = w\otimes w$ on $H$, then $T$ is analytic, i.e. $\bigcap_{n\geq 0} T^n H = \{0\}$. This result is not obvious. Our strategy is to use the rank one extension process to show that $T$ is unitarily equivalent to $(M_z,H(b))$ on $H(b)$ for some rational $b$ of degree $n$, thus obtaining Theorem \ref{mainopmodel1}. The analyticity of $T$ then follows from this.

We remark that Theorem \ref{mainopmodel1} can also be derived from \cite[Theorems 1.1, 1.3 and 4.6]{LGR}, but these theorems in \cite{LGR} apply to a wide class of operators and are not readily obtained. Therefore we provide a different approach here for just the $H(b)$ space setting.

Now we introduce the rank one extension process. Suppose $T$ is quasi-analytic with $T^*T - I = w\otimes w$, then we can assume
\[
T=\left[
\begin{matrix}
S & h\\
0 & A
\end{matrix}
\right]  \text{ on }H^{2}\oplus H_w%
\]
where  $h:H_w\to H^2$ with $\ran h \subseteq \ker S^*$ and $A:H_w\to H_w$ with $(A^*-1)^n=0$ (here we assume $\lambda = 1$).
By Lemma \ref{decomsftnis} and (\ref{matrixT}), one can construct every such $2n$-isometry $T$ inductively by a sequence of operators $T_j \in B(H^2\oplus \C^j)$, $j=0, \dots, n$ with the following properties:
\begin{itemize}
\item[(a1)] $T_0=S$ and for $0< j \le n$, $T_j$ is a strict $2j$-isometry with $T^*_jT_j-I=w_j \otimes w_j$ for some $w_j\in \{0\}\oplus \C^j$, and $(T^*-1)^jw_j = 0$,
\item[(a2)] for $0\le j<n$, $w_{j+1}=(w_j, \omega_{j+1})$,
\item[(a3)] for $0\le j<n$,
$$T_{j+1}=\left[\begin{matrix} T_j &B_j\\0&1\end{matrix}\right],$$
where $B_j:\C\to \ker S^*\oplus \C^j\subseteq H^2\oplus \C^j$ satisfies $T^*_jB_j1=\overline{\omega_{j+1}}w_j$.
\item[(a4)] for $0\le j<n$, $B^*_jB_j 1=|\omega_{j+1}|^2$,
\item[(a5)] $T=T_n, w=w_n$.
\end{itemize}

In fact, identifying $\C^j$ with $\text{Span }\{e_1, \dots , e_j\}$, we note that for $j=0, \dots ,n$  $H_j=H^2 \oplus \C^j$ is invariant for $T$, and the above conditions follow with $T_j=T|H_j$, $\omega_j=\langle w, e_j\rangle$, and $w_j=P_jw$, where $P_j$ denotes the projection onto $H_j$. The fact that
\begin{align*}T_{j+1}^*T_{j+1}-I_{H_{j+1}}&=P_{j+1}(T^*T-I)|H_{j+1}\\
&=w_{j+1}\otimes w_{j+1} \\
&= \left[\begin{matrix} w_j \otimes w_j &\overline{\omega_{j+1}}w_j \otimes e_{j+1}\\ \omega_{j+1}e_{j+1}\otimes w_j&|\omega_{j+1}|^2\end{matrix}\right]\\
\end{align*}
gives the identities $T^*_jB_j1=\overline{\omega_{j+1}}w_j$ and $B_j^*B_j 1=|\omega_{j+1}|^2$.

\begin{lemma}\label{kernel} Let $T_0,\dots, T_n$ be any sequence of operators satisfying $(a1)-(a5)$. Let $z\in \D$ and $j\ge 0$. Then the map $x \to \left[\begin{matrix} x\\-\frac{1}{1-\overline{z}}B_j^*x\end{matrix}\right]$ takes $\ker (T_j-z)^*$ onto $\ker (T_{j+1}-z)^*$.

In particular, we have that $\dim \ker T^*_j=1$ for each $j\ge 0$.
\end{lemma}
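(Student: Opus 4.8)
The plan is to verify directly that the proposed map sends $\ker(T_j - z)^*$ into $\ker(T_{j+1}-z)^*$, then argue injectivity and surjectivity, and finally deduce the one-dimensionality statement by downward induction starting from $\ker(T_0 - z)^* = \ker(S-z)^*$, which is one-dimensional for $z \in \D$ (spanned by the reproducing kernel $(1-\bar z w)^{-1}$ of $H^2$). Throughout I would use the block form $T_{j+1} = \left[\begin{smallmatrix} T_j & B_j \\ 0 & 1\end{smallmatrix}\right]$ from (a3), so that $(T_{j+1}-z)^* = \left[\begin{smallmatrix} (T_j - z)^* & 0 \\ B_j^* & 1 - \bar z\end{smallmatrix}\right]$ acting on $H_j \oplus \C$.

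The computational core is this: given $x \in \ker(T_j-z)^*$, apply $(T_{j+1}-z)^*$ to the vector $\left[\begin{smallmatrix} x \\ -\frac{1}{1-\bar z} B_j^* x\end{smallmatrix}\right]$. The first component is $(T_j-z)^* x = 0$ by hypothesis. The second component is $B_j^* x + (1-\bar z)\cdot\bigl(-\frac{1}{1-\bar z}B_j^* x\bigr) = 0$. So the image lies in $\ker(T_{j+1}-z)^*$; note $1 - \bar z \ne 0$ since $z \in \D$, so the map is well-defined. For injectivity, the map is clearly injective because it is the identity in the first coordinate. For surjectivity, take any $\left[\begin{smallmatrix} u \\ c\end{smallmatrix}\right] \in \ker(T_{j+1}-z)^*$ with $u \in H_j$, $c \in \C$; the first row gives $(T_j-z)^* u = 0$, so $u \in \ker(T_j-z)^*$, and the second row gives $B_j^* u + (1-\bar z) c = 0$, i.e. $c = -\frac{1}{1-\bar z}B_j^* u$, which is exactly the image of $u$. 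Hence the map is a bijection.

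For the final assertion, I would note $\dim\ker T_0^* = \dim\ker S^* = 1$ (the kernel is $\C\cdot 1$, the constants), and then the bijection at $z = 0$ shows $\dim\ker T_{j+1}^* = \dim\ker T_j^*$ for every $j \ge 0$, so $\dim\ker T_j^* = 1$ for all $j$ by induction. (One should check the hypotheses $(a1)$--$(a5)$ are available at each stage: the construction guarantees each $T_j$ satisfies them, so the lemma applies at every level; alternatively one simply invokes the bijection repeatedly.)

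I do not expect any serious obstacle here: the statement is essentially a bookkeeping identity once the block-triangular structure and the relation $T_j^* B_j 1 = \overline{\omega_{j+1}} w_j$ are in hand — and in fact that relation is not even needed for this particular lemma, only the shape of $(T_{j+1}-z)^*$ matters. The only point requiring a moment's care is that $z \in \D$ guarantees $1 - \bar z \ne 0$, so the formula makes sense and the second-row equation can be solved uniquely for the last coordinate; this is what forces the map to be onto rather than merely into. If anything is mildly delicate, it is making sure the identification of $\ran B_j$ inside $\ker S^* \oplus \C^j \subseteq H^2 \oplus \C^j = H_j$ is consistent with treating $B_j^* x$ as a scalar for $x \in H_j$, but this is immediate from $(a3)$.
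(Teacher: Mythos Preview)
Your proof is correct and is exactly the direct block-matrix computation the paper has in mind; the paper's own proof reads simply ``Immediate,'' and you have written out precisely those immediate details.
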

\begin{proof}Immediate.\end{proof}

By the above discussion, we consider operator of the following form $$R=\left[\begin{matrix} T &B\\0&1\end{matrix}\right]  \text{ on }H\oplus\C,$$ where $H$ is a Hilbert space, $\dim \ker T^*=1$.

\begin{lemma}\label{Bt} Let  $w=(w',\omega)\in H\oplus \C$ and $k\in \ker T^*$ with $\|k\|^2= {1+\|w'\|^2}$. Then $R^*R-I=w \otimes w$, if and only if
\begin{enumerate} \item $T^*T-I=w'\otimes w'$,
\item there is $t\in [0,2\pi)$ such that $B1=B_t1=\frac{\overline{\omega}}{1+\|w'\|^2}(e^{it}k+Tw')$.
\end{enumerate}
\end{lemma}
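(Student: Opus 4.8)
The plan is to reduce the claim to a block computation on $H\oplus\C$. Identifying the operator $B\colon\C\to H$ with the vector $B1\in H$, one computes
\[
R^{*}R-I=\begin{bmatrix} T^{*}T-I & T^{*}B\\ B^{*}T & \|B1\|^{2}\end{bmatrix}
\quad\text{and}\quad
w\otimes w=\begin{bmatrix} w'\otimes w' & \overline{\omega}\,(w'\otimes 1)\\ \omega\,(1\otimes w') & |\omega|^{2}\end{bmatrix},
\]
so $R^{*}R-I=w\otimes w$ holds if and only if three requirements are met: $T^{*}T-I=w'\otimes w'$, which is condition (1); $T^{*}B1=\overline{\omega}w'$; and $\|B1\|^{2}=|\omega|^{2}$ (the lower-left entry is automatic, being the adjoint of the upper-right one). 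Granting the first, I only need to show that the last two are equivalent to the stated formula for $B1$. From $T^{*}T=I+w'\otimes w'$ I record the facts used throughout: $T$ is bounded below, hence injective with closed range, so $H=\ran T\oplus\ker T^{*}$ orthogonally; $T^{*}Tw'=(1+\|w'\|^{2})w'$, whence $\|Tw'\|^{2}=(1+\|w'\|^{2})\|w'\|^{2}$; and $\langle Tw',k\rangle=\langle w',T^{*}k\rangle=0$ since $k\in\ker T^{*}$.

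For the ``if'' direction I would substitute $B1=\frac{\overline{\omega}}{1+\|w'\|^{2}}(e^{it}k+Tw')$ into the two remaining requirements. Applying $T^{*}$ annihilates the $e^{it}k$ term and sends $Tw'$ to $(1+\|w'\|^{2})w'$, giving $T^{*}B1=\overline{\omega}w'$. For the norm, orthogonality of $k$ and $Tw'$ together with $\|k\|^{2}=1+\|w'\|^{2}$ yields $\|e^{it}k+Tw'\|^{2}=(1+\|w'\|^{2})+(1+\|w'\|^{2})\|w'\|^{2}=(1+\|w'\|^{2})^{2}$, hence $\|B1\|^{2}=|\omega|^{2}$.

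For the ``only if'' direction, decompose $B1=Tu+v$ with $u\in H$ (uniquely, as $T$ is injective) and $v\in\ker T^{*}$. Then $T^{*}B1=T^{*}Tu=u+\langle u,w'\rangle w'$, and equating this to $\overline{\omega}w'$ forces $u$ to be a scalar multiple of $w'$, from which $u=\frac{\overline{\omega}}{1+\|w'\|^{2}}w'$. Substituting $B1=\frac{\overline{\omega}}{1+\|w'\|^{2}}Tw'+v$ into $\|B1\|^{2}=|\omega|^{2}$, using $Tw'\perp v$ and $\|Tw'\|^{2}=(1+\|w'\|^{2})\|w'\|^{2}$, gives $\|v\|^{2}=|\omega|^{2}-\frac{|\omega|^{2}\|w'\|^{2}}{1+\|w'\|^{2}}=\frac{|\omega|^{2}}{1+\|w'\|^{2}}$. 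Since $\dim\ker T^{*}=1$ and $\|k\|^{2}=1+\|w'\|^{2}$, we have $v=\mu k$ with $|\mu|=\frac{|\omega|}{1+\|w'\|^{2}}$; choosing $t\in[0,2\pi)$ so that the phase of $\mu$ relative to $\overline{\omega}$ equals $e^{it}$ gives $v=\frac{\overline{\omega}}{1+\|w'\|^{2}}e^{it}k$, and therefore $B1=\frac{\overline{\omega}}{1+\|w'\|^{2}}(e^{it}k+Tw')$. When $\omega=0$ this forces $v=0$ and $B1=0$; when $w'=0$ the same computation applies with $u=0$.

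The computation itself is routine. The one place that needs a little care is solving $u+\langle u,w'\rangle w'=\overline{\omega}w'$ for $u$, which is where the rank-one structure of $T^{*}T-I$ and the injectivity of $T$ are genuinely used, and then observing that $\dim\ker T^{*}=1$ pins down $v$ up to exactly the one-parameter phase ambiguity recorded by $t$.
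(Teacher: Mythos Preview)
Your proof is correct and follows essentially the same route as the paper's: both reduce the claim via the block decomposition to the three conditions $T^{*}T-I=w'\otimes w'$, $T^{*}B1=\overline{\omega}w'$, and $\|B1\|^{2}=|\omega|^{2}$, then verify the ``if'' direction by direct substitution and handle the ``only if'' direction by writing $B1$ as an element of $\ran T$ plus an element of $\ker T^{*}$, solving the rank-one equation for the $\ran T$ part, and using the norm condition to pin down the $\ker T^{*}$ part up to a phase. The only cosmetic difference is that the paper uses a unit vector $f\in\ker T^{*}$ while you work directly with the given $k$.
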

\begin{proof} One easily verifies that $R^*R-I=w \otimes w$, if and only if (i) is satisfied and $T^*B1= \overline{\om} w'$ and $B^*B1=|\omega|^2$.

Suppose that (i) and (ii) are satisfied. Then $\|Tw'\|^2=\langle T^*Tw',w'\rangle =\|w'\|^2+\|w'\|^4$ and since $k\perp Tw'$ we have
$B_t^*B_t1=\|B_t1\|^2=\frac{|\om|^2}{1+\|w'\|^2}+\frac{|\om|^2}{(1+\|w'\|^2)^2}\|Tw'\|^2$ $=|\om|^2$. Also,
$$T^*B_t1= \frac{\overline{\omega}}{1+\|w'\|^2}T^*Tw'= \frac{\overline{\omega}}{1+\|w'\|^2}(1+\|w'\|^2)w'=\overline{\om} w'.$$
Hence (i) and (ii) imply that $R^*R-I=w \otimes w$.

Conversely, assume that $R^*R-I=w \otimes w$. Then (i) is satisfied and hence $T$ is bounded below. Let $f\in \ker T^*$ be a unit vector. Since $\dim \ker T^*=1$ we must have $B1=\alpha f + Tx$ for some $\alpha\in \C$ and $x\in H$. We have to show that $|\alpha|^2=\frac{|\om|^2}{1+\|w'\|^2}$ and $x=\frac{\overline{\om}}{1+\|w'\|^2}w'$.

We have
\begin{align}\label{eqnxw}
\overline{\om}w'=T^*B1=T^*(\alpha f+Tx)=T^*Tx=x+\langle x, w'\rangle w'.
\end{align}
Hence $x= \overline{\om}w' - \langle x, w'\rangle w'$. Let $\beta = \overline{\om} - \langle x, w'\rangle$, then $x=\beta w'$. Plugging this into (\ref{eqnxw}) we obtain that $\overline{\om}=\beta(1+\|w'\|^2)$. Thus, $x$ has the required form and
\begin{align*}|\om|^2&=B^*B1=\|B1\|^2 = |\alpha|^2 + \|Tx\|^2\\&=|\alpha|^2 + |\beta|^2\|Tw'\|^2\\
&=|\alpha|^2+\frac{|\om|^2}{(1+\|w'\|^2)^2}(\|w'\|^2+\|w'\|^4) \\
&=|\alpha|^2+\frac{|\om|^2\|w'\|^2}{1+\|w'\|^2}
\end{align*}
Hence $|\alpha|^2=\frac{|\om|^2}{1+\|w'\|^2}$ and this concludes the proof.
\end{proof}
If $\om=0$, then $B=0$ and hence $R$ would have an isometric direct summand. In this case, if $(T^*-1)^n w' = 0$, then $(R^*-1)^n w = 0$.

\begin{lemma}\label{newkernel1}
If $w=(w',\omega)\in H\oplus \C$, $k\in \ker T^*$ with $\|k\|^2= {1+\|w'\|^2}$ and $$R_t=\left[\begin{matrix} T &B_t\\0&1\end{matrix}\right]$$  satisfies (i) and (ii) of  the previous lemma, then $$k_{t}=  \left[\begin{matrix}k\\ - \omega e^{-it}\end{matrix}\right]$$ satisfies $k_{t}\in \ker R_t^*$ with $\|k_{t}\|^2= {1+\|w\|^2} $.
\end{lemma}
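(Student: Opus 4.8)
The plan is to verify the two asserted properties by direct computation; the only inputs needed are that $k\in\ker T^*$, that $\|k\|^{2}=1+\|w'\|^{2}$, and the explicit formula for $B_t1$ recorded in part (ii) of Lemma \ref{Bt}.

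First I would dispose of the norm. Since the inner product on $H\oplus\C$ is the orthogonal sum one,
$$\|k_{t}\|^{2}=\|k\|^{2}+|\om e^{-it}|^{2}=(1+\|w'\|^{2})+|\om|^{2}=1+\|w\|^{2},$$
using $\|w\|^{2}=\|w'\|^{2}+|\om|^{2}$.

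Next I would check that $k_t\in\ker R_t^*$. Writing the adjoint in block form, $R_t^*=\left[\begin{matrix} T^* & 0\\ B_t^* & 1\end{matrix}\right]$ on $H\oplus\C$, so that
$$R_t^*k_{t}=\left[\begin{matrix} T^*k\\ B_t^*k-\om e^{-it}\end{matrix}\right].$$
The top entry is $0$ because $k\in\ker T^*$. For the bottom entry, substitute $B_t1=\frac{\overline{\om}}{1+\|w'\|^{2}}(e^{it}k+Tw')$ and compute
$$B_t^*k=\La k,B_t1\Ra=\frac{\om}{1+\|w'\|^{2}}\bigl(e^{-it}\|k\|^{2}+\La k,Tw'\Ra\bigr).$$
Now $\La k,Tw'\Ra=\La T^*k,w'\Ra=0$ and $\|k\|^{2}=1+\|w'\|^{2}$, whence $B_t^*k=\om e^{-it}$ and the bottom entry of $R_t^*k_t$ vanishes as well. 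Therefore $k_t\in\ker R_t^*$, which finishes the argument.

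I do not expect any genuine obstacle here: the proof is mechanical. The two points deserving a moment's care are the bookkeeping of the conjugation in the inner product (the scalar $\overline{\om}$ in $B_t1$ must emerge as $\om e^{-it}$ in $B_t^*k$), and the observation that the normalization $\|k\|^{2}=1+\|w'\|^{2}$ is exactly what cancels the factor $1+\|w'\|^{2}$ in the denominator of $B_t1$ — which is presumably the reason that normalization was imposed in Lemma \ref{Bt} in the first place.
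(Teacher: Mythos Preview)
Your proof is correct and follows essentially the same approach as the paper. The only cosmetic difference is that the paper packages the block-matrix computation of $R_t^*k_t$ as an invocation of Lemma~\ref{kernel} with $z=0$ (whose proof is declared ``Immediate''), whereas you write that computation out explicitly; the key step $B_t^*k=\langle k,B_t1\rangle=\omega e^{-it}$ is identical in both.
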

\begin{proof} The condition on $\|k_{t}\|$ is obvious.
Using the definition of $B_t$ we obtain
$$B_t^*k= \langle k, B_t1\rangle =\frac{ \omega e^{-it}}{1+\|w'\|^2}\|k\|^2= \omega e^{-it}.$$
The fact that $k_{t}\in \ker R_t^*$ now follows from Lemma \ref{kernel} with $z=0$.
\end{proof}

\begin{lemma} \label{newkernel2} If $w=(w',\omega)\in H\oplus \C, \omega \neq 0$, $k\in \ker T^*$ with $\|k\|^2= {1+\|w'\|^2}$ and $$R_t=\left[\begin{matrix} T &B_t\\0&1\end{matrix}\right]$$  satisfies (i) and (ii) of  Lemma \ref{Bt}. In addition, we suppose $T$ is a strict $2n$-isometry with $(T^*-1)^n w' = 0$. Then
\begin{enumerate}
\item $(R_t^*-1)^{n+1} w = 0$.
\item There is a unique $t \in [0,2\pi)$ such that $(R_t^*-1)^{n} w = 0$.
\end{enumerate}
\end{lemma}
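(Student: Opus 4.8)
The plan is to compute the powers of $R_t^{\ast}-1$ explicitly and thereby reduce both assertions to a single scalar equation in $e^{it}$. Since $R_t^{\ast}=\left[\begin{smallmatrix}T^{\ast}&0\\ B_t^{\ast}&1\end{smallmatrix}\right]$ on $H\oplus\C$, a one-line induction gives, for $m\ge 1$,
$(R_t^{\ast}-1)^m(x,r)=\bigl((T^{\ast}-1)^m x,\ B_t^{\ast}(T^{\ast}-1)^{m-1}x\bigr)$, so the action of $(R_t^{\ast}-1)^m$ on $w=(w',\omega)$ depends only on $w'$. Taking $m=n+1$ and using $(T^{\ast}-1)^n w'=0$ kills both coordinates, which is (1). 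Taking $m=n$ gives $(R_t^{\ast}-1)^n w=(0,\ B_t^{\ast}u)$, where $u:=(T^{\ast}-1)^{n-1}w'$, which is nonzero (Theorem \ref{thm1}, since $T$ is a strict $2n$-isometry) and satisfies $(T^{\ast}-1)u=0$, i.e. $T^{\ast}u=u$. Plugging in the expression for $B_t1$ from Lemma \ref{Bt}(ii) and using $\langle u,Tw'\rangle=\langle T^{\ast}u,w'\rangle=\langle u,w'\rangle$, one obtains $B_t^{\ast}u=\frac{\omega}{1+\|w'\|^2}\bigl(e^{-it}\langle u,k\rangle+\langle u,w'\rangle\bigr)$. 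Hence (2) is equivalent to: the equation $e^{-it}\langle u,k\rangle=-\langle u,w'\rangle$ has a unique solution $t\in[0,2\pi)$; and since $e^{it}\mapsto e^{-it}$ is injective on $[0,2\pi)$, this holds as soon as one proves $0\neq|\langle u,k\rangle|=|\langle u,w'\rangle|$.

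Set $\rho=\langle u,w'\rangle$. To see $\rho\neq 0$, I would first record two identities for the eigenvector $u$: from $\|Tu\|^2=\langle(I+w'\otimes w')u,u\rangle=\|u\|^2+|\rho|^2$ and $\langle Tu,u\rangle=\langle u,T^{\ast}u\rangle=\|u\|^2$ one gets $\|(T-1)u\|^2=|\rho|^2$; and expanding $(T^{\ast}-1)(T-1)u$ via $T^{\ast}T=I+w'\otimes w'$ and $T^{\ast}u=u$ gives $T^{\ast}(T-1)u=\rho w'$. If $\rho=0$, the first identity forces $Tu=u$; together with $T^{\ast}u=u$ this makes $\C u$ reduce $T$, so $(\C u)^{\perp}$ is $T^{\ast}$-invariant. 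But $\rho=0$ also gives $w'\perp u$, hence $w'\in(\C u)^{\perp}$, whence $u=(T^{\ast}-1)^{n-1}w'\in(\C u)^{\perp}$, contradicting $u\neq 0$.

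For the modulus equality I would normalize $\widehat u=u/\|u\|$ (still $T^{\ast}\widehat u=\widehat u$) and use that $T$ is bounded below, so $H=\ran T\oplus\ker T^{\ast}$ with $\ker T^{\ast}=\C k'$, $k'=k/\|k\|$. Write $\widehat u=Ty+\langle\widehat u,k'\rangle k'$ with $Ty\perp k'$; applying $T^{\ast}$ and using $T^{\ast}T=I+w'\otimes w'$ forces $y=\widehat u-\delta w'$ with $\delta=\langle\widehat u,w'\rangle/(1+\|w'\|^2)$. A short computation of $\|Ty\|^2=\|y\|^2+|\delta|^2$ then yields $\|Ty\|^2=1-|\langle\widehat u,w'\rangle|^2/(1+\|w'\|^2)$, so from $1=\|\widehat u\|^2=\|Ty\|^2+|\langle\widehat u,k'\rangle|^2$ we get $|\langle\widehat u,k'\rangle|^2=|\langle\widehat u,w'\rangle|^2/(1+\|w'\|^2)$. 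Multiplying by $\|k\|^2=1+\|w'\|^2$ and then by $\|u\|^2$ gives $|\langle u,k\rangle|=|\langle u,w'\rangle|$, and with $\rho\neq 0$ this finishes the proof, the unique $t$ being the one with $e^{-it}=-\langle u,w'\rangle/\langle u,k\rangle$. The main obstacle is precisely this last part: $\langle u,k\rangle$ pairs $u$ against $\ker T^{\ast}$, about which we know a priori only $\|k\|$, so it must be reached indirectly; the bridge is the combination of the fixed-vector relation $T^{\ast}u=u$ with the rank-one identity $T^{\ast}T=I+w'\otimes w'$, which is exactly what makes $Ty$ and $\|Ty\|$ computable.
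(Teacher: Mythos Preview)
Your argument is correct, and the reduction to the scalar equation $e^{-it}\langle u,k\rangle=-\langle u,w'\rangle$ with $u=(T^{\ast}-1)^{n-1}w'$ is exactly what the paper does. Where you diverge is in establishing $|\langle u,k\rangle|=|\langle u,w'\rangle|\neq 0$.

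The paper appeals to the upper-triangular matrix form \eqref{matrixT} of $T$ on $H_0\oplus\C e_1\oplus\cdots\oplus\C e_n$: in that basis $(T^{\ast}-1)|H_{w'}$ is strictly lower triangular, so $(T^{\ast}-1)^{n-1}w'$ is a nonzero multiple of $e_n$, and only the $e_n$-coordinates of $k$ and $w'$ enter the two inner products. Lemma~\ref{newkernel1}, read one level down in the inductive extension, says precisely that these last coordinates have equal modulus. So the paper's proof leans on the explicit coordinate scaffolding built up in Lemma~\ref{decomsftnis} and the rank-one extension process.

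Your route is basis-free. The contradiction for $\rho\neq 0$ via ``$\rho=0\Rightarrow Tu=u\Rightarrow\C u$ reduces $T\Rightarrow u\in(\C u)^{\perp}$'' is clean, and your modulus identity comes from decomposing $\widehat u$ along $\ran T\oplus\ker T^{\ast}$ and computing $\|Ty\|^2$ from the rank-one defect. This proves the lemma under the bare hypotheses $T^{\ast}T-I=w'\otimes w'$, $\dim\ker T^{\ast}=1$, and $(T^{\ast}-1)^nw'=0\neq(T^{\ast}-1)^{n-1}w'$, without ever invoking \eqref{matrixT} or the inductive Lemma~\ref{newkernel1}. The paper's version is shorter once that machinery is in place; yours is self-contained and slightly more general. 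One small point: you use $\dim\ker T^{\ast}=1$, which is not written in the displayed statement of the lemma but is part of the standing setup introduced just before Lemma~\ref{Bt}.
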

\begin{proof}
(i) We have
\begin{align*}
(R_t^*-1)^{n+1} w=\left[\begin{matrix} (T^*-1)^{n+1} &0\\B_t^*(T^*-1)^n&0\end{matrix}\right]\left[\begin{matrix} w'\\ \omega\end{matrix}\right] = 0.
\end{align*}
(ii) We have
\begin{align*}
(R_t^*-1)^{n} w&=\left[\begin{matrix} (T^*-1)^{n} &0\\B_t^*(T^*-1)^{n-1}&0\end{matrix}\right]\left[\begin{matrix} w'\\ \omega\end{matrix}\right] = \left[\begin{matrix} 0\\B_t^*(T^*-1)^{n-1}w'\end{matrix}\right].
\end{align*}
By Lemma \ref{Bt},
\begin{align*}
B_t^*(T^*-1)^{n-1}w'& = \langle B_t^*(T^*-1)^{n-1}w', 1\rangle\\
& = \langle (T^*-1)^{n-1}w', \frac{\overline{\omega}}{1+\|w'\|^2}(e^{it}k+Tw')\rangle\\
& = \frac{\omega}{1+\|w'\|^2} \langle (T^*-1)^{n-1}w', e^{it}k+w'\rangle.
\end{align*}
Note that using (\ref{matrixT}) we have $(T^*-1)^{n-1}w'$ is  a nonzero vector with 0 in the first $n-1$ entries, and Lemma \ref{newkernel1} ensures that the $n$-th entries of $k$ and $w'$ have the same modulus, it follows that there is a unique $t \in [0,2\pi)$ such that $B_t^*(T^*-1)^{n-1}w' = 0$. Hence there is a unique $t \in [0,2\pi)$ such that $(R_t^*-1)^{n} w = 0$.
\end{proof}
Part (ii) of the above lemma implies that if $T$ is a strict $2n$-isometry and one wants to extend $R_t$ to be a strict $2(n+1)$-isometry, then there is a unique $t \in [0,2\pi)$ that does not lead to a strict extension, since for this $t$, $(R_t^*-1)^{n} w = 0$ and $R_t$ is a $2n$-isometry by Theorem \ref{thm1}.

Now suppose that $H$ is a forward shift invariant reproducing kernel Hilbert space on $\D$ with kernel $k_w(z)$ such that $k_0(z)=1$ for all $z\in \D$. Also assume that with $T=(M_z,H)$ we have $T^*T=I + w'\otimes w'$ for some function $w'\in H$, and $\dim \ker T^*=1$. Set $b_0=\frac{-1}{\sqrt{1+\|w'\|^2}}Tw'$. Note that in this case the function $k$ as in Lemma \ref{Bt} is the constant function $k=\sqrt{1+\|w'\|^2}$, and hence the $B_t$ from Lemma \ref{Bt} is given by
$$B_t 1= \frac{\overline{\omega}}{\sqrt{1+\|w'\|^2}}(e^{it}-b_0).$$

\begin{lemma}\label{uniteqo}
Suppose $H$ satisfies the conditions as above. Let $t\in [0,2\pi)$, $0\ne \omega \in \C$,  and
$$R_t=\left[\begin{matrix} T &B_t\\0&1\end{matrix}\right]  \text{ on }H\oplus\C.$$
Then for each $z\in \D$ we have
$$ k_z^t=\left(\begin{matrix} k_z\\ \frac{\omega}{\overline{z}-1}\frac{1}{\sqrt{1+\|w'\|^2}}(e^{-it}-\overline{b_0(z)})\end{matrix}\right)\in \ker (R_t^*-\overline{z})$$
\end{lemma}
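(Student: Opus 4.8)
The plan is to verify directly that $k_z^t$ lies in the kernel of $R_t^* - \overline{z}$ by writing out the action of the $2\times 2$ operator matrix. Recall that
\[
R_t^* = \begin{bmatrix} T^* & 0 \\ B_t^* & 1 \end{bmatrix},
\]
so for a vector $(f, c) \in H \oplus \C$ the equation $(R_t^* - \overline{z})(f,c) = 0$ splits into the two scalar-operator equations $(T^* - \overline{z}) f = 0$ and $B_t^* f + (1 - \overline{z}) c = 0$. The first equation is automatically satisfied by the first component $k_z$ of $k_z^t$, since $k_z$ is the reproducing kernel of the forward shift invariant space $H$ and therefore $T^* k_z = \overline{z} k_z$. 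So the whole content of the lemma is the second equation: one must check that
\[
B_t^* k_z = (\overline{z} - 1)\cdot \frac{\omega}{\overline{z}-1}\cdot\frac{1}{\sqrt{1+\|w'\|^2}}\bigl(e^{-it} - \overline{b_0(z)}\bigr) = \frac{\omega}{\sqrt{1+\|w'\|^2}}\bigl(e^{-it} - \overline{b_0(z)}\bigr).
\]

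To compute $B_t^* k_z$, I would use that $B_t^* k_z = \overline{\langle k_z, B_t 1\rangle}$ together with the explicit formula for $B_t 1$ recorded just before the statement, namely $B_t 1 = \frac{\overline{\omega}}{\sqrt{1+\|w'\|^2}}(e^{it} - b_0)$. Then
\[
\langle k_z, B_t 1\rangle = \frac{\omega}{\sqrt{1+\|w'\|^2}}\,\langle k_z, e^{it} - b_0\rangle = \frac{\omega}{\sqrt{1+\|w'\|^2}}\bigl(e^{-it}\langle k_z, 1\rangle - \langle k_z, b_0\rangle\bigr).
\]
Here I use the reproducing property twice: $\langle k_z, 1\rangle = \overline{1(z)} = 1$ because the constant function $1 = k_0$ belongs to $H$ (this is exactly the hypothesis $k_0(z) = 1$), and $\langle k_z, b_0 \rangle = \overline{b_0(z)}$. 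Taking the complex conjugate of the whole expression gives $B_t^* k_z = \frac{\omega}{\sqrt{1+\|w'\|^2}}(e^{-it} - \overline{b_0(z)})$ — wait, I should be careful with which side the conjugation falls on; one should check the convention $B_t^* k_z = \overline{\langle k_z, B_t 1\rangle}$ versus $\langle B_t^* k_z, 1\rangle = \langle k_z, B_t 1\rangle$, and since $B_t^* k_z$ is a scalar the latter reads $B_t^* k_z = \langle k_z, B_t 1\rangle$ directly. Either way the identity matches the claimed second component, completing the verification.

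There is essentially no obstacle here; the computation is short and the proof in the source likely just says ``a straightforward computation.'' The only points requiring a moment's care are: (1) keeping track of the complex conjugates and the inner-product convention when passing between $B_t^*$ acting on $H$-vectors and scalars; (2) noting that the apparent singularity at $z$ with $\overline{z} = 1$ in the formula for $k_z^t$ is harmless because for $z \in \D$ we always have $\overline{z} \ne 1$, so the second component is well-defined; and (3) confirming that $T^* k_z = \overline{z}\,k_z$ really does hold in a forward shift invariant reproducing kernel Hilbert space, which is the standard fact that the adjoint of multiplication by $z$ sends kernels to scaled kernels. With those in hand the lemma follows immediately.
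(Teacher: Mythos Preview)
Your proof is correct and follows essentially the same approach as the paper: verify $k_z\in\ker(T^*-\overline{z})$ by the reproducing kernel property, then compute $B_t^*k_z=\langle k_z,B_t1\rangle$ using the explicit formula for $B_t1$ and the reproducing property to match the claimed second component. The paper packages the first reduction as an appeal to Lemma~\ref{kernel}, but the computation is identical to yours.
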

\begin{proof} For $z\in \D$ we have $k_z\in \ker(T^*-\overline{z})$. Hence according to Lemma \ref{kernel} we have to show that $\frac{1}{\overline{z}-1}B^*_tk_z$ equals the second component in the vector displayed in the lemma. We multiply through by $\overline{z}-1$ and compute
\begin{align*}
B^*_tk_z &= \La k_z,B_t1\Ra\\
&=\frac{{\omega}}{\sqrt{1+\|w'\|^2}}(e^{-it}\La k_z, 1\Ra -\La k_z, b_0\Ra)\\
&= \frac{{\omega}}{\sqrt{1+\|w'\|^2}}(e^{-it}- \overline{b_0(z)}).
\end{align*}
\end{proof}
Let $h_{t,\omega}(z)= \frac{\overline{\omega}}{\sqrt{1+\|w'\|^2}}\frac{1-e^{-it}b_0(z)}{z-1}$. If $\frac{1-e^{-it}b_0(z)}{z-1}\not\in H$, then for $z,w\in \D$  $$k^t_w(z)= k_w(z)+ \frac{|\omega|^2}{(\overline{w}-1)(z-1)}\frac{1}{1+\|w'\|^2}(1-e^{it}\overline{b_0(w)})(1-e^{-it}{b_0(z)})$$
is the reproducing kernel for a Hilbert function space $H_t$:
$$H_t=\{f+ \alpha h_{t,\omega}: f\in H, \alpha \in \C\}, \quad \|f+\alpha h\|^2=\|f\|^2+|\alpha|^2,$$
and $R_t$ in the above lemma is unitarily equivalent to $(M_z,H_t)$. We will show that $(M_z,H_t)$ is unitarily equivalent to $(M_z,H(b_t))$ for some $b_t$, which implies $R_t$ is unitarily equivalent to $(M_z,H(b_t))$.

Suppose $b$ is a nonextreme point of the unit ball of $H^{\infty}$. Let $|\alpha|<1$ and $b_\alpha(z)= \frac{b(z)-\alpha}{1-\overline{\alpha}b(z)}$. The following lemma is contained in \cite[Lemma 4.7]{LGR}.
\begin{lemma}\label{uniteqsohb}
For each $|\alpha|<1$, $(M_z,H(b))$ and $(M_z,H(b_\alpha))$ are unitarily equivalent.
\end{lemma}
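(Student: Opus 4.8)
The plan is to exhibit an explicit unitary $U\colon H(b)\to H(b_\alpha)$ realized as a multiplication operator; since multiplication operators commute with $M_z$, producing such a $U$ immediately yields the asserted equivalence. Write $\varphi_\alpha(w)=\frac{w-\alpha}{1-\overline{\alpha}w}$, an automorphism of $\D$, so that $b_\alpha=\varphi_\alpha\circ b$. The first step is the elementary identity
\[
1-\overline{\varphi_\alpha(u)}\varphi_\alpha(v)=\frac{(1-|\alpha|^{2})(1-\overline{u}v)}{(1-\alpha\overline{u})(1-\overline{\alpha}v)},\qquad u,v\in\D,
\]
which is checked by clearing denominators. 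Putting $u=b(\lambda)$, $v=b(z)$ and dividing by $1-\overline{\lambda}z$ gives
\[
K_{\lambda}^{b_\alpha}(z)=c(z)\,\overline{c(\lambda)}\,K_{\lambda}^{b}(z),\qquad c(z):=\frac{\sqrt{1-|\alpha|^{2}}}{1-\overline{\alpha}b(z)},
\]
and since $|1-\overline{\alpha}b(z)|\ge 1-|\alpha|>0$ on $\D$, the function $c$ lies in $H^{\infty}$ and is bounded away from $0$.

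Next I would run the standard reproducing-kernel bookkeeping. Define $V$ on finite linear combinations of the $K_{\lambda}^{b_\alpha}$ by $V\!\left(\sum_j c_j K_{\lambda_j}^{b_\alpha}\right)=\sum_j c_j\overline{c(\lambda_j)}K_{\lambda_j}^{b}$. Expanding the norms $\|\sum_j c_j\overline{c(\lambda_j)}K_{\lambda_j}^{b}\|_{H(b)}^{2}$ and $\|\sum_j c_j K_{\lambda_j}^{b_\alpha}\|_{H(b_\alpha)}^{2}$ and using $\langle K_{\mu}^{b_\alpha},K_{\lambda}^{b_\alpha}\rangle_{H(b_\alpha)}=K_{\mu}^{b_\alpha}(\lambda)=c(\lambda)\overline{c(\mu)}K_{\mu}^{b}(\lambda)=c(\lambda)\overline{c(\mu)}\langle K_{\mu}^{b},K_{\lambda}^{b}\rangle_{H(b)}$ shows the two are equal; hence $V$ is well defined and isometric on the span, so it extends to an isometry $V\colon H(b_\alpha)\to H(b)$. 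As $c$ has no zeros, the vectors $\overline{c(\lambda)}K_{\lambda}^{b}$ still span $H(b)$, so $V$ is onto, i.e.\ unitary. Put $U=V^{*}\colon H(b)\to H(b_\alpha)$; then for $f\in H(b)$ and $\lambda\in\D$,
\[
(Uf)(\lambda)=\langle Uf,K_{\lambda}^{b_\alpha}\rangle_{H(b_\alpha)}=\langle f,\overline{c(\lambda)}K_{\lambda}^{b}\rangle_{H(b)}=c(\lambda)f(\lambda),
\]
so $U$ is multiplication by $c$, and consequently $UM_{z}f=c\cdot zf=z\cdot cf=M_{z}Uf$.

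Finally I would record that $b_\alpha$ is again a nonextreme point of the unit ball of $H^{\infty}$, since on $\T$ one has $1-|b_\alpha|^{2}=(1-|\alpha|^{2})(1-|b|^{2})\,|1-\overline{\alpha}b|^{-2}$ and $\log|1-\overline{\alpha}b|\in L^{1}(\T)$; hence $M_{z}$ is bounded on $H(b_\alpha)$ as well, and the intertwining unitary $U$ above witnesses the unitary equivalence of $(M_{z},H(b))$ and $(M_{z},H(b_\alpha))$. There is no serious obstacle here: the whole content is that post-composing the symbol with a disc automorphism scales the de Branges-Rovnyak kernel by a factor of the form $c(z)\overline{c(\lambda)}$ with $c$ invertible in $H^{\infty}$, and the only point requiring a little care is verifying that this $c$ is genuinely bounded above and below, so that multiplication by $c$ is a bijective isometry between the two spaces (equivalently, that the map $V$ on kernels is well defined).
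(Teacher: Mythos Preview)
Your argument is correct and is the standard proof: the identity
\[
K_{\lambda}^{b_\alpha}(z)=c(z)\,\overline{c(\lambda)}\,K_{\lambda}^{b}(z),\qquad c(z)=\frac{\sqrt{1-|\alpha|^{2}}}{1-\overline{\alpha}b(z)},
\]
with $c,c^{-1}\in H^\infty$, immediately gives that multiplication by $c$ is a unitary $H(b)\to H(b_\alpha)$ intertwining the two shifts. The paper itself does not supply a proof of this lemma; it simply cites \cite[Lemma~4.7]{LGR}, and your computation is exactly the argument one would expect behind that citation (the same kernel-rescaling device also appears later in the paper in the proof of Theorem~\ref{unitequirt}). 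The extra paragraph checking that $b_\alpha$ is again nonextreme is a nice touch, though strictly speaking the boundedness of $M_z$ on $H(b_\alpha)$ already follows once you know $U$ is unitary and intertwines the shifts.
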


Thus we may assume that $b(0)=0$. Now let $b_0$ be a nonextreme point, rational function with $\|b\|_\infty  \leq 1, b_0(0)=0$, and consider $H(b_0)$. Let $T = (M_z,H(b_0))$, $w' = -\sqrt{1+\|b_0\|^2}Lb_0$. Then
$$T^* T - I = (1+\|b_0\|^2)Lb_0 \otimes Lb_0 = w' \otimes w'.$$
Let $a$ be the pythagorean mate of $b_0$. Note that
$$
\begin{cases}
\|b_0\|^{2} = a(0)^{-2} - 1,\\
\|Lb_0\|^{2} = 1 - |b_0(0)|^{2} - a(0)^{2} = 1-a(0)^2,
\end{cases}
$$
see e.g. \cite[Corollary 23.9]{FM16}. So
\begin{align}\label{normofwib}
\|w'\|^2 = (1+\|b_0\|^2)\|Lb_0\|^2 = a(0)^{-2}-1 = \|b_0\|^2.
\end{align}
Since $TLb_0 = b_0$, it follows that $b_0 = \frac{-1}{\sqrt{1+\|w'\|^2}}Tw'$. Hence $H(b_0)$ satisfies the hypothesis of Lemma \ref{uniteqo}.

Set $s=\frac{|\omega|^2}{1+\|w'\|^2+|\omega|^2}$. Let $t\in [0,2\pi)$ be such that $e^{-it}b_0(1) \neq 1$, and consider
$$H(z)=H_{s,t}(z)=s\frac{1+z}{1-z} + (1-s) \frac{1+e^{-it}b_0(z)}{1-e^{-it}b_0(z)}.$$
Then Re$H(z) \ge 0$ and $H(0)=1$, hence there is $b_t=b_{s,t}$ in the unit ball of $H^\infty$ such that
\begin{align}\label{btdebbzas}
H(z)=\frac{1+b_t(z)}{1-b_t(z)} = s\frac{1+z}{1-z} + (1-s) \frac{1+e^{-it}b_0(z)}{1-e^{-it}b_0(z)}
\end{align}
One checks that $b_t$ is a rational function with $b_t(0)=0$, $b_t(1)=1$, and $b_t'(1)= 1/s$.

\begin{theorem}\label{unitequirt}
Let $b_0$ be a rational function with $\|b_0\|_\infty  \leq 1, b_0(0)=0$, $H = H(b_0)$, and $R_t$ be defined in Lemma \ref{uniteqo} and $b_t$ be defined by (\ref{btdebbzas}). Then $R_t$ is unitarily equivalent to $(M_z,H(b_t))$.
\end{theorem}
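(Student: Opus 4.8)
The plan is to use the discussion immediately preceding the theorem to replace $R_t$ by the multiplication operator on the explicit reproducing kernel Hilbert space $H_t$, and then to identify $(M_z,H_t)$ with $(M_z,H(b_t))$ by computing that the two kernels differ by a factor of the form $g(z)\overline{g(w)}$. So first I would verify the hypothesis needed for that discussion, namely that $\frac{1-e^{-it}b_0(z)}{z-1}\notin H(b_0)$: since $e^{-it}b_0(1)\neq1$, this rational function has an honest simple pole at the boundary point $1\in\T$, hence is not square integrable on $\T$ and so cannot belong to $H^2\supseteq H(b_0)$. Consequently $R_t$ is unitarily equivalent to $(M_z,H_t)$, where $H_t$ is the reproducing kernel Hilbert space with kernel $k_w^t(z)$, and it remains to prove $(M_z,H_t)\cong(M_z,H(b_t))$.

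Write $c=e^{-it}b_0$, $\varphi_b=\frac{1+b}{1-b}$ and $\psi(z)=\frac{1+z}{1-z}$, and note that $H(c)=H(b_0)$ with the same reproducing kernel since $|e^{-it}|=1$. I would use the elementary identity
$$K_w^b(z)=\frac{(1-b(z))(1-\overline{b(w)})}{2(1-\overline w z)}\bigl(\varphi_b(z)+\overline{\varphi_b(w)}\bigr),$$
together with $\frac{1}{(1-\overline w)(1-z)}=\frac{\psi(z)+\overline{\psi(w)}}{2(1-\overline w z)}$, to rewrite the two terms in the formula for $k_w^t$ and factor out $(1-c(z))(1-\overline{c(w)})$, obtaining
$$k_w^t(z)=\frac{(1-c(z))(1-\overline{c(w)})}{2(1-\overline w z)}\Bigl[\varphi_c(z)+\overline{\varphi_c(w)}+\frac{|\omega|^2}{1+\|w'\|^2}\bigl(\psi(z)+\overline{\psi(w)}\bigr)\Bigr].$$
Since $s=\frac{|\omega|^2}{1+\|w'\|^2+|\omega|^2}$ gives $\frac{|\omega|^2}{1+\|w'\|^2}=\frac{s}{1-s}$, the bracket equals $\frac{1}{1-s}\bigl(\varphi_{b_t}(z)+\overline{\varphi_{b_t}(w)}\bigr)$, because (\ref{btdebbzas}) says precisely that $\varphi_{b_t}=s\psi+(1-s)\varphi_c$. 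Applying the kernel identity once more, now to $b_t$, I would arrive at
$$k_w^t(z)=g(z)\overline{g(w)}\,K_w^{b_t}(z),\qquad g(z)=\frac{1-c(z)}{\sqrt{1-s}\,(1-b_t(z))}.$$

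To finish, I would observe that $g$ and $1/g$ are analytic and zero-free on $\D$: since $b_0(0)=b_t(0)=0$, neither $b_0$ nor $b_t$ is a unimodular constant, so $|b_0|<1$ and $|b_t|<1$ on $\D$, and $g(0)=1/\sqrt{1-s}\neq0$; moreover $b_t$ is rational, and since $b_0$ is nonextreme (hence not inner) the convex combination in (\ref{btdebbzas}) retains a nonzero absolutely continuous part, so $b_t$ is not inner, hence nonextreme, and $M_z$ is bounded on $H(b_t)$. Then the standard fact that $K_1(z,w)=g(z)\overline{g(w)}K_2(z,w)$ makes $f\mapsto gf$ a unitary from $\mathcal{H}(K_2)$ onto $\mathcal{H}(K_1)$ intertwining the shifts shows that multiplication by $1/g$ is a unitary from $H_t$ onto $H(b_t)$ intertwining the two copies of $M_z$; composing with the equivalence $R_t\cong(M_z,H_t)$ yields $R_t\cong(M_z,H(b_t))$.

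Everything after the kernel identity is routine; the points that need real care are the verification that $\frac{1-e^{-it}b_0(z)}{z-1}\notin H(b_0)$, which legitimizes $H_t$ and is where the hypothesis $e^{-it}b_0(1)\neq1$ enters, and the check that $g$ and $1/g$ are genuine zero-free analytic functions on $\D$, so that the abstract kernel-factorization lemma produces an honest multiplication unitary rather than merely an abstract isometry.
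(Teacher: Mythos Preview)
Your proof is correct and follows essentially the same route as the paper: both arguments compute the kernel identity via the Herglotz-type formula $K_w^b(z)=\frac{(1-b(z))(1-\overline{b(w)})}{2(1-\overline w z)}\bigl(\varphi_b(z)+\overline{\varphi_b(w)}\bigr)$ and use the defining relation (\ref{btdebbzas}) to factor one kernel as $g(z)\overline{g(w)}$ times the other, your $g$ being the reciprocal of the paper's. You are somewhat more careful than the paper in explicitly verifying the side conditions---that $\frac{1-e^{-it}b_0}{z-1}\notin H(b_0)$ (so the space $H_t$ is well defined), that $g$ and $1/g$ are zero-free analytic on $\D$, and that $b_t$ is nonextreme---which the paper leaves implicit.
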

\begin{proof}
Note that
$$\frac{1-z\overline{w}}{(1-z)(1-\overline{w})}= \frac{1}{2}\left[\frac{1+z}{1-z} + \frac{1+\overline{w}}{1-\overline{w}}\right].$$
Thus
$$1-b_t(z)\overline{b_t(w)}=\frac{1}{2}(1-b_t(z))(1-\overline{b_t(w)})\left[\frac{1+b_t(z)}{1-b_t(z)} + \frac{1+\overline{b_t(w)}}{1-\overline{b_t(w)}}\right],$$
and
$$\frac{1-e^{it}b_0(z)\overline{e^{it}b_0(w)}}{(1-e^{it}b_0(z))(1-\overline{e^{it}b_0(w)})}= \frac{1}{2}\left[\frac{1+e^{it}b_0(z)}{1-e^{it}b_0(z)} + \frac{1+\overline{e^{it}b_0(w)}}{1-\overline{e^{it}b_0(w)}}\right].$$
So
\begin{align*} K^{b_t}_w(z)&= \frac{1-b_t(z)\overline{b_t(w)}}{1-z\overline{w}}\\
&= \frac{1}{2}\frac{(1-b_t(z))(1-\overline{b_t(w)})}{1-z\overline{w}}\left[H(z)+\overline{H(w)}\right]\\
&= e(z)\overline{e(w)} + f(z)\overline{f(w)} \frac{1-b_0(z)\overline{b_0(w)}}{1-z\overline{w}}\\
&=e(z)\overline{e(w)} + f(z)\overline{f(w)} K^{b_0}_w(z),
\end{align*}
where $e(z)= \sqrt{s}\frac{1-b_t(z)}{1-z}$ and $f(z)=\sqrt{1-s}\ \  \frac{1-b_t(z)}{1-e^{-it}b_0(z)}$.

Note that $s=\frac{|\omega|^2}{1+\|w'\|^2+|\omega|^2}$ and the reproducing kernel of $H_t$ is
\begin{align*}
&k^t_w(z)\\
&= K^{b_0}_w(z)+ \frac{|\omega|^2}{(\overline{w}-1)(z-1)}\frac{1}{1+\|w'\|^2}(1-e^{it}\overline{b_0(w)})(1-e^{-it}{b_0(z)})\\
&= \frac{1-\overline{b_0(w)}b_0(z)}{1-\overline{w}z} + \frac{|\omega|^2}{(\overline{w}-1)(z-1)}\frac{1}{1+\|w'\|^2}(1-e^{it}\overline{b_0(w)})(1-e^{-it}{b_0(z)}).
\end{align*}
So $K^{b_t}_w(z)=g(z)\overline{g(w)} k^t_w(z)$, where
$$g(z) = \frac{\sqrt{1+\|w'\|^2}}{\sqrt{1+\|w'\|^2+|\omega|^2}}\frac{1-b_t(z)}{1-e^{-it}b_0(z)}.$$
It follows that $H(b)= gH_t$ and $R_t$ is unitarily equivalent to $(M_z,H(b_t))$.
\end{proof}

Now we can prove Theorem \ref{mainopmodel1}. We restate it here.
\begin{theorem}\label{mainopmodel}
Every quasi-analytic strict $2n$-isometry $A$ with $A^*A - I = w \otimes w$ is unitarily equivalent to $(M_z,H(b))$ for some rational function $b$ of degree $n$ with $b(0) = 0$.
\end{theorem}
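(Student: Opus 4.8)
The plan is to prove this by induction on $n$, using the rank one extension process. Since $A$ is a strict $2n$-isometry we have $n\ge1$, and by Theorem \ref{thm1} there is $\lambda\in\T$ with $(A^{\ast}-\lambda)^{n}w=0$; after replacing $A$ by $\overline{\lambda}A$ I would assume $\lambda=1$. By Lemma \ref{decomsftnis}(i), $\HM:=[\ran\Delta]_{A^{\ast}}=H_{w}$ has dimension $n$, so quasi-analyticity gives $A|\HM^{\perp}\cong S=(M_{z},H^{2})$; consequently, as in (\ref{matrixT}) and the discussion of (a1)--(a5), $A$ may be realized as $T_{n}$ at the top of a tower $T_{0}=S,T_{1},\dots,T_{n}=A$ with $T_{j}\in B(H^{2}\oplus\C^{j})$, each $T_{j}$ ($1\le j\le n$) a strict $2j$-isometry with $T_{j}^{\ast}T_{j}-I=w_{j}\otimes w_{j}$ and $(T_{j}^{\ast}-1)^{j}w_{j}=0$, with $T_{j+1}=\bigl[\begin{smallmatrix}T_{j}&B_{j}\\0&1\end{smallmatrix}\bigr]$ and $\omega_{j+1}=\langle w,e_{j+1}\rangle\ne0$ for $0\le j<n$ (a vanishing $\omega_{j+1}$ would make $B_{j}=0$, so $T_{j+1}$ could not be the strict $2(j+1)$-isometry required in (a1)). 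I would then show by induction on $j$ that $T_{j}$ is unitarily equivalent to $(M_{z},H(b^{(j)}))$ for a rational, \emph{nonextreme} function $b^{(j)}$ of degree $j$ with $b^{(j)}(0)=0$; the case $j=n$ is the theorem.

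The base case $j=0$ is immediate, since $T_{0}=S=(M_{z},H^{2})=(M_{z},H(0))$. For the inductive step I would put $b_{0}:=b^{(j)}$, $H:=H(b_{0})$ and first check that $H$ satisfies the standing hypotheses preceding Lemma \ref{uniteqo}: $H$ is forward shift invariant (as $b_{0}$ is nonextreme), $b_{0}(0)=0$ gives $K_{0}^{b_{0}}\equiv1$, and by Lemma \ref{tstmiro} and (\ref{normofwib}) the operator $T:=(M_{z},H)$ satisfies $T^{\ast}T=I+w'\otimes w'$ with $w'=-\sqrt{1+\|b_{0}\|^{2}}\,Lb_{0}$, $\|w'\|^{2}=\|b_{0}\|^{2}$, $\dim\ker T^{\ast}=1$ (spanned by the constant $1$, by Lemma \ref{kernel}), and $b_{0}=\frac{-1}{\sqrt{1+\|w'\|^{2}}}Tw'$ because $TLb_{0}=b_{0}$. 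Transporting the abstract extension through a unitary $U$ with $UT_{j}U^{\ast}=T$ (which carries $w_{j}$ to $w'$ up to a unimodular scalar), $T_{j+1}$ becomes $\bigl[\begin{smallmatrix}T&\widetilde B_{j}\\0&1\end{smallmatrix}\bigr]$ on $H\oplus\C$, where by (a3)--(a4) the vector $\widetilde B_{j}1$ satisfies $T^{\ast}\widetilde B_{j}1=\overline{\omega_{j+1}}w'$ and $\|\widetilde B_{j}1\|^{2}=|\omega_{j+1}|^{2}$; by the equivalence established in the proof of Lemma \ref{Bt}, these are exactly the identities forcing $\widetilde B_{j}=B_{t}$, in the notation of Lemma \ref{uniteqo}, for some $t\in[0,2\pi)$ and $\omega=\omega_{j+1}\ne0$. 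Hence $T_{j+1}\cong R_{t}$.

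To complete the step I would apply Theorem \ref{unitequirt}, whose only hypothesis beyond ``$b_{0}$ rational with $b_{0}(0)=0$'' is $e^{-it}b_{0}(1)\ne1$. For $j=0$ this is automatic, since $b_{0}=0$. For $j\ge1$, by the inductive construction $b_{0}=b^{(j)}$ satisfies $b^{(j)}(1)=1$ (the value recorded just after (\ref{btdebbzas})), so the only excluded $t$ is $t=0$; and by Lemma \ref{newkernel2}(ii) this $t=0$ is also the unique value for which the extension $R_{t}$ fails to be a strict $2(j+1)$-isometry (equivalently, $\frac{1-e^{-it}b_{0}(z)}{z-1}\in H$, so the kernel formula of Lemma \ref{uniteqo} degenerates). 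Since $T_{j+1}$ \emph{is} strict $2(j+1)$, the value $t$ with $T_{j+1}\cong R_{t}$ is not the excluded one, so Theorem \ref{unitequirt} yields $R_{t}\cong(M_{z},H(b_{t}))$ with $b_{t}$ given by (\ref{btdebbzas}). Setting $b^{(j+1)}:=b_{t}$: the properties recorded after (\ref{btdebbzas}) give $b^{(j+1)}(0)=0$ and $b^{(j+1)}(1)=1$; combining numerator and denominator in (\ref{btdebbzas}) shows $b^{(j+1)}$ is rational of degree $\deg b_{0}+1=j+1$; and $b^{(j+1)}$ is nonextreme because $\operatorname{Re}H_{s,t}>0$ on $\T$ off the finite set where $|b_{0}|=1$, whence $|b^{(j+1)}|<1$ a.e.\ on $\T$. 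This closes the induction, and $j=n$ gives the theorem.

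I expect the main obstacle to lie in the bookkeeping of the inductive step rather than in any single deep estimate: one must match the operator $T_{j+1}$ produced abstractly by (a1)--(a5) with the concrete $R_{t}$ built over $H(b^{(j)})$ (identifying $\ker T^{\ast}$ with the constants, and pinning down the correct $t$ with $\omega\ne0$), and --- most delicately --- check that the unique $t$ for which Theorem \ref{unitequirt} is inapplicable is the same as the unique $t$ of Lemma \ref{newkernel2}(ii) that produces a non-strict extension, so that the strictness of $T_{j+1}$ actually rules it out. After that one only has to verify that the new symbol $b^{(j+1)}=b_{t}$ is again rational, nonextreme, vanishes at $0$, and has degree exactly one larger, so that Lemma \ref{uniteqo} and Theorem \ref{unitequirt} can be reapplied at the next level.
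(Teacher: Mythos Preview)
Your approach is exactly the paper's: realize $A$ as the top of the rank-one extension tower $T_0=S,\dots,T_n=A$ of (a1)--(a5), and climb by induction using Lemma~\ref{Bt} to identify each $T_{j+1}$ with some $R_t$ over $H(b^{(j)})$, then invoke Theorem~\ref{unitequirt}.  The base case, the transport of $T_{j+1}$ through a unitary to $R_t$, and the verification that $b^{(j+1)}=b_t$ is rational of degree $j{+}1$, nonextreme, with $b_t(0)=0$ and $b_t(1)=1$, are all as in the paper.

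There is one genuine gap.  You assert that ``by Lemma~\ref{newkernel2}(ii) this $t=0$ is also the unique value for which the extension $R_t$ fails to be a strict $2(j{+}1)$-isometry.''  Lemma~\ref{newkernel2}(ii) only gives \emph{existence and uniqueness} of such a $t_0$; it does not identify $t_0$ with the unique $t$ satisfying $e^{-it}b_0(1)=1$.  Your parenthetical ``equivalently, $\frac{1-e^{-it}b_0(z)}{z-1}\in H$'' is a heuristic, not an argument: nowhere in the paper is that membership shown to be equivalent to non-strictness of $R_t$.  The paper closes this gap with the short contradiction argument you omit: if $e^{-it_0}b_0(1)\neq1$, then Theorem~\ref{unitequirt} applies at $t_0$, so $R_{t_0}\cong(M_z,H(b_{t_0}))$ with $b_{t_0}$ rational of degree $j{+}1$, hence (by \cite[Theorem~1.1]{LGR}) a strict $2(j{+}1)$-isometry --- contradicting the choice of $t_0$.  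Once you insert this paragraph, your proof coincides with the paper's.
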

\begin{proof}
By the discussion before Lemma \ref{kernel}, we know that $A$ is obtained through a sequence of operators $T_0 = S, T_1, \cdots, T_n = A$ satisfying conditions $(a1)-(a5)$. Note that each $T_{j+1}$ is a quasi-analytic strict $2(j+1)$-isometry with $T_{j+1}^*T_{j+1} - I = w_{j+1} \otimes w_{j+1}$, and $T_{j+1}=\left[\begin{matrix} T_j &B_j\\0&1\end{matrix}\right]$ is a rank one extension of $T_j, 0 \leq j <n$. Thus if we show $T_{j+1}$ is unitarily equivalent to $(M_z,H(b))$ for some rational function $b$ of degree $(j+1)$ with $b(0) = 0$ under the condition that $T_{j}$ is unitarily equivalent to $(M_z,H(b_0))$ for some rational function $b_0$ of degree $j$ with $b_0(0) = 0$, then we are done.

By \cite[Proposition 5.6]{AS956} $T_1$ is unitarily equivalent to a Brownian shift
$$\mathbf{B}=\left[\begin{matrix}
S & \sigma(1\otimes1)\\
0 & 1%
\end{matrix} \right], \sigma > 0.
$$
Since $\mathbf{B}^{\ast}\mathbf{B}-I = \left[
\begin{matrix}
0 & 0\\
0 & \sigma^{2}%
\end{matrix}
\right]$, using (\ref{normofwib}) one checks that $\mathbf{B}$ is unitarily equivalent to $(M_z,H(b))$ with
$$b(z) = \frac{\gamma z}{1-\beta z}, a(z) = a(0) \frac{1-z}{1-\beta z}, \beta = \frac{1}{1+\sigma^2}, |\gamma| = \frac{\sigma^2}{1+\sigma^2},$$
This can also be deduced from (\ref{btdebbzas}) by setting $b_0 = 0, s = \frac{\sigma^2}{1+\sigma^2}$.
Thus we only need to consider $j \geq 1$, and without loss of generality, suppose $T_j = (M_z,H(b_0))$ for some rational function $b_0$ of degree $j$ with $b_0(0) = 0$. By Lemma \ref{Bt}, there is $t \in [0,2\pi)$ such that
$$T_{j+1} = R_t =\left[\begin{matrix} T_j &B_t\\0&1\end{matrix}\right]  \text{ on }H(b_0)\oplus\C.$$
By the remark below Lemma \ref{newkernel2}, there is a unique $t_0 \in [0,2\pi)$ such that $R_{t_0}$ is not a strict $2(j+1)$-isometry. We claim that $e^{-it_0}b_0(1) = 1$. In fact, if $e^{-it_0}b_0(1) \neq 1$, then the function $b_{t_0}$ defined by (\ref{btdebbzas}) is a rational function of degree $(j+1)$. Then by Theorem \ref{unitequirt}, $R_{t_0}$ is unitarily equivalent to $(M_z,H(b_{t_0}))$, which is a strict $2(j+1)$-isometry. This is a contradiction. Hence $e^{-it_0}b_0(1) = 1$. Since $T_{j+1}$ is a strict $2(j+1)$-isometry, there is $t \in [0,2\pi)\setminus{t_0}$ such that $T_{j+1} = R_{t}$. Then $e^{-it}b_0(1) \neq 1$ and $T_{j+1} = R_{t}$ is unitarily equivalent to $(M_z,H(b_{t}))$, where $b_{t}$ is defined by (\ref{btdebbzas}). The proof is complete.
\end{proof}

\section{Invariant subspaces of $M$-isometry}
In this section we study the invariant subspaces of the $2n$-isometric forward shift operator on the de Branges-Rovnyak space. There is a beautiful link between the strict $2n$-isometry $(M_z,H(b))$ and the higher order local Dirichlet integral $D_w^n(f), w\in \T, f \in H^2$, see \cite[Theorem 1]{LGR}. When $n =1$, $D_w^1(f)=D_w(f)$ is the local Dirichlet integral introduced in \cite{RS91}.
\subsection{Invariant subspaces of the $2n$-isometric shift operator}
Let $T=(M_z,H(b))$ be a strict $2n$-isometry on $H(b)$ and $a$ the pythagorean mate of $b$. The characterization of such $b$ is obtained in \cite{KZ15} for $n=1$, and in \cite{LGR} for general $n$. By \cite[Theorem 1.1]{LGR}, $b$ is a rational function of degree $n$ such that the mate $a$ has a single zero of multiplicity $n$ at some point $\overline{\lambda} \in \T$.
Since $b$ are rational
functions of order $n$ with $\left\Vert b\right\Vert _{\infty}\leq1$, we have $M(\overline{a}) = H(b)$ (\cite{Sa86}), where $M(\overline{a}) = T_{\overline{a}}H^2$.
Thus all the $H(b)$ spaces are equal with equivalent norms. Let $e_n(z) = \frac{(1-\lambda z)^{n}}{2^{n}}$. Since $e_{n}/a$ and
$a/e_{n}$ are bounded on $\mathbb{T}$, we also have $M(\overline{e_{n}}) =
M(\overline{a}) = H(b)$ (\cite{FHR16}). Let $S_{n} = (M_z,M(\overline
{e_n}))$. Then the $T$-invariant subspaces of $H(b)$ are just the $S_n$-invariant subspaces of $M(\overline{e_{n}})$.  When $n =1, \lambda= 1$, the $S_{1}$-invariant subspaces
of $M(\overline{e_{1}})$ were characterized in \cite{Sa86}. In this case,
$M(\overline{e_{1}}) = M(e_1)H^2 \dotplus\mathbb{C} = D(\delta_{1})$
(\cite{RS91}), and the $S_{1}$-invariant subspaces of
$M(\overline{e_{1}})$ are $\{0\}, M(e_{1}) \cap M(\theta), M(\overline{e_{1}})
\cap M(\theta)$ with $\theta$ an inner function (\cite[Theorem 7]{Sa86}). We will show that a similar characterization exists for all $n \geq 1$.

By the above discussion, we may suppose $a(z) = \frac{(1- \lambda z)^{n}}{2^{n}}$, and $b$ is the pythagorean mate of $a$. Then by Fej\'{e}r-Riesz theorem (\cite{RN90}), $b$ is a polynomial of degree $n$. For simplicity, we take $\lambda = 1$ in the following. It was shown in \cite{FM08} that every function
in $H(b)$, as well as its derivative up to order $n-1$ has a finite
nontangential limit at $1$. For $w \in\mathbb{D}, i \in\mathbb{N}$, we define
\[
u_{w}^{i}(z) = \frac{\partial^{i}}{\partial\overline{w}^{i}}K_{w}^{b}(z) =
\frac{\partial^{i}}{\partial\overline{w}^{i}} \left( \frac{1-\overline
{b(w)}b(z)}{1-\overline{w}z} \right) , \quad z \in\mathbb{D},
\]
and
\begin{align*}
u_{1}^{i}(z) = \angle\lim\limits_{w \rightarrow1} u_{w}^{i}(z), \quad z
\in\mathbb{D}%
\end{align*}
the nontangential limit as $w \rightarrow 1$. Let $\mathcal{P}_{n}$ be
the space of all polynomials with degree less than or equal to $n$. By \cite[Theorems 1.3 \&
1.4]{FHR16}, see also \cite[Theorem 7.2]{LGR}, we have
\[
H(b) = M(a) \oplus\text{\text{Span}} \{u_{1}^{i}: 0 \leq i \leq n-1\}= M(a)
\oplus\mathcal{P}_{n-1},
\]
where the orthogonal decomposition is with respect to the inner product in
$H(b)$. Let
\[
\mathcal{L}_{0} = \text{\text{Span}} \{u_{1}^{i}: 0 \leq i \leq n-1\},
\]
and
\begin{align*}
\mathcal{L}_{j} = \mathcal{L}_{0} \cap\left( \text{\text{Span}} \{u_{1}^{i}: 0
\leq i < j\}\right) ^{\perp}, \quad1\leq j \leq n-1.
\end{align*}
Since $\langle f, u_1^i\rangle = f^{(i)}(1)$, it follows that
\begin{align}\label{speceLj}
\mathcal{L}_{j} = \text{\text{Span}}\{(z-1)^{j}, (z-1)^{j+1}, \ldots, (z-1)^{n-1}%
\},\quad0 \leq j \leq n-1.
\end{align}

Let
$$\text{Mult}(H(b)) = \{\varphi \in H(b): \varphi f \in H(b), \forall f \in H(b)\}$$
be the multiplier algebra of $H(b)$. The following lemma is contained in \cite{FHR19}, we include a proof here for completeness.
\begin{lemma}
\label{multohb} Let $a(z) = \frac{(1- z)^{n}}{2^{n}}$ and $b$ the pythagorean mate of $a$. Then $\text{Mult}(H(b))$ $= H(b)
\cap H^{\infty}$.
\end{lemma}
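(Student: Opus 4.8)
The inclusion $\text{Mult}(H(b)) \subseteq H(b) \cap H^\infty$ is the easy direction: if $\varphi$ is a multiplier, then $\varphi \cdot 1 = \varphi \in H(b)$ (since $1 \in H(b)$, as $b(0)=0$ forces the constant function into $H(b)$), and the multiplication operator $M_\varphi$ is bounded on $H(b)$ by the closed graph theorem, which gives $\|\varphi\|_\infty \le \|M_\varphi\|$ via the standard reproducing-kernel estimate $|\varphi(\lambda)| = |\langle M_\varphi^* K_\lambda^b, K_\lambda^b\rangle|/\|K_\lambda^b\|^2 \le \|M_\varphi\|$. So $\varphi \in H(b) \cap H^\infty$.

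For the reverse inclusion, suppose $\varphi \in H(b) \cap H^\infty$ and $f \in H(b)$; I must show $\varphi f \in H(b)$. The plan is to use the identification $H(b) = M(\overline{a}) = T_{\overline a} H^2$ with equivalent norms, valid here because $a(z) = (1-z)^n/2^n$ and $b$ is a polynomial, so $|a|$ and $|e_n|$ are comparable on $\T$. A function $g$ lies in $M(\overline a)$ exactly when $g = T_{\overline a} h$ for some $h \in H^2$, equivalently (since $a$ is rational with no zeros in $\D$) when $g \in H^2$ and $g/a \in H^2$ — more precisely $\overline a g \in H^2$, i.e. $ag$ extends to an $H^2$ function after removing the boundary singularity; the cleanest formulation is that $H(b) = \{g \in H^2 : \|g\|_{H(b)}^2 \simeq \int_\T |g/a|^2 < \infty\}$ modulo the polynomial correction $\mathcal{P}_{n-1}$ appearing in the decomposition $H(b) = M(a) \oplus \mathcal{P}_{n-1}$. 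Write $f = a f_1 + p$ with $f_1 \in H^2$ and $p \in \mathcal{P}_{n-1}$; then $\varphi f = a(\varphi f_1) + \varphi p$. Since $\varphi \in H^\infty$, $\varphi f_1 \in H^2$, so $a(\varphi f_1) \in M(a) \subseteq H(b)$. It remains to show $\varphi p \in H(b)$ for each polynomial $p$ of degree $\le n-1$, and by linearity for each monomial $z^k$, $0 \le k \le n-1$; and since $H(b)$ is $M_z$-invariant (and $z^k \varphi = M_z^k \varphi$ with $\varphi \in H(b)$) this is immediate. Hence $\varphi f \in H(b)$.

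The one point requiring care — and the main obstacle — is justifying the norm-comparison description of $M(\overline a)$ and the splitting $f = af_1 + p$ cleanly enough to conclude $a(\varphi f_1) \in H(b)$ with control; that is, one needs that multiplication by $a$ maps $H^2$ \emph{boundedly into} $H(b)$ (equivalently $M(a) \subseteq H(b)$ with $\|ah\|_{H(b)} \simeq \|h\|_{H^2}$), which is exactly the content of $H(b) = T_{\overline a} H^2$ together with $H(b) = M(\overline{e_n})$ and the boundedness of $e_n/a$, $a/e_n$ cited from \cite{Sa86,FHR16}. Once that structural fact is in hand, the argument is just bookkeeping: decompose via $H(b) = M(a) \oplus \mathcal{P}_{n-1}$, push $\varphi$ through the $M(a)$ part using $\varphi \in H^\infty$, and handle the finite-dimensional $\mathcal{P}_{n-1}$ part using $M_z$-invariance of $H(b)$ together with $\varphi \in H(b)$. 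I would present the proof in this order: (1) easy inclusion via closed graph plus kernel estimate; (2) recall $H(b) = M(\overline a)$ and the decomposition $H(b) = M(a)\oplus\mathcal{P}_{n-1}$; (3) for $\varphi \in H(b)\cap H^\infty$ and $f \in H(b)$, write $f = af_1 + p$, observe $a\varphi f_1 \in M(a)$ and $\varphi p \in H(b)$ by $M_z$-invariance, conclude.
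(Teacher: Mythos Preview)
Your proposal is correct and follows essentially the same route as the paper: use the decomposition $H(b)=M(a)\oplus\mathcal{P}_{n-1}$, write $f=af_1+p$, observe $a(\varphi f_1)\in M(a)\subseteq H(b)$ since $\varphi\in H^\infty$, and handle $\varphi p$ via $M_z$-invariance of $H(b)$ together with $\varphi\in H(b)$. The paper's version is terser---it dispatches the easy inclusion in one clause (``since $H(b)$ is a reproducing kernel Hilbert space'') and does not dwell on norm control, since only set membership is needed for the lemma as stated---so your extended discussion of norm comparisons and ``the main obstacle'' is unnecessary here, but the argument itself is the same.
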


\begin{proof}
Since $H(b)$ is a reproducing kernel Hilbert space, we have $\text{Mult}(H(b))$
$\subseteq H(b) \cap H^{\infty}$. Conversely, let $\varphi\in H(b) \cap H^{\infty}$.
Let $f \in H(b)$, there are $g \in H^{2}, p \in\mathcal{P}_{n-1}$ such
that $f = a g + p$. So
\[
\varphi f = a (\varphi g) + p \varphi.
\]
Since $H(b)$ is $T$-invariant and $\varphi\in H(b) \cap H^{\infty}$, we obtain
$p \varphi\in H(b)$ and $\varphi g \in H^{2}$. Hence $\varphi f \in H(b)$ and
$\varphi\in\text{Mult}(H(b))$. The proof is complete.
\end{proof}

Let $\theta$ be an inner function. Then $M(a) \cap M(\theta)$ is a $T$-invariant subspace of $H(b)$. It is not hard to verify that $M(a) \oplus\mathcal{L}_{j},
0 \leq j \leq n-1,$ are $T$-invariant subspaces of $H(b)$, or see the following Lemma \ref{intersection}. If furthermore, $\theta\left( M(a) \oplus\mathcal{L}_{j}\right)  \subseteq H(b)$ for some $j \in \{0, \ldots, n-1\}$, then since $T_{\overline{\theta}}$ is a contraction on $H(b)$, we have $\theta\left( M(a) \oplus\mathcal{L}_{j}\right)$ is closed, and thus a $T$-invariant subspace of $H(b)$.
Now we can describe the $T$-invariant subspaces of $H(b)$.
\begin{theorem}\label{invsubsftni}
Let $a, b$ be as in the above lemma. Then the $T$-invariant subspaces of $H(b)$ are $\{0\}$, $M(a) \cap M(\theta)$ with $\theta$ an inner function, and $\theta\left( M(a) \oplus\mathcal{L}_{j}\right)$, where $0 \leq j \leq n-1$, $\theta$ an inner function are such that $\theta\left( M(a) \oplus\mathcal{L}_{j}\right)  \subseteq H(b)$.
\end{theorem}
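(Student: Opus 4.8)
The strategy is to reduce the general case to the known $n=1$ result (Sarason's Theorem 7 in \cite{Sa86}) by an induction on $n$, using the layer structure $H(b) = M(a) \oplus \mathcal{L}_0$ and the fact that $\mathcal{L}_j = \mathrm{Span}\{(z-1)^j, \ldots, (z-1)^{n-1}\}$. First I would set up the induction: let $M \subseteq H(b)$ be a nonzero $T$-invariant subspace, and let $j$ be the largest index in $\{0,1,\ldots,n\}$ such that $M \subseteq M(a) \oplus \mathcal{L}_j$ (with the convention $\mathcal{L}_n = \{0\}$, so $M(a)\oplus\mathcal{L}_n = M(a)$). If $j = n$, then $M \subseteq M(a)$, and since $M(a) = aH^2$ is (after dividing by $a$) just a forward-shift invariant subspace of $H^2$, Beurling's theorem gives $M = M(a)\cap M(\theta) = aM(\theta)$ for some inner $\theta$ — one must check $aM(\theta)$ is closed in the $H(b)$ norm, which follows because $T_{\bar a}$ maps $H(b)$ contractively (indeed $M(\bar a) = H(b)$ with equivalent norms) and $\bar a \cdot aM(\theta) = M(\theta)$. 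This handles the first family.

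**The inductive step.** Now suppose $j < n$. The choice of $j$ means $M \subseteq M(a)\oplus\mathcal{L}_j$ but $M \not\subseteq M(a)\oplus\mathcal{L}_{j+1}$, so some $f \in M$ has a nonzero component along $(z-1)^j$ in the decomposition; equivalently, writing $f = ag + p$ with $p \in \mathcal{P}_{n-1}$, the coefficient of $(z-1)^j$ in $p$ is nonzero while the coefficients of $(z-1)^i$ for $i < j$ all vanish. I would then argue that $(z-1)^j\theta \in \overline{M}$ for a suitable inner $\theta$: applying powers of $T$ (multiplication by $z$) and taking appropriate limits / linear combinations, one extracts from $M$ the "leading" behavior at $1$, namely a function of the form $(z-1)^j h$ with $h$ having no zero at $1$ inside $H(b)$; the shift-invariant structure then forces the inner part of $h$ to divide everything. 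More precisely, I expect the cleanest route is: the space $M(a)\oplus\mathcal{L}_j$ is itself unitarily equivalent (via division by $(z-1)^j$, which is a multiplier-type identification) to an $H(b')$-type space with pythagorean mate having a zero of multiplicity $n-j$ at $1$ — i.e., a strict $2(n-j)$-isometry model — and $M/(z-1)^j$ (suitably interpreted) becomes a $T$-invariant subspace there not contained in the next layer. Then either $n-j$ is small enough to invoke the inductive hypothesis, or one observes $M$ corresponds to a subspace of the form $M(a')\cap M(\theta)$ one layer down, which pulls back to $(z-1)^j\theta \cdot (\text{stuff})$; in all cases $M = \theta(M(a)\oplus\mathcal{L}_j)$ for some inner $\theta$ with $\theta(M(a)\oplus\mathcal{L}_j)\subseteq H(b)$, the closedness again coming from contractivity of $T_{\bar\theta}$.

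**The main obstacle.** The genuinely hard part is showing that once $M$ reaches down into layer $\mathcal{L}_j$ at all, it must contain the \emph{entire} translated layer $\theta(M(a)\oplus\mathcal{L}_j)$ — i.e., that there is no "intermediate" invariant subspace strictly between two consecutive $\theta$-translated layers. This is where the $2n$-isometric (rather than merely expansive) structure must be used decisively: the finite-rank defect $T^*T - I = w'\otimes w'$ with $(T^*-1)^n w' = 0$ constrains the cyclic subspaces $[(z-1)^j\theta]_T$ very rigidly. I would establish this by computing $[(z-1)^j\theta]_T$ explicitly — showing $[(z-1)^j\theta]_T = \theta(M(a)\oplus\mathcal{L}_j)$ using Lemma \ref{multohb} (so $\theta$ is a genuine multiplier when the containment holds) and the explicit form \eqref{speceLj} of the layers, together with the observation that $(z-1)\cdot(z-1)^i = (z-1)^{i+1}$ moves one up the $\mathcal{L}$-filtration — and then showing any $f \in M$ with nonvanishing $(z-1)^j$-component has $[f]_T \supseteq [(z-1)^j\theta_0]_T$ for $\theta_0$ the inner part governing $f$, while conversely $M \subseteq M(a)\oplus\mathcal{L}_j$ bounds it from above; a dimension/Fredholm count on the quotients $(M(a)\oplus\mathcal{L}_j)\ominus (M(a)\oplus\mathcal{L}_{j+1}) \cong \mathbb{C}$ then squeezes $M$ to exactly the translated layer. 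Finally, I would note the degenerate cases $\theta \equiv 1$ (giving $M(a)\oplus\mathcal{L}_j$ itself) and $j=0, \theta$ arbitrary (giving $\theta H(b)$ when that lies in $H(b)$) are subsumed, and that Theorem \ref{invsinggtn1} is the special case where one does not normalize $\lambda = 1$, simply writing $(z-\overline\lambda)^j$ for $(z-1)^j$ and absorbing $M(a)\oplus\mathcal{L}_j$ into the cyclic subspace notation $[(z-\overline\lambda)^j\theta]_T$.
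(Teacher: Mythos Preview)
Your strategy differs substantially from the paper's, and the inductive reduction via ``division by $(z-1)^j$'' has real gaps.

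The paper argues directly, with no induction on $n$. If $\mathcal{N}\subseteq M(a)$, Beurling's theorem (via the unitary $T_a:H^2\to M(a)$) gives $\mathcal{N}=M(a)\cap M(\theta)$ --- this matches your case $j=n$. If $\mathcal{N}\not\subseteq M(a)$, the key device is the \emph{greatest common inner divisor} $\theta$ of the functions in $\mathcal{N}$. One first applies Beurling to $\mathcal{N}\cap M(a)$ to get $\mathcal{N}\cap M(a)=M(a)\cap M(u)$ for some inner $u$; the decisive observation is that for every $f\in\mathcal{N}$ one has $af\in\mathcal{N}\cap M(a)$, whence $u\mid f$ (since $a$ is outer), so $u$ divides the gcd $\theta$, and conversely $au\in\mathcal{N}$ forces $\theta\mid u$. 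Thus $u=\theta$. If $\theta=1$ this yields $M(a)\subseteq\mathcal{N}$, and since the quotient $H(b)/M(a)\cong\mathcal{L}_0$ is $n$-dimensional with $T^*$ acting as a single Jordan block (from $T^*u_1^i=u_1^i+iu_1^{i-1}$), the only $T$-invariant subspaces containing $M(a)$ are the $M(a)\oplus\mathcal{L}_j$. If $\theta\ne1$, set $\mathcal{R}=T_{\bar\theta}\mathcal{N}$, check $\mathcal{N}=\theta\mathcal{R}$ and $M(a)\subseteq\mathcal{R}$, observe $\mathcal{R}$ is closed (as the preimage of a subspace of the finite-dimensional $H(b)/M(a)$), and reduce to the previous case.

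Your proposal has two gaps. First, the inductive step requires multiplication by $(z-1)^j$ to be a bicontinuous, shift-intertwining bijection from some $H(b')$ onto $M(a)\oplus\mathcal{L}_j$ with the $H(b)$-norm; the set-theoretic identity is clear, but you have not established the norm comparison, and nothing in the paper supplies it. Second --- and this is the more serious issue --- you never explain how the inner function $\theta$ is produced from $\mathcal{N}$: the phrases ``a suitable inner $\theta$'' and ``the shift-invariant structure then forces the inner part of $h$ to divide everything'' are precisely the content of the paper's gcd argument, which you have not given. Your ``main obstacle'' (no intermediate invariant subspaces between consecutive layers) is not where the difficulty lies: once $M(a)\subseteq\mathcal{N}$ is known, this is the elementary Jordan-block fact about the $n$-dimensional quotient, and no $2n$-isometry machinery or Fredholm count is needed.
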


\begin{proof}
Let $\mathcal{N}$ be a nonzero $T$-invariant subspace of $H(b)$. We split the
proof in the following three cases.

$\mathbf{Case 1}$. $\mathcal{N}$ is contained in $M(a)$. Since $T_{a}: H^{2}
\rightarrow M(a)$ defined by $T_{a} f = af$ is a unitary operator. By
Beurling's theorem, $\mathcal{N} = a\theta H^{2} = M(a) \cap M(\theta)$ for
some inner function $\theta$.

$\mathbf{Case 2}$. $\mathcal{N}$ is not contained in $M(a)$ and the greatest
common inner divisor of the functions in $\mathcal{N}$ is $1$. By case 1,
$\mathcal{N} \cap M(a) = M(a) \cap M(\theta)$ for some inner function $\theta
$. Note that $\forall f \in\mathcal{N}$, we have $a f \in\mathcal{N} \cap
M(a)$ and $af = \theta g$ for some $g \in H^{2}$. Since $a$ is an outer
function, we obtain that $\theta$ is an inner divisor of $f$. This is true for
all functions in $\mathcal{N}$, we then get that $\theta= 1$, and $\mathcal{N}
\cap M(a) = M(a)$, $M(a) \subsetneq\mathcal{N}$. Note that $M(a) \oplus\mathcal{L}_{j}, 0 \leq j \leq n-1$
are $n$ $T$-invariant subspaces containing $M(a)$ and there is no other
$T$-invariant subspace containing $M(a)$. So $\mathcal{N} = M(a)
\oplus\mathcal{L}_{j}$ for some $j \in\{0, 1, \cdots, n-1\}$.

$\mathbf{Case 3}$. $\mathcal{N}$ is not contained in $M(a)$ and the greatest
common inner divisor $\theta$ of the functions in $\mathcal{N}$ is not $1$. By
case 1, $\mathcal{N} \cap M(a) = M(a) \cap M(u)$ for some inner function $u$.
By the argument in case $2$, we see that $u$ is an inner divisor of $\theta$.
Let $au \in M(a) \cap M(u)$, then $au \in\mathcal{N}$ and $\theta$ is an inner
divisor of $au$. So $\theta$ is a divisor of $u$ and $\mathcal{N} \cap M(a) =
M(a) \cap M(\theta)$. It follows that $a \theta H^{2} \subseteq\mathcal{N}$.
Let $\mathcal{R} = T_{\overline{\theta}} \mathcal{N}$. It is not hard to check
that $\mathcal{N} = \theta\mathcal{R}$. Hence $T \mathcal{R} \subseteq
\mathcal{R}$ and
\[
M(a) = T_{\overline{\theta}} (a \theta H^{2}) \subseteq T_{\overline{\theta}}
\mathcal{N} = \mathcal{R}.
\]
Next we show $\mathcal{R}$ is closed. In fact, note that $M(a)$ is a subspace which is closed and of finite co-dimension in $H(b)$, thus
$\mathcal{R}$ is closed, as it is the pre-image of a closed subspace $\mathcal{R}/M(a)$ under the quotient mapping $H(b) \rightarrow H(b) / M(a)$.
Therefore $\mathcal{R}$ is a $T$-invariant subspace containing $M(a)$, and then $\mathcal{R} = M(a) \oplus\mathcal{L}_{j}$ for some $j \in \{0, \ldots, n-1\}$.
\end{proof}

In fact, if $\theta\left( M(a) \oplus\mathcal{L}_{j}\right)  \subseteq H(b)$ for some $0 \leq j \leq n-1$, we will show that
\begin{align*}
\left( M(a) \oplus\mathcal{L}_{k}\right)  \cap M(\theta) = \theta\left( M(a)
\oplus\mathcal{L}_{k}\right),  \quad j \leq k \leq n-1,
\end{align*}
and thus obtaining a similar characterization for the $T$-invariant subspaces of $H(b)$ as in \cite[Theorem 7]{Sa86}. To achieve this, we need the following lemmas.

\begin{lemma}
\label{invssfis} Let $\theta$ be an inner function, and $a, b$ be as in Lemma \ref{multohb}. Then $M(a) \cap M(\theta)$ and $\left( M(a)
\oplus\mathcal{L}_{j}\right)  \cap M(\theta)$, $0 \leq j \leq n-1,$ are
$T$-invariant subspaces of $H(b)$.
\end{lemma}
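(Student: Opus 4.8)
The plan is to show that both $M(a)\cap M(\theta)$ and $\bigl(M(a)\oplus\mathcal{L}_j\bigr)\cap M(\theta)$ are closed subspaces of $H(b)$ that are carried into themselves by $T=M_z$. Closedness is easy in the first case: $M(\theta)=\theta H^2$ is closed in $H^2$ and $M(a)=T_{\overline a}H^2$ is closed in $H(b)$ with an equivalent norm (since $a$ is rational of degree $n$, $H(b)=M(\overline a)$ with equivalent norms, as recalled before Lemma \ref{multohb}); hence their intersection is closed. For $M_z$-invariance of $M(a)\cap M(\theta)$, note that if $f\in M(a)\cap M(\theta)$, then $f=ag=\theta h$ with $g,h\in H^2$, so $zf=a(zg)=\theta(zh)$, whence $zf\in M(a)\cap M(\theta)$ — and $zf\in H(b)$ because $M(a)\subseteq H(b)$ is $T$-invariant. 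This disposes of the first assertion.

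For the second assertion I would first observe that $M(a)\oplus\mathcal{L}_j$ is itself $T$-invariant: this is the content of the remark preceding Theorem \ref{invsubsftni} (or follows directly from the computation that $M(a)\oplus\mathcal{L}_j$ is one of the finitely many $T$-invariant subspaces containing $M(a)$, by the argument of Case 2 there). Granting that, I would write $\mathcal{N}:=\bigl(M(a)\oplus\mathcal{L}_j\bigr)\cap M(\theta)$ and check the two required properties separately. For $T$-invariance: if $f\in\mathcal{N}$, then $zf\in M(a)\oplus\mathcal{L}_j$ since that space is $T$-invariant, and $zf\in M(\theta)=\theta H^2$ since $M(\theta)$ is $M_z$-invariant in $H^2$; both $f$ and $zf$ lie in $H(b)$, so $zf\in\mathcal{N}$. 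For closedness: $M(a)\oplus\mathcal{L}_j$ is closed in $H(b)$ (it is the orthogonal sum of the closed subspace $M(a)$ with the finite-dimensional space $\mathcal{L}_j$), and $M(\theta)\cap H(b)$ is closed in $H(b)$ because $T_{\overline\theta}$ is a contraction on $H(b)$ and $M(\theta)\cap H(b)=\ker\bigl(I-T_\theta T_{\overline\theta}\bigr)$ restricted appropriately — more simply, $f\in M(\theta)\cap H(b)$ iff $T_{\overline\theta}$ has a norm-preserving action, equivalently $f_n\to f$ in $H(b)$ with $f_n\in M(\theta)$ forces $f\in M(\theta)$ since $H(b)$-convergence implies $H^2$-convergence (the inclusion $H(b)\hookrightarrow H^2$ is bounded) and $M(\theta)$ is $H^2$-closed. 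Intersecting two closed subspaces of $H(b)$ gives a closed subspace, so $\mathcal{N}$ is closed.

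The main point requiring care — the only place where the special structure of $a$ and $b$ (rather than soft functional analysis) enters — is the claim that $M(a)\oplus\mathcal{L}_j$ is $M_z$-invariant, i.e.\ that applying $M_z$ to $u_1^i\in\mathcal{L}_j$ (equivalently to $(z-1)^i$ with $j\le i\le n-1$, using the identification \eqref{speceLj}) lands back in $M(a)\oplus\mathcal{L}_j$ rather than spilling into $\mathcal{L}_0\setminus\mathcal{L}_j$. Here one uses $z(z-1)^i=(z-1)^{i+1}+(z-1)^i$: the term $(z-1)^{i+1}$ is in $\mathcal{L}_{j+1}\subseteq\mathcal{L}_j$ when $i+1\le n-1$, and when $i=n-1$ one has $z(z-1)^{n-1}=(z-1)^n+(z-1)^{n-1}$ with $(z-1)^n\in M(a)$ (since $(z-1)^n=2^n a$ up to the normalization $a(z)=(1-z)^n/2^n$, possibly after multiplying by the appropriate outer unit — in any case $(1-z)^n/a$ is bounded above and below on $\mathbb T$, so $(z-1)^n\in M(a)$). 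Thus $z\mathcal{L}_j\subseteq M(a)\oplus\mathcal{L}_j$, as needed. Everything else is routine bookkeeping with the two equivalent Hilbert-space structures on $H(b)$, and I do not anticipate any genuine obstacle.
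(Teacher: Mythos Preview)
Your proof is correct and follows essentially the same plan as the paper: establish closedness, verify that $M(a)\oplus\mathcal L_j$ is $T$-invariant, and then intersect with $M(\theta)$. The one point of difference is the key computation. The paper works on the dual side, computing $T^*u_1^i = u_1^i + i\,u_1^{i-1}$ to see that $\text{Span}\{u_1^i:0\le i<j\}$ is $T^*$-invariant, whence its orthogonal complement $M(a)\oplus\mathcal L_j$ is $T$-invariant. You instead work directly with the polynomial basis via $z(z-1)^i=(z-1)^{i+1}+(z-1)^i$ and the observation that $(z-1)^n$ is a scalar multiple of $a$. The two computations are dual to one another and equally short; yours has the mild advantage of avoiding any appeal to boundary derivatives, while the paper's version makes the $T^*$-invariant filtration of $\mathcal L_0$ explicit, which is in the same spirit as the operator-theoretic set-up of Section~2.
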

\begin{proof}
It is clear that $M(a) \cap M(\theta)$ and $\left( M(a) \oplus\mathcal{L}%
_{j}\right)  \cap M(\theta), 0 \leq j \leq n-1,$ are closed subspaces and
$M(a) \cap M(\theta)$ is a $T$-invariant subspace. Next we show $\left( M(a)
\oplus\mathcal{L}_{j}\right)  \cap M(\theta), 0 \leq j \leq n-1,$ are
$T$-invariant. Let $f \in H(b)$, we have
\begin{align*}
\langle f, T^{*} u_{1}^{i}\rangle= \langle zf, u_{1}^{i}\rangle= if^{(i-1)}(1)
+ f^{(i)}(1) = \langle f, u_{1}^{i}+iu_{1}^{i-1}\rangle.
\end{align*}
So $T^{*} u_{1}^{i} = u_{1}^{i}+iu_{1}^{i-1}$ and $M(a) \oplus\mathcal{L}_{j},
0 \leq j \leq n-1,$ are $T$-invariant. Thus $\left( M(a) \oplus\mathcal{L}%
_{j}\right)  \cap M(\theta), 0 \leq j \leq n-1,$ are $T$-invariant.
\end{proof}

\begin{lemma}\label{intersection}
Let $\theta$ be an inner function, and $a, b$ be as in the above lemma. Then for $0\leq j \leq n-1$,
\begin{align*}
\left( M(a) \oplus\mathcal{L}_{j}\right)  \cap M(\theta) = M(a) \cap M(\theta) + \left( M(a) \oplus
\mathcal{L}_{j}\right)  \cap\left( \theta\mathcal{P}_{n-1}\right).
\end{align*}
\end{lemma}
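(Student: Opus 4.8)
The inclusion $\supseteq$ is immediate: since $M(a)\subseteq M(a)\oplus\mathcal{L}_j$ we have $M(a)\cap M(\theta)\subseteq(M(a)\oplus\mathcal{L}_j)\cap M(\theta)$, and since $\theta\mathcal{P}_{n-1}\subseteq\theta H^{2}=M(\theta)$ we have $(M(a)\oplus\mathcal{L}_j)\cap(\theta\mathcal{P}_{n-1})\subseteq(M(a)\oplus\mathcal{L}_j)\cap M(\theta)$; as the latter is a subspace, it contains the sum of these two pieces. All the content is therefore in the reverse inclusion, and the plan is to produce, for a given $h$ in $(M(a)\oplus\mathcal{L}_j)\cap M(\theta)$, an explicit splitting $h=h_1+h_2$ with $h_1\in M(a)\cap M(\theta)$ and $h_2\in(M(a)\oplus\mathcal{L}_j)\cap(\theta\mathcal{P}_{n-1})$.

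First I would fix $h\in(M(a)\oplus\mathcal{L}_j)\cap M(\theta)$ and write $h=\theta g$ with $g\in H^{2}$. The crucial point — and really the only nontrivial step — is that $g$ itself already lies in $H(b)$. Indeed, $T_{\overline{\theta}}(\theta g)=P(\overline{\theta}\theta g)=P(g)=g$, while $T_{\overline{\theta}}$ maps $H(b)$ contractively into itself (the fact invoked just before Theorem \ref{invsubsftni}); hence $g=T_{\overline{\theta}}h\in H(b)$. Thus one can divide $h$ by the inner function $\theta$ without leaving $H(b)$, even though $\theta$ need not be a multiplier of $H(b)$.

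Having placed $g$ inside $H(b)$, I would use the orthogonal decomposition $H(b)=M(a)\oplus\mathcal{P}_{n-1}$ recalled above to write $g=ak+q$ with $k\in H^{2}$ (so that $ak\in M(a)$) and $q\in\mathcal{P}_{n-1}$. Then set $h_1=a\theta k$ and $h_2=\theta q$, so that $h=\theta g=h_1+h_2$. Now $h_1=\theta(ak)=a(\theta k)$ lies in $\theta H^{2}=M(\theta)$ and in $aH^{2}=M(a)$, hence $h_1\in M(a)\cap M(\theta)$; in particular $h_1\in H(b)$, whence $h_2=h-h_1\in H(b)$, and obviously $h_2=\theta q\in\theta\mathcal{P}_{n-1}$. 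It remains to check $h_2\in M(a)\oplus\mathcal{L}_j$: letting $\pi\colon H(b)\to\mathcal{P}_{n-1}$ be the projection along $M(a)$ coming from the decomposition, $\pi$ vanishes on $M(a)$, so $\pi(h_2)=\pi(h)-\pi(h_1)=\pi(h)$, and $\pi(h)\in\mathcal{L}_j$ is precisely the hypothesis $h\in M(a)\oplus\mathcal{L}_j$. Therefore $h_2\in(M(a)\oplus\mathcal{L}_j)\cap(\theta\mathcal{P}_{n-1})$, and $h=h_1+h_2$ is the decomposition sought.

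I expect the only delicate ingredient to be the identity $g=T_{\overline{\theta}}h\in H(b)$; everything else is bookkeeping with the decomposition $H(b)=M(a)\oplus\mathcal{P}_{n-1}$ together with the trivial observation that $a\theta k=\theta(ak)=a(\theta k)$ belongs to $M(a)\cap M(\theta)$ for every $k\in H^{2}$. Note that the argument never requires $\theta$ to be a multiplier of $H(b)$, which is exactly why the lemma holds for an arbitrary inner function $\theta$.
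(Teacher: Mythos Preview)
Your proof is correct and follows exactly the same route as the paper's: write $h=\theta g$, use that $T_{\overline{\theta}}$ is a contraction on $H(b)$ to get $g\in H(b)$, decompose $g=ak+q$ via $H(b)=M(a)\oplus\mathcal{P}_{n-1}$, and split $h=a\theta k+\theta q$. The only difference is that you spell out explicitly, via the projection $\pi$, why $\theta q\in M(a)\oplus\mathcal{L}_j$, a step the paper leaves to the reader.
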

\begin{proof}
We only need to show that the space on the left is contained in the space on the right. Let $f \in \left( M(a) \oplus\mathcal{L}_{j}\right)  \cap M(\theta)$, then $f \in H(b)\cap M(\theta)$. Suppose $f = \theta g, g \in H^2$. Since $T_{\overline{\theta}}$ is a contraction on $H(b)$, we have $g \in H(b)$. Then there are $h \in H^2, p \in \HP_{n-1}$ such that $g = a h + p$. So $f = a\theta h + \theta p$, which belongs to the space on the right.
\end{proof}
Recall that $D(\delta_1) = (z-1)H^{2} \dotplus\mathbb{C}$ (\cite{RS91}). If $\theta$ is an inner function, then $\theta \in D(\delta_1)$, if and only if $\theta$ has a finite angular derivative at $1$ (\cite{Sa86, RS91}).
\begin{lemma}\label{thetadelta}
Let $a, b$ be as in the above lemma. Suppose $\theta$ is an inner function and $\theta \not \in (z-1)H^{2} \dotplus\mathbb{C}$. Then
$$\left( M(a) \oplus\mathcal{L}_{j}\right)  \cap M(\theta) = M(a) \cap M(\theta), \quad 0\leq j \leq n-1.$$
\end{lemma}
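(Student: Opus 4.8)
The plan is to collapse all $j$ to the case $j=0$, and then to rule out a nonzero polynomial summand by squeezing $\theta$ into $D(\delta_1)=(z-1)H^{2}\dotplus\C$. Since $M(a)\oplus\mathcal{L}_{j}\subseteq M(a)\oplus\mathcal{L}_{0}=H(b)$ and $M(a)\cap M(\theta)\subseteq (M(a)\oplus\mathcal{L}_{j})\cap M(\theta)$ in any case, it suffices to prove $H(b)\cap M(\theta)=M(a)\cap M(\theta)$. So I would take $f=\theta g\in H(b)\cap M(\theta)$ with $g\in H^{2}$; as $T_{\overline{\theta}}$ is a contraction on $H(b)$, $g=T_{\overline{\theta}}f\in H(b)$, and using $H(b)=M(a)\oplus\mathcal{P}_{n-1}$ write $g=ah+p$ with $h\in H^{2}$, $p\in\mathcal{P}_{n-1}$. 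Then $f=a\theta h+\theta p$ with $a\theta h\in M(a)\cap M(\theta)$, so the whole statement reduces to showing $p=0$.

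Suppose, for contradiction, $p\neq 0$, and factor $p=(z-1)^{m}\tilde p$ with $0\le m\le n-1$, $\tilde p\in\mathcal{P}_{n-1-m}$, $\tilde p(1)\neq 0$. Note $\theta p=f-a\theta h\in H(b)$; since $H(b)$ is $M_z$-invariant and $n-1-m\ge 0$, the function $F:=(z-1)^{n-1-m}(\theta p)=\theta(z-1)^{n-1}\tilde p$ lies in $H(b)$ as well. Writing $F=aF_{1}+P$ with $F_{1}\in H^{2}$, $P\in\mathcal{P}_{n-1}$, and using $a=(1-z)^{n}/2^{n}=(-1)^{n}(z-1)^{n}/2^{n}$, I would observe $P=F-aF_{1}=(z-1)^{n-1}\bigl[\theta\tilde p-(-1)^{n}2^{-n}(z-1)F_{1}\bigr]$, whose bracket lies in $H^{2}$; since $P$ is a polynomial of degree $\le n-1$ divisible (as a function in $H^{2}$) by $(z-1)^{n-1}$, this forces $P=c(z-1)^{n-1}$ for some $c\in\C$. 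Then $(z-1)^{n-1}(\theta\tilde p-c)=F-c(z-1)^{n-1}=aF_{1}=(-1)^{n}2^{-n}(z-1)^{n}F_{1}$, and dividing by $(z-1)^{n-1}$ gives $\theta\tilde p=c+(-1)^{n}2^{-n}(z-1)F_{1}\in \C\dotplus(z-1)H^{2}=D(\delta_{1})$; in particular $\theta\tilde p$ has nontangential limit $c$ at $1$.

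The remaining, elementary, step is to upgrade $\theta\tilde p\in D(\delta_{1})$ to $\theta\in D(\delta_{1})$. Since $\tilde p$ is continuous with $\tilde p(1)\neq 0$, $\theta=(\theta\tilde p)/\tilde p$ has nontangential limit $\tau:=c/\tilde p(1)$ at $1$, and from $\theta\tilde p-c=(\theta-\tau)\tilde p+\tau(\tilde p-\tilde p(1))$ I would divide through by $\zeta-1$ on $\T$: the left-hand side is in $L^{2}(\T)$ because $\theta\tilde p\in D(\delta_{1})$, and $\tau(\tilde p-\tilde p(1))/(\zeta-1)$ is a polynomial, so $(\theta-\tau)\tilde p/(\zeta-1)\in L^{2}(\T)$. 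Choosing $\delta>0$ with $|\tilde p|\ge |\tilde p(1)|/2$ on $\T$ near $1$ controls $\int_{|\zeta-1|<\delta}|\theta-\tau|^{2}/|\zeta-1|^{2}$, while $\int_{|\zeta-1|\ge\delta}|\theta-\tau|^{2}/|\zeta-1|^{2}\le 2\pi\delta^{-2}(1+|\tau|)^{2}<\infty$ because $|\theta|=1$ a.e.\ on $\T$; hence the local Dirichlet integral of $\theta$ at $1$ is finite, so $\theta\in (z-1)H^{2}\dotplus\C$ by the Richter--Sundberg description, contradicting the hypothesis. Thus $p=0$ and $f=a\theta h\in M(a)\cap M(\theta)$. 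The step I expect to need the most care is the middle one — extracting $\theta\tilde p\in D(\delta_{1})$ from $F\in H(b)$ — since it depends on the precise decomposition $H(b)=M(a)\oplus\mathcal{P}_{n-1}$ with $a$ comparable to $(z-1)^{n}$ and on tracking exactly which polynomial multiples of $\theta\tilde p$ land in $M(a)$; the surrounding algebra ($M_z$-invariance, the factorization $p=(z-1)^{m}\tilde p$, the final $L^{2}$ estimate) should be routine.
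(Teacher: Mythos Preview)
Your proof is correct. Both you and the paper make the same initial reduction: it suffices to show $\theta\mathcal{P}_{n-1}\cap H(b)=\{0\}$, which you reach via $T_{\overline{\theta}}$ being contractive on $H(b)$ (exactly the mechanism behind the paper's Lemma~\ref{intersection}).

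The difference is in how you extract $\theta\in D(\delta_1)$ from $\theta p\in H(b)$. The paper expands $p=\sum_{i}c_i(z-1)^i$ and peels off coefficients one at a time: since $H(b)\subseteq D(\delta_1)$ and all terms with $i\ge 1$ already lie in $(z-1)H^2$, one gets $c_0\theta\in D(\delta_1)$, hence $c_0=0$; dividing by $(z-1)$ lands back in $(z-1)^{n-1}H^2\dotplus\mathcal{P}_{n-2}\subseteq D(\delta_1)$, and one repeats. You instead ascend: factor $p=(z-1)^m\tilde p$ with $\tilde p(1)\neq 0$, multiply up by $(z-1)^{n-1-m}$, and read off from the decomposition $H(b)=(z-1)^nH^2\dotplus\mathcal{P}_{n-1}$ that $\theta\tilde p\in D(\delta_1)$; then a local $L^2$ estimate (together with Smirnov/Richter--Sundberg) strips off the nonvanishing polynomial $\tilde p$ to give $\theta\in D(\delta_1)$. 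The paper's descent is a bit slicker in that it never has to divide by a nonconstant $\tilde p$ and so avoids the boundary $L^2$ estimate entirely; your one-shot ascent trades that induction for a single explicit estimate, which is also perfectly clean once one notes (as you implicitly use) that $g\in H^2$ with $g/(\zeta-1)\in L^2(\mathbb{T})$ forces $g\in(z-1)H^2$ since $1/(1-z)$ is outer.
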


\begin{proof}
We first show that $\theta\mathcal{P}_{n-1} \cap H(b) = \{0\}$. If this holds, then
$$\left( M(a) \oplus \mathcal{L}_{j}\right)  \cap\left( \theta\mathcal{P}_{n-1}\right)\subseteq \theta\mathcal{P}_{n-1} \cap H(b) = \{0\}.$$
Lemma \ref{intersection} then ensures that the conclusion holds.

Now we prove $\theta\mathcal{P}_{n-1} \cap H(b) = \{0\}$. Note that $H(b) = (z-1)^n H^2 \dotplus \HP_{n-1} \subseteq D(\delta_1)$, and
$$\HP_{n-1} = \text{Span}\{1, (z-1), \ldots, (z-1)^{n-1}\} = \mathcal{L}_{0}.$$
If $n =1$, then since $\mathcal{P}_{0} = \C, H(b) = (z-1)H^{2} \dotplus\mathbb{C}$, we have $\theta\mathcal{P}_{n-1} \cap H(b) = \{0\}$. If $n \geq 2$, we proceed as follows.

Let $f \in \theta\mathcal{P}_{n-1} \cap H(b)$, then there are $c_0, \ldots, c_{n-1} \in \C$ such that
$$f =\sum_{i=0}^{n-1} c_i(z-1)^i \theta \in H(b) \subseteq D(\delta_1).$$
Since for $i \geq 1$, $(z-1)^i \theta \in (z-1)H^2 \subseteq D(\delta_1)$, we obtain that $c_0 \theta \in D(\delta_1)$, and so $c_0 = 0$. Then $f = \sum_{i=1}^{n-1} c_i(z-1)^i \theta \in H(b) = (z-1)^n H^2 \dotplus \HP_{n-1}$.
Suppose
$$f = \sum_{i=1}^{n-1} c_i(z-1)^i \theta = (z-1)^n g + p, \quad g \in H^{2}, p \in\mathcal{P}_{n-1}.$$
Then $p(1) = 0$, and
$$\sum_{i=1}^{n-1} c_i(z-1)^{i-1} \theta = (z-1)^{n-1} g + \frac{p}{z-1}.$$
Note that $(z-1)^{n-1} g + \frac{p}{z-1} \in (z-1)H^2  \dotplus\mathbb{C} = D(\delta_1)$, applying the same argument, we obtain that $c_1 = 0$. Similarly, we have $c_2 = 0, \ldots, c_{n-1} = 0$, and $f = 0$. The proof is complete.
\end{proof}

\begin{lemma}\label{invssfis2}
Let $\theta$ be an inner function, and $a, b$ be as in the above lemma. If $\theta\left( M(a) \oplus\mathcal{L}%
_{j}\right)  \subseteq H(b)$ for some $0 \leq j \leq n-1$, then
\begin{align*}
\left( M(a) \oplus\mathcal{L}_{k}\right)  \cap M(\theta) = \theta\left( M(a)\oplus\mathcal{L}_{k}\right),  \quad j \leq k \leq n-1.
\end{align*}

\end{lemma}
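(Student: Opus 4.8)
The plan is to prove the two inclusions of $(M(a)\oplus\mathcal{L}_k)\cap M(\theta)=\theta(M(a)\oplus\mathcal{L}_k)$ separately, working throughout with the concrete description $M(a)=(z-1)^nH^2$, the algebraic splitting $H(b)=(z-1)^nH^2\dotplus\mathcal{P}_{n-1}$, and the identity (a consequence of (\ref{speceLj}) and $\langle f,u_1^i\rangle=f^{(i)}(1)$)
\[
M(a)\oplus\mathcal{L}_i=\{f\in H(b):f^{(l)}(1)=0,\ 0\le l<i\},\qquad 0\le i\le n-1 .
\]
The only analytic input I would use is that a function $\phi$ on $\T$ which is bounded away from $0$ near the point $1$ satisfies $\phi/(z-1)^r\notin H^2$ for every $r\ge 1$, since $\int_{\T}|\phi(\zeta)|^2|\zeta-1|^{-2r}\,|d\zeta|=\infty$; this will be applied to polynomials not vanishing at $1$, and to products $\theta q$ with $q$ a polynomial not vanishing at $1$ (here using that $\theta$ is inner, so $|\theta|=1$ a.e.\ on $\T$). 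A first consequence: if $h\in H(b)\cap(z-1)^iH^2$ with $0\le i\le n-1$, then $h\in M(a)\oplus\mathcal{L}_i$ --- indeed, writing $h=(z-1)^ng+q$ (the $H(b)$-splitting) and $h=(z-1)^ig_0$ one gets $q/(z-1)^i=g_0-(z-1)^{n-i}g\in H^2$, which forces the polynomial $q$ to vanish to order $\ge i$ at $1$.

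For the inclusion $\theta(M(a)\oplus\mathcal{L}_k)\subseteq(M(a)\oplus\mathcal{L}_k)\cap M(\theta)$: since $k\ge j$, for each $i$ with $k\le i\le n-1$ we have $(z-1)^i\in\mathcal{L}_j$, so the hypothesis gives $\theta(z-1)^i\in\theta(M(a)\oplus\mathcal{L}_j)\subseteq H(b)$; as also $\theta(z-1)^i\in(z-1)^iH^2$, the consequence above yields $\theta(z-1)^i\in M(a)\oplus\mathcal{L}_i\subseteq M(a)\oplus\mathcal{L}_k$. Together with $\theta M(a)=a\theta H^2\subseteq M(a)$ this gives $\theta(M(a)\oplus\mathcal{L}_k)\subseteq M(a)\oplus\mathcal{L}_k$, while $\theta(M(a)\oplus\mathcal{L}_k)\subseteq\theta H(b)\subseteq M(\theta)$ is trivial. (Closedness of $\theta(M(a)\oplus\mathcal{L}_k)$ follows as usual, $f\mapsto\theta f$ being bounded below on $H(b)$ since $T_{\overline{\theta}}$ is a contraction and $T_{\overline{\theta}}(\theta f)=f$.)

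For the reverse inclusion I would invoke Lemma \ref{intersection} with $k$ in place of $j$: because $M(a)\cap M(\theta)=a\theta H^2=\theta M(a)\subseteq\theta(M(a)\oplus\mathcal{L}_k)$, it remains to prove $(M(a)\oplus\mathcal{L}_k)\cap(\theta\mathcal{P}_{n-1})\subseteq\theta(M(a)\oplus\mathcal{L}_k)$. So let $f=\theta p$ with $p\in\mathcal{P}_{n-1}$ and $f\in M(a)\oplus\mathcal{L}_k$, and split $p=p_0+p_1$, where $p_0$ collects the powers $(z-1)^m$ with $m<k$ (hence $p_0\in\mathcal{P}_{k-1}$) and $p_1\in\mathcal{L}_k$. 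Since $\theta$ carries each of $(z-1)^k,\dots,(z-1)^{n-1}$ into $M(a)\oplus\mathcal{L}_k$ (second paragraph), $\theta p_1\in M(a)\oplus\mathcal{L}_k$, so $\theta p_0=f-\theta p_1\in H(b)$ and $(\theta p_0)^{(l)}(1)=0$ for $0\le l<k$. Hence the $\mathcal{P}_{n-1}$-component $q$ of $\theta p_0$ vanishes to order $\ge k$ at $1$, say $q=(z-1)^kq_1$; then $\theta p_0=(z-1)^ng+(z-1)^kq_1$, whence $\theta p_0/(z-1)^k\in H^2$. If $p_0\ne 0$, writing $p_0=(z-1)^s\tilde p$ with $0\le s<k$ and $\tilde p(1)\ne 0$ turns this into $\theta\tilde p/(z-1)^{k-s}\in H^2$ with $k-s\ge 1$, contradicting the analytic input. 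Therefore $p_0=0$, $p=p_1\in\mathcal{L}_k$, and $f=\theta p\in\theta\mathcal{L}_k\subseteq\theta(M(a)\oplus\mathcal{L}_k)$, which finishes the reverse inclusion.

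The only genuine obstacle is the analytic input --- that $\theta q/(z-1)^r\notin H^2$ when $q(1)\ne 0$ and $r\ge 1$ --- and this is exactly where the assumption that $\theta$ is inner is used; everything else is bookkeeping with the splitting $H(b)=(z-1)^nH^2\dotplus\mathcal{P}_{n-1}$ together with Lemma \ref{intersection}. I would then record the degenerate cases ($j=0$, $k=j$, $k=n-1$), but the argument above handles them without change.
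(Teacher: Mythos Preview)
Your proof is correct and rests on the same two pillars as the paper's: Lemma~\ref{intersection} to reduce the reverse inclusion to $(M(a)\oplus\mathcal{L}_k)\cap\theta\mathcal{P}_{n-1}$, and the $L^2$-divergence at $z=1$ (here, that $|\theta\tilde p|=|\tilde p|$ a.e.\ on $\T$ forces $\theta\tilde p/(z-1)^r\notin H^2$ when $\tilde p(1)\neq0$ and $r\ge1$). Where you differ is in organization. The paper first locates $\theta$ precisely in the tower $(z-1)^{i-1}H^2\dotplus\mathcal{P}_{i-2}$, splits into the cases $n=1$, $\theta\notin H(b)$, $\theta\in H(b)$, and in each case computes $\theta\mathcal{P}_{n-1}\cap H(b)$ explicitly before intersecting with $M(a)\oplus\mathcal{L}_k$; along the way it invokes $\theta\in D(\delta_1)\Rightarrow\theta(1)\neq0$. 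You bypass all of this: both inclusions are handled uniformly for every $k\ge j$, you never need to know where $\theta$ sits in the tower or whether $\theta(1)$ exists as a nontangential limit, and your ``first consequence'' $H(b)\cap(z-1)^iH^2\subseteq M(a)\oplus\mathcal{L}_i$ dispatches the forward inclusion cleanly. The cost is that you do not recover the paper's side information $\theta\mathcal{P}_{n-1}\cap H(b)=\theta\mathcal{L}_{n-i+1}$; the gain is a shorter, case-free argument. One cosmetic point: when you write ``$\theta p_0=f-\theta p_1\in H(b)$'' you in fact have the stronger $\theta p_0\in M(a)\oplus\mathcal{L}_k$ (since both $f$ and $\theta p_1$ lie there), and it is this that gives $(\theta p_0)^{(l)}(1)=0$ for $l<k$ --- worth stating explicitly.
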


\begin{proof}
If $\theta\left( M(a) \oplus\mathcal{L}_{j}\right)  \subseteq H(b)$, then $\theta(z-1)^j \in H(b)=(z-1)^n H^2 \dotplus \HP_{n-1}$. So $\theta \in (z-1)^{n-j}H^2 \dotplus \HP_{n-j-1}$.

If $n=1$, then $j =0$, $\theta \in (z-1)H^2 \dotplus \C = H(b)$. So $\theta$ is a multiplier of $H(b) = M(a) \oplus\mathcal{L}_{0} = M(a) \oplus \C$.
By Lemma \ref{intersection},
\begin{align*}
\left( M(a) \oplus\mathcal{L}_{0}\right)  \cap M(\theta)& = M(a) \cap M(\theta) + \C \theta\\
&= \theta M(a) + \theta \mathcal{L}_{0} = \theta\left( M(a)\oplus\mathcal{L}_{0}\right).
\end{align*}

If $n \geq 2$, we have the following cases.

$\mathbf{Case 1}$. $\theta\in\left( (z-1)^{i-1}H^{2} \dotplus\mathcal{P}_{i-2}\right)  \setminus\left( (z-1)^{i} H^{2} \dotplus\mathcal{P}_{i-1}\right) , 2 \leq i \leq n$.

Suppose $\theta = (z-1)^{i-1} g +p, g \in H^2, p \in \mathcal{P}_{i-2}$. By the same argument as in Lemma \ref{thetadelta}, we have $(z-1)^k\theta  \not\in H(b), 0 \leq k \leq n-i$, and
\begin{align*}
\theta\mathcal{P}_{n-1} \cap H(b) &  = \text{\text{Span}}\{(z-1)^{n-i+1}\theta, (z-1)^{n-j+2}\theta, \ldots, (z-1)^{n-1} \theta\}\\
&  = \theta\mathcal{L}_{n-i+1}.
\end{align*}
Now we show
$$\left( M(a) \oplus \mathcal{L}_{k}\right)  \cap\left( \theta\mathcal{P}_{n-1}\right)=\theta \mathcal{L}_{k}, \quad n-i+1 \leq k \leq n-1. $$
Note that
\begin{align*}
\left( M(a) \oplus \mathcal{L}_{k}\right)  \cap\left( \theta\mathcal{P}_{n-1}\right)& = \left( M(a) \oplus \mathcal{L}_{k}\right)  \cap\left( \theta\mathcal{P}_{n-1}\right) \cap H(b)\\
& = \left( M(a) \oplus \mathcal{L}_{k}\right)  \cap \theta\mathcal{L}_{n-i+1}.
\end{align*}
It is not hard to see that $\theta \mathcal{L}_k \subseteq \left( M(a) \oplus \mathcal{L}_{k}\right), n-i+1 \leq k \leq n-1$. Since $\theta \in D(\delta_1)$, we have $\theta(1) \neq 0$. Then in the expression of $\theta = (z-1)^{i-1} g +p, g \in H^2, p \in \mathcal{P}_{i-2}$, we have $p(1) \neq 0$. It follows that if $0 \leq l < k$, then $(z-1)^l\theta \not \in \left( M(a) \oplus \mathcal{L}_{k}\right)$. Thus arguing as in Lemma \ref{thetadelta}, we conclude that
\begin{align*}
\left( M(a) \oplus \mathcal{L}_{k}\right)  \cap\left( \theta\mathcal{P}_{n-1}\right)& = \left( M(a) \oplus \mathcal{L}_{k}\right)  \cap \theta\mathcal{L}_{n-i+1}\\
& = \theta \mathcal{L}_{k}, \quad n-i+1 \leq k \leq n-1.
\end{align*}
Lemma \ref{intersection} then implies
\begin{align*}
\left( M(a) \oplus\mathcal{L}_{k}\right)  \cap M(\theta)& = M(a) \cap M(\theta) + \theta \mathcal{L}_{k}\\
&= \theta\left( M(a)\oplus\mathcal{L}_{k}\right), \quad n-i+1 \leq k \leq n-1.
\end{align*}

$\mathbf{Case 2}$. $\theta\in H(b)$. In this case $\theta$ is a multiplier of $H(b)$. By using the argument in Case 1, we have
$\theta \mathcal{L}_{k} \subseteq M(a) \oplus\mathcal{L}_{k}, 0 \leq k \leq n-1$,
and for $k \geq 1$, $(z-1)^i \theta \not \in M(a) \oplus\mathcal{L}_{k}, 0 \leq i < k$.
Hence
$$\left( M(a) \oplus \mathcal{L}_{k}\right)  \cap\left( \theta\mathcal{P}_{n-1}\right)=\theta \mathcal{L}_{k}, \quad 0 \leq k \leq n-1,$$
and
\begin{align*}
\left( M(a) \oplus\mathcal{L}_{k}\right)  \cap M(\theta) = \theta\left( M(a)\oplus\mathcal{L}_{k}\right), \quad 0 \leq k \leq n-1.
\end{align*}
The proof is complete.
\end{proof}
Combining Theorem \ref{invsubsftni} and Lemma \ref{invssfis2}, we obtain the following similar result as in \cite[Theorem 7]{Sa86}.
\begin{theorem}\label{invsubsftninew}
Let $a, b$ be defined as in the above lemma. Then the $T$-invariant subspaces of $H(b)$ are $\{0\}$, $M(a) \cap M(\theta)$ and $\left( M(a) \oplus\mathcal{L}_{j}\right)  \cap M(\theta)$, $0 \leq j \leq n-1,$ with $\theta$ an inner function.
\end{theorem}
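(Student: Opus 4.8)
The plan is to combine the two preceding results. By Theorem~\ref{invsubsftni}, every nonzero $T$-invariant subspace of $H(b)$ is either $M(a)\cap M(\theta)$ for some inner $\theta$, or of the form $\theta\bigl(M(a)\oplus\mathcal{L}_j\bigr)$ for some $0\le j\le n-1$ and some inner $\theta$ with $\theta\bigl(M(a)\oplus\mathcal{L}_j\bigr)\subseteq H(b)$. The first family is already in the asserted list. So the entire content of Theorem~\ref{invsubsftninew} is to show that a subspace of the second type can be rewritten as $\bigl(M(a)\oplus\mathcal{L}_j\bigr)\cap M(\theta)$, and conversely that every $\bigl(M(a)\oplus\mathcal{L}_j\bigr)\cap M(\theta)$ that is nonzero already appears among the subspaces produced by Theorem~\ref{invsubsftni}.

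For the forward direction I would simply invoke Lemma~\ref{invssfis2} with $k=j$: the hypothesis $\theta\bigl(M(a)\oplus\mathcal{L}_j\bigr)\subseteq H(b)$ is exactly what that lemma assumes, and its conclusion at $k=j$ gives $\theta\bigl(M(a)\oplus\mathcal{L}_j\bigr)=\bigl(M(a)\oplus\mathcal{L}_j\bigr)\cap M(\theta)$. Thus every subspace in the list of Theorem~\ref{invsubsftni} is of the form named in Theorem~\ref{invsubsftninew}. For the reverse direction, I would start from an arbitrary nonzero $\bigl(M(a)\oplus\mathcal{L}_k\bigr)\cap M(\theta)$ with $\theta$ inner; by Lemma~\ref{invssfis} this is a $T$-invariant subspace, so Theorem~\ref{invsubsftni} already tells us it is one of $\{0\}$, $M(a)\cap M(u)$, or $\theta'\bigl(M(a)\oplus\mathcal{L}_j\bigr)$. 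Hence the two descriptions yield exactly the same collection of subspaces, and the theorem is the assertion that this collection is as displayed.

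The one point needing a little care is the degenerate behaviour in $k$: if $\theta\notin (z-1)H^2\dotplus\mathbb{C}=D(\delta_1)$, then by Lemma~\ref{thetadelta} we have $\bigl(M(a)\oplus\mathcal{L}_k\bigr)\cap M(\theta)=M(a)\cap M(\theta)$ for every $k$, so all the ``higher'' members of the list collapse onto the first family and no new subspaces arise; while if $\theta\in D(\delta_1)$, Lemma~\ref{invssfis2} (via the dichotomy between its Case~1 and Case~2) identifies the smallest $j$ for which $\theta\bigl(M(a)\oplus\mathcal{L}_j\bigr)\subseteq H(b)$ and shows that for $k\ge j$ the intersection equals $\theta\bigl(M(a)\oplus\mathcal{L}_k\bigr)$, matching Theorem~\ref{invsubsftni}, whereas for $k<j$ the intersection again reduces to $M(a)\cap M(\theta)$ by the argument of Lemma~\ref{thetadelta}. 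Assembling these cases shows the two lists coincide.

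I expect the main obstacle to be purely bookkeeping: verifying that no subspace is listed twice and that the edge cases (small $n$, $j=0$, and $\theta$ with or without a finite angular derivative at $1$) are all covered, so that the statement is an exact reformulation rather than an over- or under-count. Since Lemmas~\ref{invssfis}, \ref{intersection}, \ref{thetadelta} and~\ref{invssfis2} together with Theorem~\ref{invsubsftni} supply every ingredient, the proof itself is short: it is the observation that $\theta\bigl(M(a)\oplus\mathcal{L}_j\bigr)=\bigl(M(a)\oplus\mathcal{L}_j\bigr)\cap M(\theta)$ whenever the left side lies in $H(b)$, combined with Lemma~\ref{invssfis} to see the right side is always $T$-invariant.
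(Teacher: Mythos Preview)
Your proposal is correct and follows exactly the same approach as the paper: the paper's proof is literally the one-line remark ``Combining Theorem~\ref{invsubsftni} and Lemma~\ref{invssfis2}, we obtain the following similar result,'' and you have spelled out precisely that combination, together with Lemma~\ref{invssfis} for the converse inclusion. Your additional case analysis via Lemma~\ref{thetadelta} is more than the paper bothers to write, but it is harmless and clarifies why the list in Theorem~\ref{invsubsftninew} is not an overcount.
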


\

For $f \in H(b)$, Let $[f]_T$ be the closure of the polynomial multiples of $f$. In the following, we write $[f]$ for $[f]_T$. If $[f] = H(b)$, then $f$ is called a cyclic function. We prove the following version of Theorem \ref{invsinggtn1}.
\begin{theorem}\label{invsinggtn}
Let $T = (M_z,H(b))$ be a strict $2n$-isometry on $H(b)$. Suppose the pythagorean mate $a$ of $b$ has a single zero of multiplicity $n$ at $\overline{\lambda} \in \T$. Then every nonzero $T$-invariant subspace is of the form $[(z-\overline{\lambda})^{j}\theta]$, where $j \in \{0,1,\ldots,n\}$, $\theta$ an inner function are such that $(z-\overline{\lambda})^{j}\theta \in H(b)$.
\end{theorem}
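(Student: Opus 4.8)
The plan is to combine the classification of the $T$-invariant subspaces in Theorem \ref{invsubsftni} with an identification of each subspace occurring there as a cyclic subspace $[(z-\overline{\lambda})^{j}\theta]_{T}$. By the reductions made just before Lemma \ref{multohb} we may assume $\overline{\lambda}=1$, so that $a(z)=(1-z)^{n}/2^{n}$; then $M(a)=aH^{2}$ coincides, as a set of holomorphic functions, with $(z-1)^{n}H^{2}$, and $T_{a}\colon H^{2}\to M(a)$, $T_{a}f=af$, is unitary once $M(a)$ is given the $H(b)$-norm. Writing $\mathcal{P}$ for the polynomials, Theorem \ref{invsubsftni} tells us that every nonzero $T$-invariant subspace is either $M(a)\cap M(\theta)$ for some inner $\theta$, or $\theta(M(a)\oplus\mathcal{L}_{j})$ for some $0\le j\le n-1$ and some inner $\theta$ with $\theta(M(a)\oplus\mathcal{L}_{j})\subseteq H(b)$, so it is enough to treat these two families.

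First I would establish the base identities $[(z-1)^{j}]_{T}=M(a)\oplus\mathcal{L}_{j}$ for $0\le j\le n-1$ and $[(z-1)^{n}]_{T}=M(a)$ (with the convention $\mathcal{L}_{n}=\{0\}$). Division with remainder by $(z-1)^{n-j}$ gives $(z-1)^{j}\mathcal{P}=\mathcal{L}_{j}\dotplus(z-1)^{n}\mathcal{P}$; as $\mathcal{L}_{j}$ is finite-dimensional, $[(z-1)^{j}]_{T}=\overline{(z-1)^{j}\mathcal{P}}^{H(b)}=\mathcal{L}_{j}+\overline{(z-1)^{n}\mathcal{P}}^{H(b)}$, and since the polynomial multiples of $(z-1)^{n}$ form the subspace $a\mathcal{P}=T_{a}\mathcal{P}$ (the scalar $(-1)^{n}2^{n}$ is absorbed into the polynomial), their $H(b)$-closure is $T_{a}(\overline{\mathcal{P}}^{H^{2}})=T_{a}H^{2}=M(a)$ by the unitarity of $T_{a}$. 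The same reasoning disposes of the first family: since $a$ is outer, $M(a)\cap M(\theta)=a\theta H^{2}$, while the polynomial multiples of $(z-1)^{n}\theta$ form $a\theta\mathcal{P}=T_{a}(\theta\mathcal{P})$, whose closure is $T_{a}(\overline{\theta\mathcal{P}}^{H^{2}})=T_{a}(\theta H^{2})=a\theta H^{2}$. Hence $M(a)\cap M(\theta)=[(z-1)^{n}\theta]_{T}$, which is the stated form with $j=n$, and $(z-1)^{n}\theta\in M(a)\subseteq H(b)$.

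The main step is to prove $\theta(M(a)\oplus\mathcal{L}_{j})=[(z-1)^{j}\theta]_{T}$ whenever $0\le j\le n-1$, $\theta$ is inner and $\theta(M(a)\oplus\mathcal{L}_{j})\subseteq H(b)$. Since $(z-1)^{j}\in\mathcal{L}_{j}$, the function $(z-1)^{j}\theta$ lies in $\theta(M(a)\oplus\mathcal{L}_{j})$, which is a \emph{closed} $T$-invariant subspace of $H(b)$ by the discussion preceding Theorem \ref{invsubsftni}; hence $[(z-1)^{j}\theta]_{T}\subseteq\theta(M(a)\oplus\mathcal{L}_{j})$. For the reverse inclusion the obstacle is that $\theta$ need not be a bounded multiplier of $H(b)$, so one cannot directly push the approximation $\overline{(z-1)^{j}\mathcal{P}}^{H(b)}=M(a)\oplus\mathcal{L}_{j}$ through multiplication by $\theta$. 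I would circumvent this via the open mapping theorem: $T_{\overline{\theta}}$ is a contraction on $H(b)$, and since $T_{\overline{\theta}}(\theta g)=g$ for $g\in M(a)\oplus\mathcal{L}_{j}\subseteq H^{2}$, it maps the closed subspace $\theta(M(a)\oplus\mathcal{L}_{j})$ bijectively onto the closed subspace $M(a)\oplus\mathcal{L}_{j}$; therefore it is a homeomorphism between them, so multiplication by $\theta$ is a bounded linear map from $M(a)\oplus\mathcal{L}_{j}=[(z-1)^{j}]_{T}$ into $H(b)$. Applying this bounded map to the dense subset $(z-1)^{j}\mathcal{P}$ of $[(z-1)^{j}]_{T}$ gives $\theta(M(a)\oplus\mathcal{L}_{j})\subseteq\overline{(z-1)^{j}\theta\mathcal{P}}^{H(b)}=[(z-1)^{j}\theta]_{T}$, yielding equality; here $j\in\{0,\dots,n-1\}$ and $(z-1)^{j}\theta\in\theta(M(a)\oplus\mathcal{L}_{j})\subseteq H(b)$.

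Putting the two families together, every nonzero $T$-invariant subspace has the form $[(z-1)^{j}\theta]_{T}$ with $\theta$ inner, $j\in\{0,1,\dots,n\}$ and $(z-1)^{j}\theta\in H(b)$, which, after undoing the normalization $\overline{\lambda}=1$, is Theorem \ref{invsinggtn}. The step I expect to demand the most care is the open mapping argument: one must verify that $\theta(M(a)\oplus\mathcal{L}_{j})$ is genuinely closed (this uses the contractivity of $T_{\overline{\theta}}$ together with the hypothesis $\theta(M(a)\oplus\mathcal{L}_{j})\subseteq H(b)$, as in the remark before Theorem \ref{invsubsftni}) and that $T_{\overline{\theta}}$ restricted there is injective with image all of $M(a)\oplus\mathcal{L}_{j}$; the remaining verifications are routine given the decomposition $H(b)=M(a)\oplus\mathcal{P}_{n-1}$, the unitarity of $T_{a}$, and the finite-dimensionality of the $\mathcal{L}_{j}$.
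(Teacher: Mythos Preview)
Your proof is correct, but the route you take for the second family of subspaces is more elaborate than the paper's. The paper's argument hinges on Lemma~\ref{multohb}: since $(z-\overline{\lambda})^{j}\theta\in H(b)\cap H^{\infty}$, it is a multiplier of the \emph{whole} space $H(b)$, so $M_{(z-\overline{\lambda})^{j}\theta}$ is bounded on $H(b)$ and one can pass the $H(b)$-density of polynomials directly through it; together with the elementary identity $(z-\overline{\lambda})^{j}H(b)=M(a)\oplus\mathcal{L}_{j}$ (which follows from $H(b)=M(a)\oplus\mathcal{P}_{n-1}$) this gives $\mathcal{N}=[(z-\overline{\lambda})^{j}\theta]$ in one line. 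You instead avoid Lemma~\ref{multohb} and manufacture the needed boundedness by the open mapping theorem, showing only that multiplication by $\theta$ is continuous from the subspace $M(a)\oplus\mathcal{L}_{j}$ into $H(b)$, and then feed in your separately established base identity $[(z-1)^{j}]_{T}=M(a)\oplus\mathcal{L}_{j}$. This works, and has the mild advantage of not relying on the multiplier characterization, but it is longer and your stated concern that ``$\theta$ need not be a bounded multiplier of $H(b)$'' is in fact sidestepped more cheaply by the paper's observation that $(z-\overline{\lambda})^{j}\theta$ (rather than $\theta$ alone) always is one.
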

\begin{proof}
Let $a(z) = \frac{(1- \lambda z)^{n}}{2^{n}}$ and $b_1$ the pythagorean mate of $a$, and $T_1 = (M_z,H(b_1))$. By the discussion before Lemma \ref{multohb}, we have the $T$-invariant subspaces of $H(b)$ are just the $T_1$-invariant subspaces of $H(b_1)$.

Let $\mathcal{N}$ be a nonzero $T$-invariant subspace of $H(b)$. Theorem \ref{invsubsftni} then implies that $\mathcal{N} = M(a) \cap M(\theta)$, or $\mathcal{N} = \theta\left(M(a) \dotplus\mathcal{L}_{j}\right) $ for some $0 \leq j \leq n-1$ and $\theta$ an inner function. If $\mathcal{N} = M(a) \cap M(\theta)$, then
$$\mathcal{N} = a\theta H^2 = [(z-\overline{\lambda})^{n}\theta].$$
If $\mathcal{N} = \theta\left(M(a) \dotplus\mathcal{L}_{j}\right) $ for some $0 \leq j \leq n-1$, then $\theta \mathcal{L}_{j} \in H(b)$. Since
\[
\mathcal{L}_{j} = \text{\text{Span}}\{(z-\overline{\lambda})^{j}, (z-\overline{\lambda})^{j+1}, \ldots, (z-\overline{\lambda})^{n-1}%
\},\quad0 \leq j \leq n-1,
\]
we have $(z-\overline{\lambda})^{j} \theta \in H(b)$. So $(z-\overline{\lambda})^{j} \theta \in \text{Mult}(H(b))$ and $\mathcal{N} = [(z-\overline{\lambda})^{j} \theta]$.
\end{proof}

When $b = \frac{z+1}{2}$, the following result was obtained in \cite{FMS15}.
\begin{corollary}\label{cyclicityhb}
Let $T=(M_z,H(b))$ be a strict $2n$-isometry on $H(b)$. Suppose the pythagorean mate $a$ of $b$ has a single zero of multiplicity $n$ at $\overline{\lambda} \in \T$. Let $f \in H(b)$, then $f$ is cyclic if and only if $f$ is outer and $f(\overline{\lambda}) \neq 0$.
\end{corollary}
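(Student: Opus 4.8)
The plan is to derive Corollary~\ref{cyclicityhb} directly from the invariant-subspace classification of Theorem~\ref{invsinggtn}. As in the proof of that theorem, after replacing $T$ by $\overline{\lambda}T$ I would assume $\overline{\lambda}=1$, so $a(z)=(1-z)^{n}/2^{n}$ and $H(b)=(z-1)^{n}H^{2}\dotplus\HP_{n-1}$; I will also use that $H(b)\hookrightarrow H^{2}$ contractively and that evaluation at $1$ is a bounded functional on $H(b)$ (since $u_{1}^{0}\in H(b)$ with $\langle h,u_{1}^{0}\rangle=h(1)$).

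For the implication ``cyclic $\Rightarrow$ outer and nonvanishing at $1$'' I would argue contrapositively, trapping $[f]$ inside a proper $T$-invariant subspace. If $f$ has a nonconstant inner factor $\theta_{f}$, then $pf\in\theta_{f}H^{2}$ for every polynomial $p$; since $\theta_{f}H^{2}$ is $H^{2}$-closed and $H(b)\hookrightarrow H^{2}$ continuously, the set $\{h\in H(b):\theta_{f}\mid h\}$ is closed in $H(b)$, hence contains $[f]$, yet it omits the constant $1$, so $[f]\neq H(b)$. If $f(1)=0$, then $(pf)(1)=0$ for every polynomial $p$, so $[f]\subseteq\{h\in H(b):h(1)=0\}\subsetneq H(b)$. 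Either way $f$ is not cyclic.

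For the converse, I would take $f$ outer with $f(1)\neq 0$, so that by Theorem~\ref{invsinggtn} one has $[f]=[(z-1)^{j}\theta]$ for some $j\in\{0,\dots,n\}$ and inner $\theta$ with $(z-1)^{j}\theta\in H(b)$, and then rule out every case except $j=0$, $\theta=1$. The one elementary fact I need is that $(1-z)^{m}\psi$ has nontangential limit $0$ at $1$ for every $\psi\in H^{2}$ and $m\geq 1$ (since $\psi(z)=o((1-|z|)^{-1/2})$ nontangentially). If $j=n$, then $[f]=M(a)\cap M(\theta)=a\theta H^{2}$, so $f=\tfrac{1}{2^{n}}(1-z)^{n}(\theta h)$ with $\theta h\in H^{2}$, forcing $f(1)=0$. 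If $1\leq j\leq n-1$, then $[f]=\theta(M(a)\oplus\mathcal{L}_{j})$, so $f=\theta(ag+q)$ with $g\in H^{2}$, $q\in\mathcal{L}_{j}$; here $\theta ag=\tfrac{1}{2^{n}}(1-z)^{n}(\theta g)$ has limit $0$ at $1$, and writing $q=(z-1)^{j}\widetilde q$ with $\widetilde q$ a polynomial, $\theta q=(z-1)^{j}(\theta\widetilde q)$ also has limit $0$ at $1$ (as $\theta\widetilde q\in H^{\infty}$ and $j\geq 1$); again $f(1)=0$. Both cases contradict $f(1)\neq 0$, so $j=0$, whence $[f]=[\theta]=\theta H(b)\subseteq M(\theta)$, so $\theta\mid f$, and outerness of $f$ forces $\theta$ constant, i.e. $[f]=[1]$. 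Finally, applying the same case analysis to the nonzero $T$-invariant subspace $H(b)$ itself (which by Theorem~\ref{invsinggtn} equals $[(z-1)^{j_{0}}\theta_{0}]$, and must have $j_{0}=0$, $\theta_{0}=1$ since $1\in H(b)$ is outer with $1(1)\neq 0$), I get $[1]=H(b)$, so $f$ is cyclic.

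I expect the only genuine work to be the bookkeeping in the converse: verifying, in each of the cases $1\leq j\leq n$, that every function in $\theta(M(a)\oplus\mathcal{L}_{j})$ (respectively in $a\theta H^{2}$) must vanish at the boundary point $1$. This rests entirely on the splitting $H(b)=(z-1)^{n}H^{2}\dotplus\HP_{n-1}$ and the boundary decay of $H^{2}$ functions; no estimate beyond that is needed, and the remaining steps are immediate from Theorem~\ref{invsinggtn} together with the basic facts about $H(b)$ recalled above.
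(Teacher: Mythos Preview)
Your proposal is correct and follows the same approach as the paper: both directions are deduced from Theorem~\ref{invsinggtn}, with necessity coming from the continuous inclusion $H(b)\hookrightarrow H^2$ and the boundedness of evaluation at $\overline{\lambda}$, and sufficiency from identifying $[f]=[(z-\overline{\lambda})^j\theta]$ and then forcing $j=0$, $\theta=1$. The paper's proof compresses the sufficiency into the single line ``So $\theta=1$, $j=0$,'' whereas you spell out the case analysis (using the nontangential decay $|(1-z)^m\psi(z)|\lesssim (1-|z|)^{m-1/2}$ for $\psi\in H^2$) and separately verify $[1]=H(b)$; this extra detail is sound and simply makes explicit what the paper leaves to the reader.
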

\begin{proof}
The necessity is clear since $H(b) \subseteq H^2$ and the point evaluation at $\overline{\lambda}$ is continuous on $H(b)$. For sufficiency, let $f \in H(b)$, if $f$ is outer and $f(\overline{\lambda}) \neq 0$, then by the above theorem, there are $j \in \{0,1,\cdots,n\}$, $\theta$ inner function such that
$$[f] = [(z-\overline{\lambda})^{j} \theta].$$
So $\theta = 1, j =0$, and $[f] = [1] = H(b)$. The proof is complete.
\end{proof}

\subsection{The case of the rational functions}
Let $b$ be a rational function of degree $n$ that is a nonextreme point of the unit ball of $H^\infty$. By Fej\'{e}r-Riesz theorem (\cite{RN90}), there are $\lambda_1, \cdots, \lambda_n \in \overline{\D}$ (may not be distinct) such that the numerator of the Pythagorean mate $a$ of $b$ is a constant multiple of $\prod_{i=1}^n (1-\lambda_i z)$. Without loss of generality, suppose $\lambda_i \in \T$ for $1\leq i \leq m, 0 \leq m \leq n$. Then
$$H(b) = \prod_{i=1}^m (z-\overline{\lambda_i})H^2 \dotplus \mathcal{P}_{m-1}.$$
If there is no $\lambda_i \in \T$, then $m = 0$ and $H(b) = H^2$, see \cite[Lemma 4.3]{CR13}. Let $a_1(z) = \frac{1}{2^m}\prod_{i=1}^m (1-\lambda_iz)$, then as discussed in the above section 3.1, $H(b)= M(\overline{a_1})$ and the $T$-invariant subspaces of $H(b)$ are just the forward shift invariant subspaces of $M(\overline{a_1})$. Using the arguments in section 2.1, similarly, one can characterize the $T$-invariant subspaces of $H(b)$. Thus the proof of the following theorem is similar as in section 2.1, we omit the details.
\begin{theorem}\label{invrational}
Let $b$ be a rational function of degree $n$ that is a nonextreme point of the unit ball of $H^\infty$. Then there is $a(z) = \frac{1}{2^m}\prod_{i=1}^m (z-\lambda_i)$ such that $H(b) = M(\overline{a}), \lambda_i \in \T, 1 \leq i \leq m, 0 \leq m \leq n$. When $m = 0$, $H(b) = H^2$. So every nonzero $T$-invariant subspace of $H(b)$ is of the form $\left[\prod_{j=1}^k(z-\lambda_{i_{j}})\theta\right]$, where $i_{j} \in \{1,2,\cdots,m\}$ distinct, $0\leq k \leq m, j = 1, 2, \cdots, k$, $\theta$ inner function are such that $\prod_{j=1}^k(z-\lambda_{i_{j}})\theta \in H(b)$. When $k = 0$, $\left[\prod_{j=1}^k(z-\lambda_{i_{j}})\theta\right]$ is interpreted as $[\theta]$.
\end{theorem}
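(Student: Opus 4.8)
The plan is to reduce Theorem \ref{invrational} to the ``single boundary point'' case already treated in Section 3.1, exactly mirroring how Theorem \ref{invsinggtn} was obtained from Theorem \ref{invsubsftni}. First I would invoke the Fej\'er-Riesz factorization of $1-|b|^2$ on $\T$ to write the numerator of the pythagorean mate $a$ of $b$ as $c\prod_{i=1}^n(1-\lambda_i z)$ with $\lambda_i\in\overline{\D}$, separate out those zeros on $\T$ as $\lambda_1,\dots,\lambda_m$, and set $a_1(z)=2^{-m}\prod_{i=1}^m(1-\lambda_i z)$. Since the quotient $a/a_1$ and its reciprocal are bounded on $\T$ (the remaining factors $1-\lambda_i z$ with $|\lambda_i|<1$ being bounded above and below there), the results of \cite{Sa86,FHR16} give $H(b)=M(\overline a)=M(\overline{a_1})$ with equivalent norms, hence the same forward-shift invariant subspaces. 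If $m=0$ there are no boundary zeros, $H(b)=H^2$ by \cite[Lemma 4.3]{CR13}, and Beurling's theorem finishes the case; so assume $m\ge 1$.

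Next I would carry over verbatim the decomposition machinery of Section 3.1, but now with several distinct boundary points $\overline{\lambda_1},\dots,\overline{\lambda_m}$ instead of a single point repeated $n$ times. One has $H(b)=\prod_{i=1}^m(z-\overline{\lambda_i})H^2\dotplus\mathcal P_{m-1}$, and the finite-dimensional complement $\mathcal P_{m-1}$ splits as a direct sum of the ``local'' pieces at each $\overline{\lambda_i}$ (each contributing one dimension, since each boundary zero is simple here). The analogues of $\mathcal L_j$ are the subspaces indexed by subsets of $\{1,\dots,m\}$: for a subset $E\subseteq\{1,\dots,m\}$ let $\mathcal L_E$ be spanned by those polynomials in $\mathcal P_{m-1}$ that vanish at every $\overline{\lambda_i}$ with $i\notin E$, equivalently $\prod_{i\in E^c}(z-\overline{\lambda_i})\mathcal P_{|E|-1}$. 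Just as in Lemma \ref{invssfis}, the relation $T^*u_{\lambda}^0=\overline\lambda u_{\lambda}^0$ at each boundary reproducing kernel shows $M(a_1)\oplus\mathcal L_E$ is $T$-invariant, and these together with the Beurling-type subspaces $M(a_1)\cap M(\theta)$ exhaust all invariant subspaces: the three-case argument in the proof of Theorem \ref{invsubsftni} (contained in $M(a_1)$; trivial common inner divisor; nontrivial common inner divisor $\theta$, passing to $T_{\overline\theta}\mathcal N$) goes through unchanged, using that $M(a_1)$ is closed of finite codimension. Finally, translating $M(a_1)\cap M(\theta)=a_1\theta H^2=[\prod_{i=1}^m(z-\overline{\lambda_i})\theta]$ and $\theta(M(a_1)\oplus\mathcal L_E)=[\prod_{i\in E^c}(z-\overline{\lambda_i})\theta]$ (using that the relevant product of boundary factors times $\theta$ lies in $H(b)\cap H^\infty=\text{Mult}(H(b))$ by the analogue of Lemma \ref{multohb}) gives precisely the stated form with $k=|E^c|$ and $\{i_1,\dots,i_k\}=E^c$.

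The main obstacle is bookkeeping rather than any genuinely new idea: one must check that the local analysis at a single boundary point, which in Section 3.1 exploited the specific nested structure $\mathcal L_0\supset\mathcal L_1\supset\cdots$ coming from a single $n$-fold zero, correctly generalizes to a partially ordered family indexed by subsets when the boundary zeros are spread over $m$ distinct points. The key points to verify are that $\theta\,\mathcal P_{m-1}\cap H(b)$ decomposes cleanly according to which boundary points $\theta$ already ``absorbs'' a unit of vanishing at (the analogue of Lemma \ref{thetadelta} and Lemma \ref{invssfis2}, where $\theta\in D(\delta_{\overline{\lambda_i}})$ iff $\theta$ has a finite angular derivative at $\overline{\lambda_i}$), and that no invariant subspace strictly between $M(a_1)$ and $H(b)$ arises other than the $M(a_1)\oplus\mathcal L_E$ — this is immediate since $H(b)/M(a_1)\cong\mathcal P_{m-1}$ is a finite-dimensional module on which $T$ acts with distinct eigenvalues $\overline{\lambda_1},\dots,\overline{\lambda_m}$, so its submodules are exactly the coordinate subspaces. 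Since all of this is the routine extension promised in the text, I would state the result, indicate these parallels, and omit the detailed repetition.
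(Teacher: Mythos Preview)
Your approach is exactly what the paper intends: it says the proof is ``similar'' to the single-point case and omits all details, and your outline---reduce to $M(\overline{a_1})$ via Fej\'er--Riesz and bounded quotients, run the three-case argument of Theorem~\ref{invsubsftni}, then translate to cyclic form via the multiplier lemma---is precisely that adaptation.

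One point needs tightening. You assume throughout that the boundary zeros $\overline{\lambda_1},\dots,\overline{\lambda_m}$ are distinct (``each contributing one dimension, since each boundary zero is simple here''; ``$T$ acts with distinct eigenvalues \dots\ so its submodules are exactly the coordinate subspaces''). The paper's setup explicitly allows the $\lambda_i$ to repeat---they are listed with multiplicity---so in general the induced action of $T$ on $H(b)/M(a_1)\cong\mathcal P_{m-1}$ has a Jordan block of size $n_\mu$ at each distinct boundary value $\mu$, not a diagonal action. The invariant subspaces of the quotient are then products of the nested flags inside each block, i.e.\ indexed by tuples $(j_\mu)_\mu$ with $0\le j_\mu\le n_\mu$; in the theorem's language this is exactly a choice of a subset of the \emph{index set} $\{1,\dots,m\}$ rather than of the distinct values. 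Since Section~3.1 already handles a single block of arbitrary size and your sketch handles several simple blocks, the synthesis is routine---but your assertion that the submodules are ``exactly the coordinate subspaces'' is false as written when multiplicities exceed one, and should be replaced by this Jordan-block description.
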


It follows from the above theorem that $f \in H(b)$ is cyclic if and only if $f$ is outer and $f(\lambda_i) \neq 0, 1 \leq i \leq m$.

In \cite{AlemanMalman}, Aleman and Malman studied the vector-valued $H(B)$ spaces. When $H(B)$ is of finite rank and $M_z$-invariant, they proved in \cite[Theorem 1.3]{AlemanMalman} that for $M_z$-invariant subspace $\HM$, $\dim(\HM\ominus M_z\HM) = 1$. Moreover, if $\varphi \in \HM\ominus M_z\HM$ is of norm $1$, then $\HM = [\varphi]_{M_z}$ and there is a Schur function $C$ such that the rank of $H(C)$ does not exceed that of $H(B)$ and $\varphi: H(C) \rightarrow \HM$ is unitary. Theorems \ref{invsinggtn} and \ref{invrational} in this paper provide some additional information of the structure of the $M_z$-invariant subspaces of $H(b)$ when $b$ is a rational function.

\begin{acknowledgements}
S. Luo was supported by NNSFC (12271149, 11701167).
The authors thank Professor Stefan Richter for many helpful conversions. The authors thank the referee for careful reading of the paper and many crucial comments. The argument of showing $\mathcal{R}$ closed in Case 3 of Theorem \ref{invsubsftni} is due to the referee, our original argument is a little complicated.
\end{acknowledgements}

\end{document}